\def \leq {\leqslant}                                                                
\def \geq {\geqslant}                                                                
\def\ind#1{\lower5pt\hbox{$\scriptstyle #1$}}                                        
\def \d {\mathrm{d}}                                                                 
\def \R{\mathbb R}                                                                   
\def\S{\mathbb S}                                                                    
\def\Z{\mathbb Z}                                                                    
\def\N{\mathbb N}                                                                    
\def \C{\mathbb C}																	 
\def\A{\mathcal A}																	 
\def\D{\mathscr D}																	 
\def\M{\mathcal M}																	 
\def \T {\mathbb{T}}                                                                 
\numberwithin{equation}{section}                                                     
\newcommand{\vertiii}[1]{{\left\vert\kern-0.25ex\left\vert\kern-0.25ex\left\vert #1  
    \right\vert\kern-0.25ex\right\vert\kern-0.25ex\right\vert}}                      
\newcommand{\verti}[1]{{\left\vert\kern-0.25ex\left\vert\kern-0.25ex\left\vert #1    
    \right\vert\kern-0.25ex\right\vert\kern-0.25ex\right\vert}}						 
\def\e{\varepsilon}      															    
\def \m {\bm{\varpi}}															        
\def \ho {h^{0}} 																	    
\def \hu {h^{1}}																		
\def \Rs {\mathcal{R}}																	
\def \ra {\Big\rangle}																	
\def \la {\Big\langle}																	
\def \en {\vartheta}																	
\def \vE {\theta}																		
\def \re {\alpha}																		
\def\E{\mathcal{E}} 																	
\def \B{\mathcal{B}}                                         							
\def \Y {\mathbb{Y}}                                         							
\def\W {\mathbb{W}} 
\def \H {\mathcal{H}}                                        							
\def \vb {v_\ast}                                           							
\def \Q{\mathcal{Q}}                                        							
\def \LL {\mathscr{L}_{\re}} 							 							    
\def \G {\mathcal{G}}							 										
\newcommand\blfootnote[1]{%
  \begingroup
  \renewcommand\thefootnote{}\footnote{#1}%
  \addtocounter{footnote}{-1}%
  \endgroup
}
\newtheorem{theo}{Theorem}[section]                          							
\newtheorem{prop}[theo]{Proposition}                         							
\newtheorem{cor}[theo]{Corollary}                            							
\newtheorem{lem}[theo]{Lemma}                                							
\newtheorem{hyp}[theo]{Assumption}                    									
\newtheorem{nb}[theo]{Remark}                                							
\begin{document}

\title[From Boltzmann equation for granular gases to a modified Navier-Stokes-Fourier system]{From Boltzmann equation for granular gases to a modified \\ Navier-Stokes-Fourier system}

\author{Ricardo J. {\sc Alonso}}
\address{$^1$Texas A\&M University at Qatar, Science Department, Education City, Doha, Qatar.}
\address{$^2$Departamento de Matem\'atica, PUC-Rio, Rio de Janeiro, Brasil.} 
 \email{ricardo.alonso@qatar.tamu.edu}

\author{Bertrand {\sc Lods}}

\address{Universit\`{a} degli Studi di Torino \& Collegio Carlo Alberto, Department of Economics and Statistics, Corso Unione Sovietica, 218/bis, 10134 Torino, Italy.}\email{bertrand.lods@unito.it}

\author{Isabelle {\sc Tristani}}
\address{D\'epartement de Math\'ematiques et Applications, \'Ecole Normale Sup\'erieure, CNRS, PSL University, 75005 Paris, France.}\email{isabelle.tristani@ens.fr}

\maketitle
\vspace{-0.8cm}
\begin{abstract}
In this paper, we give an overview of the results established in~\cite{ALT} which provides the first rigorous derivation of hydrodynamic equations from the Boltzmann equation for inelastic hard spheres in 3D. In particular, we obtain a new system of hydrodynamic equations describing granular flows and prove existence of classical solutions to  the aforementioned system.
One of the main issue is to identify the correct relation between the restitution coefficient (which quantifies the rate of energy loss at the microscopic level) and the Knudsen number which allows us to obtain non trivial hydrodynamic behavior. In such a regime, we construct strong solutions to the inelastic Boltzmann equation, near thermal equilibrium whose role is played by the so-called homogeneous cooling state. We prove then the uniform exponential stability with respect to the Knudsen number of such solutions, using a spectral analysis of the linearized problem combined with technical a priori nonlinear estimates. Finally, we prove that such solutions converge, in a specific weak sense, towards some hydrodynamic limit that depends on time and space variables only through macroscopic quantities that satisfy a suitable modification of the incompressible Navier-Stokes-Fourier system.

\vspace{0.3cm}
\noindent \textbf{Keywords}: Inelastic Boltzmann equation; Granular flows; Nearly elastic regime; Long-time asymptotic; Incompressible Navier-Stokes hydrodynamical limit; Knudsen number.

\end{abstract}

\blfootnote{\noindent {\bf Acknowledgements.}    RA gratefully acknowledges the support from O Conselho Nacional de Desenvolvimento Cient\'ifico e Tecnol\'ogico, Bolsa de Produtividade em Pesquisa  - CNPq (303325/2019-4).  BL gratefully acknowledges the financial support from the Italian Ministry of Education, University and Research (MIUR), ``Dipartimenti di Eccellenza'' grant 2018-2022. IT thanks the ANR EFI:  ANR-17-CE40-0030  and the ANR SALVE: ANR-19-CE40-0004 for their support. 
}

\blfootnote{\noindent {\bf Mathematics Subject Classification (2010)}: 76P05 Rarefied gas flows, Boltzmann equation [See also 82B40, 82C40, 82D05]; 76T25 Granular flows [See also 74C99, 74E20]; 47H20 Semigroups of nonlinear operators [See also 37L05, 47J35, 54H15, 58D07], 35Q35 PDEs in connection with fluid mechanics; 35Q30 Navier-Stokes equations [See also 76D05, 76D07, 76N10].}

\hypersetup{linkcolor=red}


\section{Introduction}

In this paper, we report on some recent results obtained in~\cite{ALT} about the problem of deriving rigorously some hydrodynamic limit from the Boltzmann equation for inelastic hard spheres with small inelasticity. Our aim here is to give an account of the main aspects of our work~\cite{ALT} in a shorter -- \emph{reader-friendly} -- version that includes the main results as well as the main ideas and arguments. We shall only sketch the proofs of our results, referring the reader to \cite{ALT} for complete versions and details. 

\subsection{The problem}
\subsubsection*{\textit{\textbf{The kinetic model}}} We consider here the (freely cooling) Boltzmann equation which provides a statistical description of  identical smooth hard spheres suffering binary and \emph{inelastic collisions}:
	\begin{equation}\label{Bol-}
	\partial_{t}F + v\cdot \nabla_{x} F=\Q_{\re}(F,F)
	\end{equation}
supplemented with initial condition $F(0,x,v)=F^{\mathrm{in}}(x,v)$, where $F=F(t,x,v)$ is the density of granular gases having position $x \in \T_\ell^{d}$ and velocity $v \in \R^{d}$ at time $t\geq0.$ We consider here for simplicity the case of {flat torus} 
	\begin{equation}\label{torus}
	\T_{\ell}^{d}=\R^{d}\slash (2\pi\,\ell\,\Z)^{d}
	\end{equation}
for some typical length-scale $\ell >0$. 
The so-called restitution coefficient $\alpha$ belongs to $(0,1]$ and the collision operator $\Q_{\re}$ is defined in weak form as
	\begin{equation}\label{co:weak}
		\begin{split}
		 \int_{\R^{d}} \Q_{\re} (g,f)(v)\, \psi(v)\, \d v  =  \frac{1}{2}  \int_{\R^{2d}} f(v)\,g(v_{\ast})\,|v-v_{\ast}|
		\mathcal{A}_{\re}[\psi](v,v_{\ast})\, \d v_{\ast}\, \d v,
		\end{split}
	\end{equation}
where
	\begin{equation}    \label{coll:psi} 
	\mathcal{A}_{\re}[\psi](v,v_{\ast}) :=
	    \int_{\S^{d-1}}(\psi(v')+\psi(v_{\ast}')-\psi(v)-\psi(v_{\ast}))\, b(\sigma \cdot \bar{q})\, \d{\sigma},
	\end{equation}
and the post-collisional velocities $(v',v_{\ast}')$ are given
by
	\begin{equation}
		\begin{split} \label{co:transf}
		  v'=v+\frac{1+\re}{4}\,(|q|\sigma-q),&
		\qquad  v_{\ast}'=v_{\ast}-\frac{1+\re}{4}\,(|q|\sigma-q),\\
		\text{where} \qquad q=v-v_{\ast},& \qquad \bar{q}=q/|q|.
		\end{split}
	\end{equation}
Here, $\d\sigma$ denotes the Lebesgue measure on $\S^{d-1}$ and the angular part $b=b(\sigma \cdot \bar{q})$ 
of the collision kernel appearing in \eqref{coll:psi} is a non-measurable mapping integrable over $\S^{d-1}$.  There is no loss of generality assuming
	$$
	\int_{\S^{d-1}}b(\sigma \cdot \bar{q})\, \d{\sigma}=1, \qquad \forall \, \bar{q} \in \S^{d-1}.
	$$
Notice that one can also give a strong formulation of the collision operator~$\Q_\alpha$ (see~\cite[Appendix~A]{ALT}). This strong formulation is simpler in the elastic case ($\alpha=1$), we here give it for later use:
	\begin{equation} \label{eq:Q1strong}
	\Q_1(g,f)(v) 
	= \int_{\R^d \times \S^{d-1}} \left( g(v_\ast') f(v') - g(v_\ast) f(v)\right) |v-v_\ast| \,b(\sigma \cdot \bar{q})\, \d{\sigma}\, \d v_\ast.
	\end{equation}
The true definition actually involves pre-collisional velocities and not post-collisional velocities $v'$ and $v'_\ast$ but they match in the elastic case, which explains the formula~\eqref{eq:Q1strong}. 

The fundamental distinction between the classical elastic Boltzmann equation and the associated to granular gases lies in the role of the parameter $\re \in (0,1)$, the coefficient of restitution that we suppose constant.  This coefficient is given by the ratio between the magnitude of the normal component (along the line of separation between the centers of the two spheres at contact) of the relative velocity after and before the collision. 
The case $\re = 1$ corresponds to perfectly elastic collisions  where kinetic energy  is conserved. However, when $\re  < 1$, part of the kinetic energy of the relative motion is lost since
	$$
	|v'|^{2}+|\vb'|^{2}-|v|^{2}-|\vb|^{2}=-\frac{1-\re^{2}}{4}|q|^{2}\,\left(1-\sigma \cdot \bar{q}\right) \leq 0.
	$$
Notice that the microscopic description \eqref{co:transf} preserves the momentum
	$$
	v'+\vb'=v+\vb
	$$
and, taking $\psi=1$ and then $\psi=v$ in \eqref{co:weak} yields the following conservation of macroscopic density and bulk velocity defined 
as	$$
	\bm{R}(t):=\int_{\T^{d}_{\ell} \times \R^{d}}F(t,x,v)\, \d v\, \d x
	\quad \text{and} \quad 
	\bm{U}(t) := \int_{\T^{d}_{\ell} \times \R^{d}}v F(t,x,v)\, \d v\, \d x,
	$$
for some solution $F(t,x,v)$ to~\eqref{Bol-}:
	$$
	\dfrac{\d}{\d t}\bm{R}(t)= \frac{\d}{\d t}\bm{U}(t)=0.
	$$
Consequently, there is no loss of generality in assuming that
	$$
	\bm{R}(t)=\bm{R}(0)=1, \qquad \bm{U}(t)=\bm{U}(0)=0, \qquad \forall \, t \geq0.
	$$
The main contrast between elastic and inelastic gases is that in the latter the \emph{granular temperature},
	$$
	\bm{T}(t):=\frac{1}{|\T^{d}_{\ell}|}\int_{\R^{d}\times \T^{d}_{\ell}}|v|^{2}F(t,x,v)\, \d v\, \d x
	$$
is constantly decreasing
	$$
	\dfrac{\d}{\d t}\bm{T}(t)=-(1-\re^{2})\mathcal{D}_{\re}(F(t),F(t)) \leq 0,
	$$
where $\mathcal{D}_{\re}(\cdot,\cdot)$ denotes the normalised energy dissipation associated to $\Q_{\re}$, see \cite{MiMo2}, given by
	\begin{equation}\label{eq:Dre}
	\mathcal{D}_{\re}(g,g):=
	\frac{\gamma_{b}}{4}\int_{\T^{d}_{\ell}}\frac{\d x}{|\T^{d}_{\ell}|}\int_{\R^d \times \R^d}g(x,v)g(x,\vb)|v-\vb|^{3}\, \d v\, \d \vb, 
	\end{equation}
where $\gamma_b$ is a positive constant depending only on the angular kernel $b$. 

\subsubsection*{\textit{\textbf{The problem of hydrodynamic limits}}}  To capture some hydrodynamic behaviour of the gas,  we need to write the above equation in \emph{nondimensional form} introducing the dimensionless Knudsen number which is proportional to the mean free path between collisions. We then introduce the classical Navier-Stokes rescaling of time and space (see~\cite{BaGoLe1}) to capture the hydrodynamic limit and introduce the particle density
	\begin{equation}\label{eq:Scaling}
	F_{\e}(t,x,v):=F\left(\frac{t}{\e^{2}},\frac{x}{\e},v\right), \qquad t \geq 0.
	\end{equation}
In this case, we choose for simplicity $\ell=\e$ in \eqref{torus} which ensures now that $F_{\e}$ is defined on $\R^{+}\times \T^{d}\times \R^{d}$ with $\T^{d}:=\T_{1}^{d}$. Under such a scaling, $F_{\e}$ satisfies the rescaled Boltzmann equation
	\begin{subequations}
	\begin{equation}\label{Bol-e}
	\e^{2}\partial_{t}F_{\e} + \e\,v\cdot \nabla_{x} F_{\e}
	=\Q_{\re}(F_{\e},F_{\e}) \quad \text{on} \quad \T^{d}\times\R^{d},
	\end{equation}
supplemented with the initial condition
	\begin{equation}\label{eq:init}
	F_{\e}(0,x,v)=F^{\mathrm{in}}_{\e}(x,v):=F^{\mathrm{in}}(\tfrac x\e, v).
	\end{equation}
	\end{subequations}
Conservation of mass and density is preserved under this scaling, if $F_\e$ solves~\eqref{Bol-e}, then
	$$
	\frac{\d}{\d t} \bm{R}_{\e}(t) = \frac{\d}{\d t} \bm{U}_{\e}(t) =0
	$$
where
	$
	\bm{R}_{\e}(t):=\int_{\T^d \times \R^{d}}F_{\e}(t,x,v)\, \d v\, \d x$ and 
	$
	\bm{U}_{\e}(t):=\int_{\T^d \times\R^{d}}F_{\e}(t,x,v)v\, \d v\, \d x, 
	$
whereas the cooling of the granular gas is given by the equation
	\begin{equation}\label{eq:TeHaff}
	\frac{\d}{\d t}\bm{T}_{\e}(t)= -\frac{1-\re^{2}}{\e^{2}}\mathcal{D}_{\re}(F_{\e}(t),F_{\e}(t)),
	\end{equation}
where  $\bm{T}_{\e}(t):=\frac{1}{|\T^{d}|}\int_{\T^d \times\R^{d}}|v|^{2}F_{\e}(t,x,v)\, \d v\, \d x$ and we recall that $\mathcal{D}_\re$ is defined in~\eqref{eq:Dre}.
The conservation properties of the equation imply that there is no loss of generality assuming that
	$$
	\bm{R}_{\e}(t) = 1, \quad \bm{U}_{\e}(t) = 0, \quad \forall \,\e >0,\, t \geq 0. 
	$$

In order to understand the free-cooling inelastic Boltzmann equation~\eqref{Bol-e}-\eqref{eq:init}, we perform a {\em self-similar change of variables}, which allows us to introduce an intermediate asymptotic and ensures that our equation has a non trivial steady state (see~\cite{MiMo1,MiMo2,MiMo3} for more details). After this change of variables, we are led to study the equation
	\begin{equation}\label{BE0}
	\e^{2}\partial_{t} f_{\e}+\e v \cdot \nabla_{x} f_{\e}+ (1-\re)\,\nabla_{v}\cdot (v f_{\e})
	=  \Q_{\re}(f_{\e},f_{\e}),
	\end{equation}
with initial condition 
	$$
	f_{\e}(0,x,v)=F_{\e}^{\mathrm{in}}(x,v).
	$$ 
Note that the drift term acts as an energy supply which prevents the total cooling down of the gas. It has been shown that there exists a \emph{spatially homogeneous} steady state $G_{\re}$ to~\eqref{BE0}. More specifically, there exists $\re_{0} \in (0,1)$ (where $\re_0$ is an explicit threshold value) such that for $\re \in (\re_{0},1)$, there exists a unique distribution $G_{\re}=G_\re(v)$ satisfying
	\begin{equation} \label{def:Gre}
	(1-\re) \nabla_{v}\cdot (v \, G_{\re})=  \Q_{\re}(G_{\re},G_{\re})
	\quad \text{with} \quad 
	\int_{\R^{d}}G_{\re}(v) \begin{pmatrix} 1 \\ v \end{pmatrix} \, \d v=\begin{pmatrix} 1 \\ 0 \end{pmatrix}.
	\end{equation}
Moreover, there exists some constant $C>0$ independent of $\alpha$ such that
	\begin{equation}\label{eq:GretoM}
	\|G_{\re}-\M\|_{L^{1}_v(\langle v \rangle^2)} \leq C (1-\alpha) 
	\end{equation}
where $\M$ is the Maxwellian distribution
	\begin{equation}\label{eq:max}
	\M(v) :=(2\pi\en_{1})^{-d/2}\exp\left(-\frac{|v|^{2}}{2\en_{1}}\right), \qquad v \in \R^{d},
	\end{equation}
for some explicit temperature $\en_{1} >0$.  The Maxwellian distribution $\M$ is a steady solution for $\re=1$ and its prescribed temperature $\en_{1}$ (which ensures \eqref{eq:GretoM} to hold) will play a role in the rest of the analysis. 

 It is important to emphasize that, in all the sequel, all the threshold values on $\e$ and the various constants involved are actually depending \emph{only} on this initial choice. 

In order to reach some incompressible Navier-Stokes type equation in the limit $\e \to 0$, we introduce the following fluctuation $h_{\e}$ around the equilibrium $G_{\re}$:
	$$
	f_{\varepsilon}(t,x,v)=G_{\re}(v)+\varepsilon \,h_{\varepsilon}(t,x,v).
	$$
Our problem boils down to look at the following equation on $h_{\e}$:
	\begin{equation}\label{eq:BE}
	\begin{cases}
	&\partial_{t} h_{\varepsilon}+\dfrac{1}{\e} v \cdot \nabla_{x} h_{\varepsilon}
	=\dfrac{1}{\e^{2}} \LL h_{\varepsilon}
	+\dfrac{1}{\e} \Q_{\re}(h_{\varepsilon},h_{\varepsilon})\\[7pt]
	& h_{\e}(t=0)=h_\e^{\mathrm{\mathrm{in}}}:= \dfrac{1}{\e} (F_{\e}^{\mathrm{in}} - G_\re),
	\end{cases}
	\end{equation}
where $\LL$ is the linearized collision operator (local in the $x$-variable) defined as
	\begin{equation} \label{def:LL}
	\LL h:=\Q_{\re}(G_\re,h)+\Q_{\re}(h,G_\re) - (1-\re)\nabla_{v}\cdot (vh).
	\end{equation}
We also denote by $\mathscr{L}_{1}$ the linearized operator around $G_{1}=\M$, that is,
	\begin{equation} \label{def:LL1}
	\mathscr{L}_{1}h:=\Q_{1}(\M,h)+\Q_{1}(h,\M).
	\end{equation}
From now on, we will always assume that 
	\begin{equation} \label{eq:Fin}
	{\int_{\T^d \times \R^{d}}F^{\mathrm{in}}_{\e}(x,v)\left(\begin{array}{c}1 \\v \\|v|^{2}\end{array}\right)\, \d v \, \d x}
	=\left(\begin{array}{c}1 \\ 0 \\  {E_\e}\end{array}\right) \quad {\text{with} \quad E_\e>0 \quad \text{and} \quad \frac{E_\e - d\vartheta_1}{\e} \xrightarrow[\e \to 0]{}0}. 
	\end{equation}
{The choice of prescribing as initial energy some constant $E_\e>0$ satisfying $\e^{-1} (E_\e-d\vartheta_1) \to 0$ as $\e \to 0$ for our problem is natural because $d\vartheta_1$ is the energy of the Maxwellian $\M$ introduced in~\eqref{eq:max} and as we shall see later on, the restitution coefficient~$\re$ is intended to tend to $1$ as $\e$ goes to $0$ in our analysis (see~\eqref{eq:initialenergy}).} 
It is also worth noticing that assumption~\eqref{eq:Fin} and~\eqref{def:Gre} result in 
	$$
	\int_{\T^d \times \R^{d}}h_\e^{\mathrm{in}}(x,v)  \begin{pmatrix} 1 \\ v \end{pmatrix} \, \d v \, \d x=\begin{pmatrix} 0 \\ 0 \end{pmatrix}.
	$$
Moreover, equation~\eqref{eq:BE} preserves mass and {\em vanishing} momentum since, if $h_\e$ solves~\eqref{eq:BE}, then one formally has 
	\begin{equation} \label{eq:driftmomentum}\begin{split}
	\frac{\d}{\d t} \int_{\T^d \times \R^{d}} h_\e(t,x,v) v \, \d v \, \d x &=\int_{\T^d \times \R^{d}} \nabla_v \cdot (v h_{\e}(t,x,v)) v \,  \d v \, \d x 
	=  - \int_{\T^d \times \R^{d}} h_\e(t,x,v) v \, \d v \, \d x.
	\end{split}	\end{equation}
Consequently, there is no loss of generality assuming that 
	\begin{equation} \label{eq:conservh}
	\int_{\T^d \times \R^{d}}h_\e(t,x,v)  \begin{pmatrix} 1 \\ v \end{pmatrix} \, \d v \, \d x
	=\begin{pmatrix} 0 \\ 0 \end{pmatrix},
	\quad \forall \, t \geq 0. 
	\end{equation}

\subsubsection*{\textbf{\textit{Relation between the restitution coefficient and the Knudsen number}}}
The central underlying assumption in our study is the following relation between the restitution coefficient and the Knudsen number.
\begin{hyp}\label{hyp:re}
The restitution coefficient $\re(\cdot)$ is a continuously decreasing function of the Knudsen number $\e$ satisfying the scaling behaviour
	\begin{equation}\label{eq:scaling}
	\re(\e)=1-\e^{2} (\lambda_0 + \eta(\e))
	\end{equation}
with $\lambda_{0} \geq 0$ and some function $\eta(\cdot)$ that tends to $0$ as $\e$ goes to $0$. If $\lambda_0=0$, we assume furthermore that there exists $\e_\star>0$ such that $\eta(\cdot)$ is positive on $(0, \e_\star)$. 
\end{hyp}
{Notice that under this assumption, the hypothesis made on the energy of the initial data in~\eqref{eq:Fin} implies that 
	\begin{equation} \label{eq:initialenergy}
	\int_{\T^d \times \R^d} h_\e^{\mathrm{in}}(x,v) \,|v|^2 \, \d v \, \d x \xrightarrow[\e \to 0]{} 0. 
	\end{equation}
Indeed, using~\eqref{eq:Fin} and Assumption~\ref{hyp:re} combined with~\eqref{eq:GretoM}, we obtain
	\begin{multline*}
	\int_{\T^d \times \R^d} h_\e^{\mathrm{in}}(x,v) |v|^2\, \d v \, \d x 
	= \frac1\e \int_{\T^d \times \R^d} \left(F_\e^{\mathrm{in}}(x,v)- G_{\alpha(\e)}(v)\right) |v|^2\, \d v \, \d x \\
	= \frac{E_\e - d \vartheta_1}{\e} + \frac1\e \int_{\T^d \times \R^d} \left(\M(v)- G_{\alpha(\e)}(v)\right) |v|^2\, \d v \, \d x
	\xrightarrow[\e \to 0]{} 0. 
	\end{multline*}}

Still under Assumption~\ref{hyp:re}, we formally obtain that if $\e \to 0$ in~\eqref{eq:BE}, then $h_\e \to \bm{h}$ with~$\bm{h} \in \operatorname{Ker} \mathscr{L}_1$ where~$\mathscr{L}_1$ is defined in~\eqref{def:LL1}. We recall that when seeing $\mathscr{L}_1$ as an operator acting only on velocity on the space~$L^2_v(\M^{-1/2})$, then
	$$
	\operatorname{Ker} \mathscr{L}_1 = \operatorname{Span} \{\M, v_1 \M, \cdots v_d \M, |v|^2 \M\}
	$$
and the projection $\bm{\pi}_0$ onto $\operatorname{Ker} \mathscr{L}_1$ is given by 
	\begin{equation}\label{def:pi0}
	\bm{\pi}_{0}(g) : =\sum_{i=1}^{d+2}\left(\int_{ \R^{d} }g\,\Psi_{i}\,\d v \right)\,\Psi_{i}\,\M,
	\end{equation}
where 
	\begin{equation} \label{def:Psii}
	\Psi_{1}(v):=1, \quad \Psi_{i}(v):=\frac{1}{\sqrt{\en_{1}}}v_{i-1}, \quad i=2,\ldots,d+1 
	\quad \text{and} \quad \Psi_{d+2}(v):=\frac{|v|^{2}-d\en_{1}}{\en_{1}\sqrt{2d}}.
	\end{equation}
We deduce formally that $\bm{h}$ takes the following form
	$$
	\bm{h}(t,x,v)=\left(\varrho(t,x)+u(t,x)\cdot v + \frac{1}{2}\vE(t,x)(|v|^{2}-d\en_{1})\right)\M(v)
	$$
with 
	\begin{multline} 
	\varrho(t,x) := \int_{\T^d \times \R^d} \bm{h}(t,x,v) \, \d v, 
	\quad
	u(t,x) := \frac{1}{\vartheta_1} \int_{\T^d \times \R^d} \bm{h}(t,x,v) v \, \d v, \\
	\vE(t,x) :=  \int_{\T^d \times \R^d} \bm{h}(t,x,v) \frac{|v|^2-d\vartheta_1}{\vartheta_1^2d}\, \d v. 
	\end{multline}

It is worth mentioning that a careful spectral analysis of the linearized collision operator~$\LL$ defined in~\eqref{def:LL} shows that unless one assumes $1-\re$ at least of order $\e^{2}$, the eigenfunction associated to the energy dissipation would explode and prevent some exponential stability for~\eqref{eq:BE} to hold true (see Theorem~\ref{theo:linear}). Actually, in our study, we will require $\lambda_0$ to be relatively small with respect to the spectral gap associated to the elastic linearized operator to ensure stability in the inelastic case. If one assumes $\lambda_0=0$ (for example, one could assume $1-\re$ of order~$\e^q$ with $q>2$), the effect of the inelasticity is too weak in the hydrodynamic scale and the expected model is the classical Navier-Stokes-Fourier system. In short, we are left with two cases:
\begin{enumerate}
\item[\underline{Case 1:}] If $\lambda_{0}=0$, the expected model is the classical Navier-Stokes-Fourier system.

\smallskip

\item[\underline{Case 2:}] If $0 < \lambda_{0}< \infty$ is small enough (compared to some explicit quantities), the cumulative effect of inelasticity is visible  in the hydrodynamic scale and we expect a different model to the Navier-Stokes-Fourier system accounting for that.  
\end{enumerate}

In this nearly elastic regime, the energy dissipation rate in the system happens in a controlled fashion since the inelasticity parameter is compensated accordingly to the number of collisions per time unit. 
Other regimes can be considered depending on the rate at which kinetic energy is dissipated; for example, an interesting regime is the \emph{mono-kinetic} one which considers the extreme case of infinite energy dissipation rate.  In this way, the limit is formally described by enforcing a Dirac mass solution in the kinetic equation yielding the \emph{pressureless Euler system} (corresponding to sticky particles).  Such a regime has been rigorously addressed in the one-dimensional framework in the interesting contribution {\cite{jabin}}. It is an open question to extend such analysis to higher dimensions {since the approach of {\cite{jabin}} uses the so-called Bony functional which is a tool specifically tailored for~1D kinetic equations.}
\subsection{Notations and definitions}
Let us introduce some useful notations for functional spaces. For any nonnegative weight function $m\::\:\R^{d}\to \R^{+}$,  we define, for all $p >1$ the space~$L^{p}(m)$ through the norm
	$$
	\|f\|_{L^{p}(m)}:=\left(\int_{\R^{d}}|f(\xi)|^{p}m(\xi)^{p}\, \d\xi\right)^{1/p},
	$$
We also define, for $p \geq 1$
	$$
	\W^{k,p}(m)=\left\{f \in L^{p}(m)\;;\;\partial_{\xi}^{\beta}f \in L^{p}(m) \:\forall \, |\beta| \leq k\right\}
	$$
with the usual norm, i.e., for $k \in \N$:
	$$
	\|f\|_{\W^{k,p}(m)}^{p}=\sum_{|\beta| \leq k}\|\partial_{\xi}^{\beta}f\|_{L^{p}(m)}^{p}.
	$$
For $m \equiv 1$, we simply denote the associated spaces by $L^{p}$ and $\W^{k,p}$. Notice that all the weights we consider here will depend only on velocity, i.e. $m=m(v)$. We will also use the notation $\langle \xi \rangle := \sqrt{1+|\xi|^2}$ for $\xi \in \R^d$. 

On the complex plane, for any $a \in \R$, we set
	\begin{equation} \label{def:Ca}
	\C_{a}:=\{z \in \C\;;\;\mathrm{Re}\,z >-a\}, \qquad \C_{a}^{\star}:=\C_{a}\setminus\{0\}
	\end{equation}
and, for any $r >0$, we set
	$$
	\mathbb{D}(r)=\{z \in \C\;;\;|z| \leq r\}.
	$$

We also introduce the following notion of hypo-dissipativity in a general Banach space $(X,\|\cdot\|)$. A closed (unbounded) linear operator $A\::\:\D(A) \subset X \to X$ is said to be \emph{hypo-dissipative} on $X$ if there exists  a norm, denoted by~$\vertiii{\cdot}$, equivalent to the $\|\cdot\|$--norm such that $A$ is dissipative on the space $(X,\vertiii{\cdot})$, that is, 
	$$
	\vertiii{(\lambda-A)h} \geq \lambda\,\vertiii{h}, \qquad \forall \, \lambda >0,\,\;h \in \D(A).
	$$
Given two Banach spaces $X$ and $Y$, we denote with $\|\cdot\|_{X \to Y}$ the operator norm on the space of $\mathscr{B}(X,Y)$ linear and continuous operators from $X$ to $Y$.

Note also that in what follows, for two positive quantities $A$ and $B$, we denote by~$A \lesssim B$ if there exists a universal positive constant $C$ (which is in particular independent of the parameters $\alpha$ and $\e$) such that~$A \leq CB$.
\subsection{Main results}
The main results are both about the solutions to \eqref{eq:BE}. The first one is the following Cauchy theorem regarding the existence and uniqueness of close-to-equilibrium solutions to \eqref{eq:BE}. 
The functional spaces at stake are ${L^{1}_{v}L^2_x}$-based Sobolev spaces $\E_{1} \hookrightarrow \E$ defined through 
	\begin{equation} \label{def:E-E1}
	\E:= {\W^{k,1}_{v}\W^{m,2}_{x}}(\langle v\rangle^{q}), \quad \E_{1}:={\W^{k,1}_{v}\W^{m,2}_{x}}(\langle v\rangle^{q+1})
	\quad \text{with} \quad
	 {m  > d}, \quad m-1 \geq  k \geq 0, \quad q   \geq 3. 
	\end{equation}

\begin{theo}\label{theo:main-cauchy}
Under Assumption \ref{hyp:re}, for~$\e,\lambda_0$ and $\eta_0$ sufficiently small (with explicit bounds), if~$h^{\rm in}_\e \in \E$ is such that	
	$$
	\|h^{\rm in}_\e\|_{\E} \leq \eta_0,
	$$
then the inelastic Boltzmann equation \eqref{eq:BE} has a unique solution 
	$$
	h_{\e} \in\mathcal{C}\big([0,\infty); \E\big) \cap L^1\big([0,\infty); \E_{1}\big)
	$$ 
satisfying for any $r \in (0,1)$,
	\begin{equation*}
	\left\|h_\e(t)\right\|_{\E}\leq C \,\eta_0\,\exp\left(-(1-r){\lambda}_{\e}\,t\right), 
	\qquad \forall \, t >0
	\end{equation*}
for some positive constant $C=C(r) >0$ independent of $\e$ and where ${\lambda}_{\e}\underset{\e\to 0}{\sim}\lambda_{0}+\eta(\e)$ with~$\lambda_0$ and~$\eta=\eta(\e)$ that have been introduced in Assumption~\ref{hyp:re}.
\end{theo}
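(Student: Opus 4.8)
The plan is to treat \eqref{eq:BE} as a perturbation of its linearized dynamics and to close a fixed-point/continuation argument around the decay estimates supplied by Theorem~\ref{theo:linear}. Write $\mathcal{G}_\e:=\e^{-2}\LL-\e^{-1}v\cdot\nabla_x$ for the full generator and let $(S_\e(t))_{t\geq0}$ be the semigroup it generates on $\E$. By the conservation identities \eqref{eq:conservh} one may restrict to the closed subspace of data with vanishing mass and momentum, on which Theorem~\ref{theo:linear} provides the uniform bound $\|S_\e(t)\|_{\E\to\E}\lesssim e^{-(1-r)\lambda_\e t}$ at the slow (hydrodynamic) rate $\lambda_\e\sim\lambda_0+\eta(\e)$, together with a strong dissipative gain at rate $\sim\e^{-2}$ on the microscopic part $(\mathrm{Id}-\bm{\pi}_0)$. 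The single most important structural fact is that the quadratic source is \emph{purely kinetic}: the mass and momentum moments of $\Q_\re(h,h)$ vanish identically, and its energy moment is only of size $O(1-\re)=O(\e^2)$ in the inelastic regime, so that $\bm{\pi}_0\Q_\re(h,h)$ is negligible. This is what will let us tame the singular prefactor $\e^{-1}$ in front of $\Q_\re$: the nonlinearity is seen by the fast microscopic dissipation rather than by the slow rate $\lambda_\e$.

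The second ingredient is a bilinear estimate in the scale $\E\hookrightarrow\E_1$. Since $\Q_\re$ is local in $x$ and $\W^{m,2}_x(\T^d)$ is a Banach algebra for $m>d/2$ (guaranteed by $m>d$), the space variable is handled by the algebra property and everything reduces to velocity estimates. For inelastic hard spheres the collision frequency grows linearly, $\nu(v)\sim\langle v\rangle$, so the loss and gain terms cost exactly one additional power of the weight; this is precisely the role of the gain space $\E_1$ (weight $\langle v\rangle^{q+1}$ versus $\langle v\rangle^{q}$, with $q\geq3$ also ensuring integrability of the $|v-v_\ast|$ and $|v-v_\ast|^{3}$ factors). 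The target is an estimate of the form
\begin{equation*}
\|\Q_\re(f,g)\|_{\E}\lesssim \|f\|_{\E}\,\|g\|_{\E_1}+\|f\|_{\E_1}\,\|g\|_{\E},
\end{equation*}
valid uniformly as $\re\to1$, which I would obtain from the known $L^1_v$ estimates on the elastic operator $\Q_1$ together with a perturbative bound on $\Q_\re-\Q_1$ of order $1-\re$.

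With these two ingredients, I would first obtain a local-in-time solution by a contraction argument on the Duhamel map $h\mapsto S_\e(\cdot)h^{\rm in}_\e+\e^{-1}\int_0^{\cdot}S_\e(\cdot-s)\Q_\re(h(s),h(s))\,\d s$, and then upgrade it to a global solution with the stated exponential decay through a priori nonlinear estimates and a continuation argument. The decay is read off from a hypocoercive energy functional $\mathcal{H}(h)$ equivalent to $\|\cdot\|_{\E}^2$ whose time derivative satisfies, schematically,
\begin{equation*}
\frac{\d}{\d t}\,\mathcal{H}(h)+\kappa\,\lambda_\e\,\|h\|_{\E}^2+\frac{\sigma}{\e^{2}}\,\|(\mathrm{Id}-\bm{\pi}_0)h\|_{\E_1}^2\lesssim \frac1\e\,\big|\big\langle \Q_\re(h,h),h\big\rangle_{\mathcal{H}}\big|.
\end{equation*}
Using the bilinear estimate and the near-vanishing of $\bm{\pi}_0\Q_\re(h,h)$, the right-hand side is bounded by $\e^{-1}\|h\|_{\E}^2\,\|(\mathrm{Id}-\bm{\pi}_0)h\|_{\E_1}$ up to an $O(\e)$ remainder; Young's inequality then absorbs it into the microscopic dissipation $\sigma\e^{-2}\|(\mathrm{Id}-\bm{\pi}_0)h\|_{\E_1}^2$ at the cost of a quartic term $C\|h\|_{\E}^4$, in which the two powers of $\e$ cancel exactly. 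For $\|h^{\rm in}_\e\|_{\E}\leq\eta_0$ and $\eta_0,\lambda_0,\e$ small, this quartic term is absorbed by the dissipative terms, yielding both the global bound and the rate $e^{-(1-r)\lambda_\e t}$; uniqueness follows by applying the same bilinear bounds to the equation for the difference of two solutions.

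The main obstacle is to make this absorption \emph{uniform in} $\e$, and in particular to control the genuinely macroscopic nonlinear interaction — the one that produces the incompressible Navier-Stokes nonlinearity in the limit — against the possibly very small slow rate $\lambda_\e$. This forces a careful use of the micro-macro decomposition, of the incompressibility constraint, and of the near-conservation structure of $\Q_\re$ (vanishing mass/momentum moments and $O(\e^2)$ energy moment), so that no dangerous term is left measured by $\lambda_\e$ alone. A secondary but pervasive difficulty is to keep every constant — in the splitting $\LL=\A+\B$ with $\B$ hypo-dissipative and $\A$ regularizing, and in the enlargement from the Hilbert space $L^2_v(\M^{-1/2})$ to the $L^1_v$-based space $\E$ — independent of $\re$ and $\e$ as $\re\to1$; this uniformity is what ultimately makes the whole perturbative scheme close.
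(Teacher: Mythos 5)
There is a genuine gap, and it sits in both of your main ingredients. First, the linear input you invoke does not exist: Theorem~\ref{theo:linear} is a purely spectral statement (it localizes $\mathfrak{S}(\G_{\re(\e),\e})\cap\C_\mu$ and identifies the $d+2$ eigenvalues near zero) and yields \emph{no} decay bound on the semigroup $S_\e(t)$ in $\E$. Passing from spectral localization to $\|S_\e(t)\|_{\E\to\E}\lesssim e^{-(1-r)\lambda_\e t}$ would require a spectral mapping theorem uniform in $\e$, and the paper explicitly emphasizes (remark (iii) after Theorem~\ref{theo:linear}) that it \emph{avoids} any spectral mapping theorem, this avoidance being crucial to reach the sharp scaling $1-\re\sim\e^2$. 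So your Duhamel contraction $h\mapsto S_\e(\cdot)h^{\rm in}_\e+\e^{-1}\int_0^\cdot S_\e(\cdot-s)\Q_\re(h,h)\,\d s$ rests on an unproven (and, at this scaling, unavailable) estimate. Second, the hypocoercive functional $\mathcal{H}(h)$ equivalent to $\|h\|_\E^2$ with microscopic dissipation $\e^{-2}\|(\mathrm{Id}-\bm{\pi}_0)h\|_{\E_1}^2$ does not exist in this setting: $\E$ is $L^1$ in velocity, so there is no Hilbert structure and the pairing $\langle\Q_\re(h,h),h\rangle_{\mathcal{H}}$ has no natural meaning; the hypocoercivity estimate \eqref{eq:hypocoerc} is available only in the Gaussian-weighted space $\H=\W^{m,2}_{x,v}(\M^{-1/2})$, in which one cannot pose the inelastic problem because $G_\re$ has fat (non-Gaussian) tails. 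The paper even contains a remark explaining why a single equivalent-norm approach (\`a la~\cite{GMM}) cannot be made uniform in $\e$, since it ignores the different $\e$-behaviors of the microscopic and macroscopic components.

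The paper's actual device, which replaces both missing ingredients, is a splitting of the \emph{equation} rather than of the norm: $h_\e=\ho_\e+\hu_\e$, with $\ho_\e$ solving \eqref{eq:h0} in the large space $\E$ where only the hypodissipativity of $\B_\e$ is needed (decay $e^{-\nu t/\e^2}$ with gain of $\E_1$, see \eqref{eq:dissipE1}), and $\hu_\e$ solving \eqref{eq:h1} with \emph{zero} initial datum in $\H$, where elastic hypocoercivity applies, the coupling going through the regularizing operator $\A_\e\in\mathscr{B}(\E,\H)$. You did correctly identify the key structural facts — the bilinear estimate \eqref{eq:bilinear} and the near-vanishing of the macroscopic moments of $\Q_\re$, in particular $\|\Pi_0\Q_{\re(\e)}(h,h)\|_\E\lesssim\e^2\|h\|_\E^2$ as in \eqref{eq:Pi0Qre} — but two further steps you leave unresolved are where the proof actually closes. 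Since energy is not conserved, $\mathbf{P}_0 h\neq0$, and the energy mode $\Pi_0 h$ must be estimated through its own evolution equation, with $\Pi_0(\G_\e h)=-\overline{\lambda}_\e\,\Pi_0 h$ up to an error of size $\lambda_\e\|(\mathbf{Id}-\mathbf{P}_0)h\|_\E$ (Proposition~\ref{prop:P0h1}); and the resulting Gronwall loop contains the term $\lambda_\e\int_0^t e^{-\overline{\lambda}_\e(t-s)}\|(\mathbf{Id}-\mathbf{P}_0)\hu(s)\|_\H\,\d s$, which cannot be absorbed by a dissipation at the possibly vanishing rate $\lambda_\e$ (recall $\lambda_0=0$ is allowed); the paper resolves this by extracting the small prefactor $(\Delta_0^2+\delta+\lambda_\e)$ in Corollary~\ref{cor:Phi} before applying a non-standard Gronwall argument, and existence is then obtained by a stable iterative scheme on the coupled system \eqref{eq:h0}--\eqref{eq:h1}, not by a fixed point on a Duhamel formula. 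Your closing paragraph names exactly this obstacle, but the mechanism you propose (one functional plus Young's inequality) is the route the paper argues cannot be made uniform in $\e$.
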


\begin{nb}
It is worth pointing out that the close-to-equilibrium solutions we construct are shown to decay with an exponential rate as close as we want to~${\lambda}_{\e} \sim \frac{1-\re(\e)}{\e^{2}}$ (which is the energy eigenvalue of the linearized operator, see Theorem \ref{theo:linear} hereafter). The rate of convergence can thus be made uniform with respect to the Knudsen number~$\e$ (notice that if~$\lambda_0=0$, we obtain a rate of decay as close as we want to~$\eta(\e)$, we thus obtain a uniform bound in time but not a uniform rate of decay).  
\end{nb}

The estimates on the solution $h_{\e}$ provided by Theorem \ref{theo:main-cauchy} are enough to prove that the solution $h_{\e}(t)$ converges towards some hydrodynamic solution $\bm{h}$ which depends on $(t,x)$ only through macroscopic quantities $(\varrho(t,x),u(t,x),\vE(t,x))$ which are solutions to a suitable modification of the incompressible Navier-Stokes system. This is done under an additional assumption on the initial datum that is lightly restrictive. Before stating our main convergence result, we introduce the notation
	$$
	\mathscr{W}_\ell := \left(\W^{\ell,2}_x\left(\T^d\right)\right)^{d+2}, \quad \ell \in \N
	$$ 
and we furthermore assume that in the definition of the functional spaces~\eqref{def:E-E1}, the following conditions are satisfied:
	$$
	m  > d, \quad m-1 \geq  k \geq 1, \quad q   \geq 5.
	$$

\begin{theo}\label{theo:hydro}
We suppose that the assumptions of Theorem~\ref{theo:main-cauchy} are satisfied. We assume furthermore that there exists $(\varrho_{0},u_{0},\vE_{0}) \in \mathscr{W}_m$ such that
	$$
	\lim_{\e\to0}\left\|\bm{\pi}_{0}h_{\mathrm{in}}^{\e}-h_{0}\right\|_{ {L^{1}_{v}\W^{m,2}_{x}(\langle v \rangle^q)}}=0,
	$$
where we recall that $\bm{\pi}_0$ is the projection onto the kernel of $\mathscr{L}_1$ defined in~\eqref{def:pi0} and
	\begin{equation} \label{def:h0}
	h_{0}(x,v):=\left(\varrho_{0}(x)+u_{0}(x)\cdot v + \frac{1}{2}\vE_{0}(x)(|v|^{2}-d\en_{1})\right)\M(v).
	\end{equation}
Then, for any $T >0$, the family of solutions $\left\{ h_{\e} \right\}_{\e}$ constructed in Theorem~\ref{theo:main-cauchy} converges in some weak sense to a limit $\bm{h}=\bm{h}(t,x,v)$ which is such that 
	\begin{equation}\label{eq:hlimint}
	\bm{h}(t,x,v)=\left(\varrho(t,x)+u(t,x)\cdot v + \frac{1}{2}\vE(t,x)(|v|^{2}-d\en_{1})\right)\M(v),
	\end{equation}
where 
	$$
	(\varrho,u,\vE) \in \mathcal{C}\left([0,T]\,;\,\mathscr{W}_{m-1}\right) \cap L^2\left((0,T)\,;\,\mathscr{W}_m\right)
	$$
is solution to the following  \emph{incompressible Navier-Stokes-Fourier system with forcing}
	\begin{equation}\label{eq:NSFint}
	\begin{cases}
	\partial_{t}u-{\frac{{\nu}}{\en_1}}\,\Delta_{x}u + {\en_{1}}\,u\cdot \nabla_{x}\,u+\nabla_{x}p=\lambda_{0}u,\\[6pt]
	\partial_{t}\,\vE-\frac{\gamma}{\en_{1}^{2}}\,\Delta_{x}\vE + \en_{1}\,u\cdot \nabla_{x}\vE
	=\dfrac{\lambda_{0}\,\bar{c}}{2(d+2)}\sqrt{\en_{1}}\,\vE,\\[8pt]
	\mathrm{div}_{x}u=0, \qquad \varrho + \en_{1}\,\vE = 0,
	\end{cases}
	\end{equation}
subject to initial conditions $(\varrho_{\mathrm{in}},u_{\mathrm{in}},\vE_{\mathrm{in}})$ defined by 
	\begin{equation}\label{eq:DI}
	u_{\mathrm{in}}:=\mathcal{P}u_{0}, \quad 
	\vE_{\mathrm{in}}:=\frac{d}{d+2}\theta_{0}-\frac{2}{(d+2)\en_{1}}\varrho_{0}, \quad
	\varrho_{\mathrm{in}}:=-\en_{1}\vE_{\mathrm{in}}
	\end{equation}
where~$\mathcal{P}$ is the Leray projection on divergence-free vector fields and~$(\varrho_0,u_0,\theta_0)$ have been introduced in~\eqref{def:h0}. 
The viscosity~${\nu} >0$ and heat conductivity~$\gamma >0$ are explicit and~$\lambda_{0} >0$ is the parameter appearing in Assumption~\ref{hyp:re}. The parameter~$\bar{c} >0$ is depending on the collision kernel~$b(\cdot)$.
\end{theo}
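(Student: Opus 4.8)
The plan is to follow the classical moment (Hilbert--Bardos--Golse--Levermore) method for near-equilibrium hydrodynamic limits, exploiting the uniform-in-$\e$ bounds supplied by Theorem~\ref{theo:main-cauchy}. First I would fix $T>0$ and observe that the estimate $\|h_\e(t)\|_{\E}\leq C\eta_0\exp(-(1-r)\lambda_\e t)$ together with the $L^1_t\E_1$ control bounds $\{h_\e\}$ in $L^\infty([0,T];\E)\cap L^2([0,T];\E_1)$ uniformly in $\e$, so a subsequence converges weakly-$*$ to some limit $\bm{h}$. To identify its form I rewrite~\eqref{eq:BE} as $\LL h_\e=\e^2\partial_t h_\e+\e\,v\cdot\nabla_x h_\e-\e\,\Q_\re(h_\e,h_\e)$. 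Since $\re(\e)\to1$, the operator $\LL$ converges to $\mathscr{L}_1$ (using~\eqref{eq:GretoM} to control the difference of equilibria and noting that the drift part of $\LL$ is $O(\e^2)$), while all three right-hand terms tend to $0$ weakly; hence $\mathscr{L}_1\bm{h}=0$, i.e. $\bm{h}\in\operatorname{Ker}\mathscr{L}_1$, which forces the form~\eqref{eq:hlimint} parametrised by macroscopic fields $(\varrho,u,\vE)$.

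Second, I would derive the macroscopic system by testing~\eqref{eq:BE} against the collision invariants $\Psi_1=1$, the components $v_j$ and $|v|^2$, which span $\operatorname{Ker}\mathscr{L}_1=\operatorname{Ker}\mathscr{L}_1^*$. Writing $h_\e=\bm{\pi}_0 h_\e+h_\e^\perp$, self-adjointness of $\mathscr{L}_1$ makes the leading $\frac1{\e^2}\mathscr{L}_1$ contribution vanish on these moments. Two features are crucial. The drift part of $\LL$, namely $-(1-\re)\nabla_v\cdot(v h_\e)/\e^2=-(\lambda_0+\eta(\e))\nabla_v\cdot(v h_\e)$, does \emph{not} vanish on the momentum and energy invariants, and after projection it produces the linear forcing $\lambda_0 u$ in the momentum equation of~\eqref{eq:NSFint}; combining this drift contribution in the energy balance with the quadratic inelastic energy-dissipation term $\mathcal{D}_\re$ from~\eqref{eq:Dre} (expanded around $G_\re$ and responsible for the constant $\bar c$) yields the temperature forcing $\frac{\lambda_0\bar c}{2(d+2)}\sqrt{\en_1}\,\vE$. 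This is precisely where the inelasticity survives in the hydrodynamic scale. The balance of the singular $\frac1\e$ terms yields the limiting constraints $\mathrm{div}_x u=0$ and the Boussinesq relation $\varrho+\en_1\vE=0$, read off by passing to the limit in the continuity and momentum-trace equations.

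Third, I would identify the dissipative and convective terms. The viscous stress and Fourier heat flux come from the Chapman--Enskog relation $h_\e^\perp=\e\,\mathscr{L}_1^{-1}\mathcal{R}_\e+o(\e)$, obtained by inverting $\mathscr{L}_1$ on $(\operatorname{Ker}\mathscr{L}_1)^\perp$; evaluating $\frac1\e\int v\otimes v\,h_\e^\perp\,\d v$ and $\frac1\e\int v\,|v|^2 h_\e^\perp\,\d v$ against the classical Burnett functions produces the viscosity $\nu$ and heat conductivity $\gamma$. The nonlinear convective terms $\en_1\,u\cdot\nabla_x u$ and $\en_1\,u\cdot\nabla_x\vE$ arise from the quadratic contribution $\frac1\e\Q_\re(h_\e,h_\e)$ evaluated on the hydrodynamic part $\bm{\pi}_0 h_\e$, using the standard identities for moments of $\Q_1$ against $\M$-weighted polynomials.

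The main obstacle is the passage to the limit in the nonlinear flux and in the quadratic collision term in the presence of the fast streaming $\frac1\e v\cdot\nabla_x$, which generates acoustic modes oscillating at frequency $O(1/\e)$. To neutralise these I would apply the Leray projection $\mathcal{P}$ to the momentum equation (eliminating the pressure $\nabla_x p$ and the compressible part carrying the oscillations) and exploit the uniform $L^2_t\E_1$ regularity to pass to the limit in the remaining products; the exponential-in-time decay from Theorem~\ref{theo:main-cauchy} ensures that the acoustic contributions average out weakly on $[0,T]$, which is the precise meaning of the claimed weak convergence. Finally, the identification of the initial data~\eqref{eq:DI} follows from the assumed strong convergence $\bm{\pi}_0 h_{\mathrm{in}}^\e\to h_0$ by taking moments and applying $\mathcal{P}$, while well-posedness of the forced system~\eqref{eq:NSFint} in $\mathcal{C}([0,T];\mathscr{W}_{m-1})\cap L^2((0,T);\mathscr{W}_m)$ follows from standard energy methods, the linear forcing being harmless for local existence.
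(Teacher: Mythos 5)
Your overall route --- moment equations tested against collision invariants, drift term producing $\lambda_0 u$, Leray projection to kill the pressure and acoustic part, compensated compactness for the nonlinear flux --- is essentially the paper's route (which itself follows \cite{BaGoLe1,BaGoLe2,golseSR}). But there is one genuine gap, and it is precisely the point where the granular case departs from the elastic one: the \emph{strong} Boussinesq relation $\varrho+\en_1\vE=0$. Passing to the limit in the singular terms only yields the weak form $\nabla_x(\varrho+\en_1\vE)=0$, as in \eqref{eq:incomp+Boussinesq}. In the elastic case one upgrades this to the strong form because both $\varrho$ and $\vE$ have zero spatial average; here $\int_{\T^d}\varrho\,\d x=0$ still holds by \eqref{eq:conservh}, but the spatial average $E(t)=\int_{\T^d}\vE(t,x)\,\d x$ is \emph{not} a priori zero, since kinetic energy is dissipated. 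So the weak form only gives $\varrho+\en_1(\vE-E(t))=0$, as in \eqref{eq:strongBoussinesq}, and the limit temperature equation then carries the extra terms $\frac{2d\lambda_0}{d+2}E+\frac{2}{d+2}\frac{\d}{\d t}E$ of \eqref{eq:temperature}; moreover the source $\mathcal{J}_0=-\lambda_{0}\bar{c}\,\en_{1}^{3/2}\big(\varrho+\frac{3}{4}\en_{1}\vE\big)$ from Lemma~\ref{lem:J0} reduces to the claimed forcing $\frac{\lambda_0\bar c}{2(d+2)}\sqrt{\en_1}\,\vE$ \emph{only after} substituting $\varrho=-\en_1\vE$. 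The paper closes this loop in Proposition~\ref{prop:limit2}: averaging the temperature equation in $x$ gives the ODE $\frac{\d}{\d t}E=\bar c_0 E$, and $E(0)=0$ is extracted from the hypothesis on the initial energy \eqref{eq:Fin} (equivalently \eqref{eq:initialenergy}) by matching two expressions for $E(0)$ coming from \eqref{eq:DI} and from the strong convergence of $\bm{\theta}_\e$ at $t=0$. Your sketch, which claims the constraint is ``read off by passing to the limit'' as in the elastic case, silently skips this step, and without it the second equation of \eqref{eq:NSFint} is not obtained.

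A secondary flaw: your mechanism for the acoustic waves is not the right one. The exponential decay rate from Theorem~\ref{theo:main-cauchy} is $\lambda_\e\sim\lambda_0+\eta(\e)$, which is $O(1)$ in $\e$ and cannot average out oscillations of frequency $O(1/\e)$; decay in time and filtering of fast oscillations are unrelated. What actually substitutes for it in the paper is: (i) the Leray projection plus the conjugation $\bm{u}_\e=\exp\big(-t\frac{1-\re(\e)}{\e^2}\big)\mathcal{P}u_\e$ to absorb the drift forcing; (ii) the observation that $\bm{A}$ and $\bm{b}$ lie in the range of $\mathbf{Id}-\bm{\pi}_0$, so Lemma~\ref{lem:hperpholder} (microscopic part of size $O(\e)$ in time average, itself a consequence of the splitting bounds \eqref{eq:estimh0}--\eqref{eq:estimh1}) makes the fluxes $\e^{-1}\langle\bm{A}h_\e\rangle$, $\e^{-1}\langle\bm{b}h_\e\rangle$ bounded, giving equicontinuity; (iii) Aubin--Lions/Arzel\`a--Ascoli for strong convergence of $\mathcal{P}u_\e$ and $\bm{\theta}_\e$, and the Lions--Masmoudi compensated compactness argument for $u_\e\otimes u_\e$. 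Note also that Theorem~\ref{theo:main-cauchy} gives $h_\e$ bounded in $L^1([0,T];\E_1)$, not $L^2$, and the paper's weak convergence is formulated through the splitting $h_\e=\ho_\e+\hu_\e$ with different topologies for the two pieces ($\ho_\e\to0$ strongly in $L^1((0,T);\E_1)$, $\hu_\e\rightharpoonup\bm h$ weakly in $L^2((0,T);\H)$); your compactness setup needs to be rephrased through that splitting for the quoted bounds to apply.
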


\begin{nb} \label{nb:WP}
The data that we consider here are actually quite general. Indeed, the assumption that we make only tells that the macroscopic projection of $h^{\mathrm{in}}_\e$ converges towards some macroscopic distribution and we do not make any assumption on the macroscopic quantities of this distribution. Namely, we do not suppose that the divergence free and the Boussinesq relations are satisfied by $(\varrho_0,u_0,\theta_0)$, the initial layer that could be created by such a lack of assumption is actually absorbed in our notion of weak convergence, the precise notion of which being very peculiar and strongly related to the a priori estimates used for the proof of Theorem \ref{theo:main-cauchy} (see Theorem~\ref{theo:conv} for more details on the type of convergence). 
\end{nb}

To prove Theorem~\ref{theo:hydro}, our approach is reminiscent of the program established in \cite{BaGoLe1,BaGoLe2,golseSR,SR} but simpler because our solutions are stronger than the renormalized ones that are used in~\cite{golseSR}. It is based on computations and compactness arguments that were already used in the elastic case. Let us point out that in our case, additional terms appear due to the inelasticity and they can be handled in the framework of Assumption~\ref{hyp:re}. In Section~\ref{sec:hydro}, we present the proof but only mention its main steps and arguments (details can be found in~\cite[Section~6]{ALT}).

\section{Study of the kinetic linearized problem} \label{sec:linear}
\subsection{Main result on the linearized operator}
The first step in the proof of Theorem~\ref{theo:main-cauchy} is the spectral analysis of the linearized problem associated to~\eqref{eq:BE}. To that end, we introduce
	$$
	\G_{\re,\e}h:=-\frac{1}{\e} v \cdot \nabla_{x}h +\frac{1}{\e^2}\LL h.
	$$ 
We are going to state our main result on $\G_{\re,\e}$ in the space $\E$ defined in~\eqref{def:E-E1} but our analysis actually allows to treat even larger spaces (namely, we can obtain the same result under the softer constraints $m \geq k \geq 0$ and $q>2$) but we only state the linear result in this case because it is the only one that will be used in the rest of the paper. 
Let us also recall that, in any reasonable space (in particular in $\mathcal{E}$ and $\Y_j$ for~$j = -1,0,1$ defined in~\eqref{def:Y}-\eqref{def:Yj}), the elastic operator has a spectral gap: there exists~$\mu_{\star} >0$ such that
	\begin{equation} \label{eq:sgG1eps}
	\mathfrak{S}(\G_{1,\e})\cap \C_{\mu_\star}=\{0\}
	\end{equation}
where $0$ is an eigenvalue of algebraic multiplicity $d+2$ of $\G_{1,\e}$ associated to the eigenfunctions 
$$\{\M,v_{1}\M,\dots,v_d\M,|v|^{2}\M\}$$ (recall that $\C_{\mu_\star}$ is defined in~\eqref{def:Ca}). This can be proven by an enlargement argument due to~\cite{GMM} based on the fact that in the Hilbert space
	\begin{equation} \label{def:H}
	\H := \W^{m,2}_{x,v} (\M^{-1/2}), \quad m>d
	\end{equation}
a result of hypocoercivity has been proven in~\cite{briant} (the constraint $m \geq 1$ would actually be enough but we will only make use of this result for $m>d$ in the sequel). More precisely, introducing the other Hilbert space 
	\begin{equation} \label{def:H1}
	\H_1 := \W^{m,2}_{x,v} (\M^{-1/2} \langle v \rangle^{1/2}),
	\end{equation} 
there exists $\mu_\star>0$ and a norm equivalent to the usual one uniformly in $\e$ (we still denote it by $\|\cdot\|_\H$ and~$\langle \cdot, \cdot \rangle_\H$ its associated scalar product to lighten the notations) such that 
	\begin{equation} \label{eq:hypocoerc}
	\langle \G_{1,\e} h,h \rangle_\H \leq - \frac{\mu_\star}{\e^2} \|(\mathbf{Id} - \bm{\pi}_0) h\|^2_{\H_1} - \mu_\star \|h\|^2_{\H_1}.
	\end{equation}

As we shall see in the following result, the scaling \eqref{eq:scaling} in Assumption~\ref{hyp:re} is precisely the one which allows to preserve exactly $d+2$ eigenvalues in the neighborhood of zero for~$\G_{\re,\e}$.  Let us now state our main spectral result (see Figure~1 for an illustration where we have denoted $\lambda_\e:=-\lambda_{d+2}(\e)$):
%
%

\begin{figure}\label{fig:1}
\begin{tikzpicture}[line cap=round,line join=round,x=1.0cm,y=1.0cm, scale=0.9]
\draw[->,color=black] (-6,0) -- (4,0);
\draw[->,color=black] (0,-3.3) -- (0,3.3);
\draw[color=black] (+5.2pt,-5.3pt) node  {\tiny O} ;
\draw(0,0) circle (1cm);
\draw[fill](-5,0) circle (0.6mm);
\draw[fill](-4,0) circle (0.6mm);
\draw (1.6,-0.8) node {\small $\mathbb{D}(\mu_{\star}-\mu)$};
\draw[dashed] (-4,-3)--(-4,3);
\draw[dashed] (-5,-3)--(-5,3);
\draw (0,0)--(0.71,0.71)node[midway, above]{$\chi$};
\draw[<->] (-5,-3.3)--(-4,-3.3)node[midway, below]{$\chi$};
\draw (-5,-0.3) node[left] {$-\mu_{\star}$};
\draw (-4,-0.3) node[right] {$-\mu$};
\draw (4.3,0) node[below] {\small $\mathrm{Re}\lambda$};
\draw (0,3.3) node[left] {\small $\mathrm{Im}\lambda$};
\draw[fill] (-0.3,0) circle (0.6mm);
\draw (-0.1,-0.3) node[left] {\small $-{\lambda}_{\e}$};
\draw (-2,2.3) node {\small $\C_{\mu}\setminus\mathbb{D}(\mu_{\star}-\mu)$};
\end{tikzpicture}
\caption{The set $\C_{\mu} \setminus\mathbb{D}(\mu_{\star}-\mu)$ and the eigenvalue $-{\lambda}_{\e}$.}
\end{figure}

\begin{theo}\phantomsection \label{theo:linear} 
Assume that Assumption~\ref{hyp:re} is met. For $\mu$ close enough to $\mu_{\star}$ defined in~\eqref{eq:sgG1eps} (in an explicit way), there are some explicit~$\overline{\e}>0$ and $\overline \lambda>0$ depending only on $\chi:=\mu_\star-\mu$ such that, for all~$\e \in (0,\overline{\e})$ and~$\lambda_0 \in [0,\overline \lambda)$, the linearized operator $\G_{\re(\e),\e}$ has the following spectral property in~$\mathcal{E}$: 
\begin{equation}\label{eq:spectGe}
\mathfrak{S}(\G_{\re(\e),\e}) \cap\C_\mu=\{\lambda_{1}(\e),\ldots,\lambda_{d+2}(\e)\},
\end{equation}
with
$$\lambda_{1}(\e)=0, \qquad \lambda_{j}(\e)=\frac{1-\re(\e)}{\e^{2}}, \qquad j=2,\ldots,d+1,$$
and
$$\lambda_{d+2}(\e)=-\frac{1-\re(\e)}{\e^{2}}+\mathrm{O}(\e^{2}) \quad \text{as} \quad \e \to 0.$$
\end{theo}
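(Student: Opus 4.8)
The plan is to treat $\G_{\re(\e),\e}$ as a perturbation of the elastic operator $\G_{1,\e}$, whose spectral picture \eqref{eq:sgG1eps}--\eqref{eq:hypocoerc} is known, and to track the cluster of $d+2$ eigenvalues emerging from the degenerate eigenvalue $0$. The two operators share the transport term and differ only through their velocity part, $\G_{\re,\e}-\G_{1,\e}=\e^{-2}(\LL-\mathscr{L}_1)$. Decomposing
$$
\LL-\mathscr{L}_1=\big[\Q_\re(G_\re,\cdot)-\Q_1(\M,\cdot)\big]+\big[\Q_\re(\cdot,G_\re)-\Q_1(\cdot,\M)\big]-(1-\re)\nabla_v\cdot(v\,\cdot)
$$
and using $1-\re=\e^2(\lambda_0+\eta(\e))$ from \eqref{eq:scaling} together with $\|G_\re-\M\|=O(1-\re)$ from \eqref{eq:GretoM}, each bracket is $O(1-\re)=O(\e^2)$. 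Hence the full difference $\e^{-2}(\LL-\mathscr{L}_1)$ is of size $O(\lambda_0+\eta(\e))$ in the relevant operator norms, \emph{uniformly in} $\e$. The first point I would record is precisely this: although the individual operators are stiff (the factors $\e^{-2}$ and $\e^{-1}$ blow up), the perturbation separating $\G_{\re,\e}$ from $\G_{1,\e}$ is genuinely small once $\lambda_0$ and $\e$ are small, which is exactly what Assumption~\ref{hyp:re} and the smallness of $\lambda_0$ provide.

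Next I would localize and count the spectrum. The hypocoercivity estimate \eqref{eq:hypocoerc} in $\H$, transferred to the larger space $\E$ by the enlargement/factorization method of \cite{GMM}, yields resolvent bounds $\|(z-\G_{\re,\e})^{-1}\|$ that are uniform in $\e$ on the contour $\partial\mathbb{D}(\mu_\star-\mu)$ and throughout $\C_\mu\setminus\mathbb{D}(\mu_\star-\mu)$ (recall $\C_\mu$ from \eqref{def:Ca}). Combined with the smallness from the first step, a Neumann-series comparison with $(z-\G_{1,\e})^{-1}$ shows that $\G_{\re,\e}$ has no spectrum in $\C_\mu\setminus\mathbb{D}(\mu_\star-\mu)$, so all of $\mathfrak{S}(\G_{\re,\e})\cap\C_\mu$ sits inside the small disk. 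I would then introduce the spectral projection
$$
\Pi_{\re,\e}:=\frac{1}{2i\pi}\oint_{\partial\mathbb{D}(\mu_\star-\mu)}(z-\G_{\re,\e})^{-1}\,\d z,
$$
which depends continuously on $\re$ by the resolvent comparison and whose rank is therefore a constant integer for $\re$ near $1$; since $\mathrm{rank}\,\Pi_{1,\e}=d+2$ by \eqref{eq:sgG1eps}, there are exactly $d+2$ eigenvalues (with algebraic multiplicity) in $\C_\mu$. A Fourier decomposition in $x$ then localizes them at the zeroth mode: for $n\neq0$ the large transport frequency $\e^{-1}n\cdot v$ together with \eqref{eq:hypocoerc} keeps the corresponding spectrum out of $\C_\mu$, so the $d+2$ eigenvalues are those of the purely velocity operator $\e^{-2}\LL$ near $0$.

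It remains to identify these $d+2$ eigenvalues. Two families are \emph{exact} and follow from conservation laws. Since $\Q_\re$ conserves mass, $\int_{\R^d}\LL h\,\d v=0$, so the mass functional $h\mapsto\int_{\R^d} h\,\d v$ is a left eigenvector for $\lambda_1(\e)=0$. Since $\Q_\re$ also conserves momentum and $\int_{\R^d}\nabla_v\cdot(vh)\,v\,\d v=-\int_{\R^d}h\,v\,\d v$ (the computation underlying \eqref{eq:driftmomentum}), the $d$ momentum functionals $h\mapsto\int_{\R^d} h\,v_i\,\d v$ are left eigenvectors with eigenvalue $\frac{1-\re}{\e^2}$, giving $\lambda_j(\e)=\frac{1-\re(\e)}{\e^2}$ for $j=2,\dots,d+1$. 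The last eigenvalue $\lambda_{d+2}(\e)$ is the only genuinely perturbative one. Reducing $\e^{-2}\LL$ to the energy direction spanned by $\Psi_{d+2}\M$ (see \eqref{def:Psii}) by the effective-operator (Schur-complement) formula, the leading term is $\bm{\pi}_0\,\e^{-2}(\LL-\mathscr{L}_1)\,\bm{\pi}_0$ restricted to that mode, while the coupling to the complementary modes contributes only $O(\e^2)$, since the inverse of $\e^{-2}\mathscr{L}_1$ off $\mathrm{Ker}\,\mathscr{L}_1$ carries a factor $\e^2$. A direct computation gives that the drift part contributes $+2(\lambda_0+\eta(\e))$ on this mode, while the first-order inelastic collisional correction — essentially the linearization of the energy-dissipation rate $\mathcal{D}_\re$ of \eqref{eq:Dre} — contributes $-3(\lambda_0+\eta(\e))$, for the net leading value $-(\lambda_0+\eta(\e))=-\frac{1-\re(\e)}{\e^2}$, whence $\lambda_{d+2}(\e)=-\frac{1-\re(\e)}{\e^2}+O(\e^2)$.

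I expect the obstacle to be twofold. The structurally hardest point is the resolvent control that is uniform in the Knudsen number $\e$: the stiff scalings $\e^{-2}\LL$ and $\e^{-1}v\cdot\nabla_x$ must be tamed by the hypocoercivity \eqref{eq:hypocoerc} and carried from $\H$ to $\E$ through the enlargement argument, and it is here that the critical calibration $1-\re\sim\e^2$ in Assumption~\ref{hyp:re} is essential, keeping the perturbation of size $O(\lambda_0+\eta(\e))$ rather than $O(\e^{-2})$. The most delicate \emph{computation} is the energy eigenvalue $\lambda_{d+2}(\e)$ to the stated $O(\e^2)$ precision, which hinges on a careful first-order asymptotic expansion of the inelastic collision operator and of the cooling state $G_\re$ around their elastic counterparts $\Q_1$ and $\M$; this is the step where the granular nature of the problem genuinely enters and departs from the classical elastic analysis.
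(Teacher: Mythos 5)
Your outline reproduces the paper's architecture (perturbation around the elastic operator, rank counting via spectral projections, exact mass/momentum eigenvalues from moment identities, expansion of the energy eigenvalue), but it has one genuine gap at its load-bearing step. You assert that $\e^{-2}(\LL-\mathscr{L}_{1})$ is $O(\lambda_0+\eta(\e))$ ``in the relevant operator norms, uniformly in $\e$'' and then run a direct Neumann-series comparison of $(z-\G_{\re,\e})^{-1}$ with $(z-\G_{1,\e})^{-1}$. This is precisely what fails: the difference $\G_{\re,\e}-\G_{1,\e}$ is not bounded on any fixed space, nor even relatively bounded with respect to $\G_{1,\e}$ (the paper stresses this in its remark (ii)); the sharp available estimate, \eqref{eq:perturb}, reads $\|\G_{\re,\e}-\G_{1,\e}\|_{\Y_j\to\Y_{j-1}}\lesssim (1-\re)/\e^2$ and \emph{loses} one velocity derivative and a polynomial weight $\langle v\rangle^{\kappa+2}$. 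Each iterate of the naive series $\left[(\G_{\re,\e}-\G_{1,\e})\Rs(z,\G_{1,\e})\right]^{p}$ therefore degrades the functional space, and $\Rs(z,\G_{1,\e})$ does not restore the lost derivative and weight with $\e$-uniform constants, so the series cannot be summed. The paper's proof is engineered exactly around this obstruction: one splits $\G_{\re,\e}=\A_\e+\B_\e$ as in \eqref{eq:splitGeps}, observes that $\Rs(\lambda,\B_\e)=O(\e^{2})$ (hypodissipativity of $\B_\e+\nu_0\e^{-2}$, Lemma~\ref{lem:reghypo}) while $\A_\e=O(\e^{-2})$ is regularizing, so that the composition $\A_\e\Rs(\lambda,\B_\e)$ is bounded $\Y\to\Y_{1}$ \emph{uniformly in $\e$}, and then inverts $\lambda-\G_{\re,\e}$ via the factorization $\Rs(\lambda,\G_{\e})=\Gamma_\e(\lambda)(\mathbf{Id}-\mathcal{J}_\e(\lambda))^{-1}$ with $\mathcal{J}_\e(\lambda)=(\G_\e-\G_{1,\e})\Rs(\lambda,\G_{1,\e})\A_\e\Rs(\lambda,\B_\e)$ (Proposition~\ref{lem:inverse}, following \cite{Tr}), whose Neumann series does close in the single space $\Y$; the same factorization is needed under the contour integral to prove $\|\mathbf{P}_\e-\mathbf{P}_0\|_{\Y\to\Y}<1$ (Lemma~\ref{lem:PaP0}). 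Without this regularize-then-perturb mechanism your localization and rank-counting steps have no proof.

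Two secondary discrepancies. First, the functional setting: the perturbative argument cannot be run directly in $\E$ (which is $L^1_vL^2_x$-based), nor transferred straight from the hypocoercivity space $\H$; the paper carries it out in the $L^2_{v,x}$-based scale $\Y,\Y_{\pm1}$ — Fubini-type manipulations are needed to reach the sharp rate $(1-\re)/\e^{2}$ — and only afterwards enlarges to $\E$ via \cite{GMM}. Second, your Fourier-in-$x$ localization of the $d+2$ eigenvalues at the zeroth spatial mode is asserted rather than proved; the paper avoids it entirely through the inclusion $\mathfrak{S}(\mathscr{L}_{\re(\e)})\cap\C_\mu\subset\mathfrak{S}(\G_{\e})\cap\C_\mu$ (eigenfunctions of $\mathscr{L}_{\re(\e)}$ depend only on $v$ and remain eigenfunctions on the torus) combined with a dimension count forcing equality \eqref{eq:spectr=}. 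On the positive side, your moment computations for $\lambda_1(\e)=0$ and $\lambda_j(\e)=(1-\re(\e))/\e^2$ match the paper's, and your Schur-complement heuristic for the energy eigenvalue — drift contributing $+2\,\frac{1-\re(\e)}{\e^2}$, the linearized dissipation $-3\,\frac{1-\re(\e)}{\e^2}$, off-kernel coupling $O(\e^{2})$ — is consistent with the expansion the paper imports from \cite{MiMo3}, though turning it into a proof would require the quantitative control of $\Q_\re-\Q_1$ and $G_\re-\M$ in the polynomially weighted spaces that the paper develops for this purpose.
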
 

\begin{nb}
It is worth noticing that the eigenvalue $\lambda_1(\e)=0$ corresponds with the property of mass conservation of the operator $\G_{\re(\e),\e}$. Concerning the intermediate eigenvalues $\lambda_{j}(\e)$ for $j=2,\ldots,d+1$, as one can see on their definition, they may be positive, this is due to the fact that the collision operator $\Q_\re$ preserves momentum while the drift operator $\nabla_v ( v \cdot)$ does not. However, using~\eqref{eq:driftmomentum}, one can prove that the vanishing momentum is preserved by the whole operator $\G_{\re(\e),\e}$, consequently, those eigenvalues won't affect the long-time analysis of our problem. Finally, the eigenvalue $\lambda_{d+2}(\e)$ is directly linked to the non-preservation of energy property of~$\G_{\re(\e),\e}$. 
\end{nb}

We are going to prove Theorem~\ref{theo:linear} in two stages. First, we perform a perturbative argument (reminiscent of~\cite{Tr}) in a $L^2_{v,x}$-based Sobolev space, namely in
	\begin{equation} \label{def:Y}
	\Y:=\W^{s,2}_{v}\W^{\ell,2}_{x} (\langle v \rangle^{r}), \quad \ell \in \N, \, \, \, \, s \in \N^*, \, \, \, \, \ell \geq s+1, \, \, \, \, r>r^\star+ \kappa+2
	\end{equation}
where 
	$$
	r^\star := 4\sqrt{\frac{\sigma_{1}}{\sigma_{0}}} + \frac{3}{2}
	$$
with $\sigma_0$ and $\sigma_1$ defined in~\eqref{eq:collfreq} and $\kappa>d/2$. The key point of our approach is to see~$\G_{\re,\e}$ as a \emph{perturbation} of the elastic linearized operator $\G_{1,\e}$. We then use an enlargement argument (from~\cite{GMM}) to extend the result from~$\Y$ to the space $\mathcal{E}$ defined in~\eqref{def:E-E1}. 

\smallskip
Several remarks are in order:
\begin{enumerate}
\item[(i)] First, let us remind that the global equilibrium of our equation~$G_\re$ defined in~\eqref{def:Gre} has some exponential fat tail and in particular, decays more slowly than a standard Maxwellian distribution (see~\cite{MiMo3}). As a consequence, we can not rely on classical works on the elastic linearized operator which are developed in spaces of type~$L^2_{v,x}(\M^{-1/2})$ with $\M$ defined in~\eqref{eq:max}. To overcome this difficulty, we exploit results coming from~\cite{GMM} in which an enlargement theory has been carried out. The results proven in~\cite{GMM} include a spectral analysis of the elastic Boltzmann operator $\G_{1,1}$ in larger spaces (in particular of type~$L^2_{v,x}$) with ``soft weights'' that can be polynomial or stretched exponential. In the same line of ideas, these results have been extended to the rescaled elastic operator $\G_{1,\e}$ in~\cite{bmam}.

\item[(ii)] Let us also point out that the perturbation at stake does not fall into the realm of the classical perturbation theory of unbounded operators as described in \cite{kato} because the perturbation is not relatively bounded. Indeed, the domain of $\G_{1,\e}$ in~$\Y$ is given by $\W^{s+1,2}_{v}\W^{\ell+1,2}_{x} (\langle v \rangle^{r+1})$ while if one wants to be sharp in terms of rate, the best estimate in terms of functional spaces that we are able to get is 
	\begin{equation} \label{eq:perturb}
	\|\G_{\re,\e} - \G_{1,\e}\|_{\Y_j \to \Y_{j-1}} ={1 \over \e^2} \|\mathscr{L}_{\re} - \mathscr{L}_{1}\|_{\Y_j \to \Y_{j-1}} \lesssim \frac{1-\alpha}{\e^2}, \quad j=0,1, 
	\end{equation}
where the spaces $\Y_j$ are defined through  
	\begin{equation} \label{def:Yj}
	\qquad \quad
	\Y_{-1} := \W^{s-1,2}_v\W^{\ell,2}_x (\langle v \rangle^{r-\kappa-2}), \quad 
	\Y_0 := \Y, \quad \Y_1 := \W^{s+1,2}_v \W^{\ell,2}_x ( \langle v \rangle^{r+\kappa+2})
	\end{equation}
with $\kappa>d/2$. 
These estimates (whose proofs can be found in~\cite[Lemma~3.3]{ALT}) are a generalization and optimisation of estimates obtained in~\cite{MiMo3} and are sharp in terms of rate, this sharpness being needed in our analysis since it allows us to deal with the case $\lambda_0>0$ in Assumption~\ref{hyp:re}. 

\item[(iii)] As a consequence, we have to use refined perturbation arguments whose key insights come from~\cite{Tr}. Note however that we drastically simplify the analysis performed in~\cite{Tr} by remarking that the difference operator $\G_{\re,\e} - \G_{1,\e}$ does not involve any spatial derivative and that we ``only'' need to develop a spectral analysis of $\G_{\re,\e}$ without being able to obtain decay properties on the associated semigroup. As a consequence, we don't need to use a spectral mapping theorem, nor do we need to use an iterated version of Duhamel formula and this is crucial in order to reach the optimal scaling~\eqref{eq:scaling} for our restitution coefficient. 

\item[(iv)] Let us finally mention that we perform our perturbative argument in $\Y$ which is a~$L^2_{v,x}$-based Sobolev space instead of performing it in $\mathcal{E}$ (which is $L^1_vL^2_x$-based) directly. This intermediate step seems necessary because even if $\LL - \mathscr{L}_1$ satisfies nice estimates in~$L^1_v$, the use of Fubini theorem is actually crucial to get the rate $(1-\alpha)/\e^2$ in estimates of type~\eqref{eq:perturb}. 
\end{enumerate}

\smallskip

\subsection{Elements of proof of Theorem~\ref{theo:linear}}
As mentioned above, the basis of the proof of this theorem is to see~$\LL$ as a perturbation of $\mathscr{L}_{1}$.

We start by giving a splitting of it into two parts: one which has some good regularizing properties (in the velocity variable) and another one which is dissipative. For any $\delta>0$, one can write 
$\mathscr{L}_1= \mathcal{A}^{(\delta)} + \mathcal{B}_1^{(\delta)}$ with $\mathcal{A}^{(\delta)}$ and $\mathcal{B}_1^{(\delta)}$ defined through an appropriate mollification-truncation process (see~\cite[Section~4.3.3]{GMM} and~\cite[Section~2.2]{ALT} for the details). The elastic collision operator $\mathscr{L}_1$ writes (see the strong formulation of~$\Q_1$ in~\eqref{eq:Q1strong}): 
	$$
	\mathscr{L}_1 g = \int_{\R^d \times \S^{d-1}} b(\sigma \cdot \bar{q}) |v-v_*| (\M'_* g' +\M' g'_* - \M g_*)\, \d{\sigma}\, \d v_*
	- \int_{\R^{d}}\M_*|v-v_{*}|\, \d v_{*} g
	$$
where we have used the shorthand notations $g = g(v)$, $g_* = g(v_*)$, $g' = g(v')$, $g'_* = g(v'_*)$. 
We define
	$$
	\mathcal{A}^{(\delta)}  g
	:= \int_{\R^d \times \S^{d-1}} \Theta_\delta \, b(\sigma \cdot \bar{q}) |v-v_*| (\M'_* g' +\M' g'_* - \M g_*)\, \d{\sigma}\, \d v_* 
	$$
	\begin{align*}
	\mathcal{B}_1^{(\delta)} g
	&:= \int_{\R^d \times \S^{d-1}} (1-\Theta_\delta) \, b(\sigma \cdot \bar{q}) |v-v_*| (\M'_* g' +\M' g'_* - \M g_*)\, \d{\sigma}\, \d v_* 
	- g\int_{\R^{d}}\M_*|v-v_{*}|\, \d v_{*}  
	\end{align*}
where $\Theta_\delta = \Theta_\delta(v,v_*,\sigma)$ is an appropriate truncature function.
%
{The dissipativity property of $\mathcal{B}_1^{(\delta)}$ comes from the fact that the truncature function $\Theta_\delta$ is defined so that the first term is small as $\delta$ goes to $0$ and the fact that there exist $\sigma_0>0$ and $\sigma_1>0$ such that 
	\begin{equation} \label{eq:collfreq}
	\sigma_0  \langle v \rangle \leq \int_{\R^{d}}\M_{*}|v-v_{*}|\, \d v_{*} \leq \sigma_1  \langle v \rangle, \quad v \in \R^d. 
	\end{equation}}
As a consequence, $	\mathcal{B}_1^{(\delta)}$ is going to be dissipative for $\delta$ small enough. 
This leads to the following decomposition of $\LL$:
	\begin{equation} \label{eq:splitLalpha}
	\mathscr{L}_{\re}=\mathcal{B}_{\re}^{(\delta)} + \mathcal{A}^{(\delta)}\,,
	\qquad \text{where} \quad 
	\mathcal{B}_{\re}^{(\delta)}:=\underbrace{\mathcal{B}_{1}^{(\delta)}}_{\text{dissipative}}
	+\underbrace{\left[\LL-\mathscr{L}_{1}\right]}_{\text{small as $\alpha \to 1$}}
	\end{equation}
and then the following decomposition of $\G_{\re,\e}$:
	\begin{equation} \label{eq:splitGeps}
	\mathcal{G}_{\re,\e}=\mathcal{A}_{\e}^{(\delta)}+ \mathcal{B}_{\re,\e}^{(\delta)}\,, 
	\qquad \text{where} \qquad 
	\mathcal{A}_{\e}^{(\delta)}:=\frac{1}{\e^2} \A^{(\delta)}, \quad 
	\mathcal{B}_{\re,\e}^{(\delta)}:=\frac{1}{\e^2} \mathcal{B}_{\re}^{(\delta)} -\frac{1}{\e} v\cdot \nabla_{x}\,.
	\end{equation}

Our analysis of this splitting and then of the spectrum of $\mathcal{G}_{\re,\e}$ relies on several elements: 
the nice properties of the above-mentioned splitting of $\mathscr{L}_1= \mathcal{A}^{(\delta)} + \mathcal{B}_1^{(\delta)}$ coming from~\cite{GMM},
some refined bilinear estimates on the collision operator coming from~\cite{ACG}, new estimates on $\G_\re-\M$ that are reminiscent of estimates proven in~\cite{CMS} (see~\cite[Lemma~2.3]{ALT}) and also new estimates on~$\Q_\re - \Q_1$ (see~\cite[Lemmas~2.1 and 2.2]{ALT}). Concerning the latter point, we exploit ideas developed in~\cite{MiMo3} but our situation is more involved because we work in polynomially weighted spaces whereas in~\cite{MiMo3}, the authors were working with stretched exponential weights.

In the following lemma, we provide some regularization and hypodissipativity results on the splitting $\mathcal{G}_{\re,\e}=\mathcal{A}_{\e}^{(\delta)}+ \mathcal{B}_{\re,\e}^{(\delta)}$ (see~\cite[Lemma~2.7 and Proposition~2.9]{ALT}):
 \begin{lem} \label{lem:reghypo} There holds:
 \begin{enumerate}[leftmargin=0.7cm]
 \item
 For any $k \in \N$ and $\delta >0,$ there are two positive constants $C_{k,\delta},R_\delta >0$ such that
$\mathrm{supp}\left(\mathcal{A}^{(\delta)}g\right)\subset B(0,R_{\delta})$
and
	\begin{equation}\label{eq:Adelta}
	 {\|\mathcal{A}^{(\delta)}g\|_{\W^{k,2}_{v}(\R^{d})}} 
	 \leq C_{k,\delta}\|g\|_{L^{1}_{v}(\langle v \rangle)}, \qquad \forall \,  g \in L^{1}_v(\langle v \rangle).
	 \end{equation}
\item
 There exist $\delta_0$, $\re_0$, $\nu_0$ such that for all $\re \in (\re_0,1)$ and $\delta \in (0,\delta_0)$,
 \begin{align*}
 \mathcal{B}_{\re,\e}^{(\delta)} + \e^{-2}\nu_0  \quad  \text{ is hypo--dissipative in $\E$ and } \Y_j, \, \, j =-1,0,1,
 \end{align*}
 where we recall that the spaces $\E$ and $\Y_j$ are respectively defined in~\eqref{def:E-E1} and~\eqref{def:Yj}. 
 \end{enumerate}
 \end{lem}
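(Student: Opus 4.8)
The plan is to treat the two assertions by different mechanisms: the first is a smoothing/compactness estimate for the gain-type operator $\mathcal{A}^{(\delta)}$, while the second is a dissipativity estimate driven by the collision frequency. For assertion (1), I would use that the truncation $\Theta_\delta$ confines, by construction, all four velocities $v,v_*,v',v_*'$ to a fixed ball and keeps the relative speed $|v-v_*|$ and the collision angle away from their degenerate values. On this region the change of variables $(v_*,\sigma)\mapsto(v',v_*')$ is a smooth non-degenerate diffeomorphism with bounded Jacobian, so that each of the three contributions $\M_*' g'$, $\M' g_*'$ and $\M g_*$ can be rewritten as an integral of $g$ against a kernel that is smooth and compactly supported in all of its arguments, the smoothness being inherited from that of $\M$ and of $\Theta_\delta$. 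The inclusion $\mathrm{supp}(\mathcal{A}^{(\delta)}g)\subset B(0,R_\delta)$ is then immediate; differentiating under the integral sign, bounding the kernel and its $v$-derivatives, and using Cauchy--Schwarz over the compact $v$-domain yields $\|\mathcal{A}^{(\delta)}g\|_{\W^{k,2}_v}\leq C_{k,\delta}\|g\|_{L^1_v(\langle v\rangle)}$, the weight $\langle v\rangle$ absorbing the factor $|v-v_*|$. This is the standard mollification--truncation smoothing of~\cite{GMM}, which I would invoke.

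For assertion (2), I would first factor the scaling. Since
$$\mathcal{B}_{\re,\e}^{(\delta)}+\e^{-2}\nu_0=\frac{1}{\e^2}\left(\mathcal{B}_\re^{(\delta)}+\nu_0\right)-\frac{1}{\e}\,v\cdot\nabla_x,$$
and $\mathcal{B}_\re^{(\delta)}=\mathcal{B}_1^{(\delta)}+(\mathscr{L}_\re-\mathscr{L}_1)$ by~\eqref{eq:splitLalpha}, the heart of the matter is the coercivity of the purely-velocity operator $\mathcal{B}_1^{(\delta)}+\nu_0$. Its loss part is multiplication by the collision frequency $\int\M_*|v-v_*|\,\d v_*\geq\sigma_0\langle v\rangle$ from~\eqref{eq:collfreq}, which in the weighted brackets of $\Y_j$ and $\E$ produces a coercive term controlling the norm with an extra gain of $\langle v\rangle^{1/2}$ and dominating every $v$- and $x$-derivative up to the orders allowed by the space; the remaining gain part carries the prefactor $(1-\Theta_\delta)$ and is, by the definition of $\Theta_\delta$, of operator norm $o(1)$ as $\delta\to0$, hence absorbed. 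The inelastic correction $\mathscr{L}_\re-\mathscr{L}_1$ is handled as a genuinely small bounded perturbation, of operator norm $\lesssim 1-\re$ on each fixed space $\Y_j$ and $\E$ (the same family of estimates underlying~\eqref{eq:perturb}), so that for $\re>\re_0$ close to $1$ its bracket is absorbed as well, up to fixing $\nu_0$. In $\E$, which is $L^1_v$-based, I would run the same computation with the $L^1_v$ bracket, pairing against $\langle v\rangle^q\,\mathrm{sgn}(g)$ and its $x$-derivatives, where the loss frequency again dominates and the small gain and inelastic terms are absorbed by smallness.

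It then remains to reinstate the transport part and to check uniformity in $\e$. After $\partial_v$- and $\partial_x$-differentiation the top-order piece $v\cdot\nabla_x\,\partial_v^a\partial_x^b g$ is skew and drops out upon integrating by parts in $x$ on $\T^d$, but each velocity derivative meeting $v\cdot\nabla_x$ generates a commutator $\partial_x$, i.e. terms of order $\e^{-1}$ that trade a $v$-derivative for an $x$-derivative. These are exactly the cross-terms of hypocoercivity: I would absorb them by working in an equivalent norm of the type producing~\eqref{eq:hypocoerc} (a small mixed $\langle v\rangle$-weighted coupling of the $\partial_x$ and $\partial_v$ blocks), using the condition $\ell\geq s+1$ of~\eqref{def:Y} to trade the derivative orders, and using that the friction is of order $\e^{-2}$ while the transport commutators are only of order $\e^{-1}$, so the former dominates the latter for $\e$ small. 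The $\e^{-2}$ prefactor then merely rescales the dissipation rate, giving uniform hypo-dissipativity of $\mathcal{B}_{\re,\e}^{(\delta)}+\e^{-2}\nu_0$ in each space.

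The main obstacle is precisely this last step: arranging a single equivalent norm in which the $\e^{-2}$ collision friction simultaneously dominates the $\e^{-1}$ transport commutators, the $o(1)$ gain term of $\mathcal{B}_1^{(\delta)}$, and the $O(1-\re)$ inelastic perturbation, \emph{uniformly} in $\e$, and doing so identically in the Hilbertian spaces $\Y_{-1},\Y_0,\Y_1$ and in the $L^1_v$-based space $\E$, where no scalar product is available. The correct order of the choices---first $\delta$ small, then $\re_0$ close to $1$, then $\nu_0$ and the coupling parameter of the equivalent norm---together with consistent weight exponents across the three $\Y_j$, is where the bookkeeping is delicate.
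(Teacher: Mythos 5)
Your part (1) is fine and is exactly the mollification--truncation smoothing of~\cite{GMM} that the paper invokes. The genuine gap is in part (2), at the step where you declare that $\mathscr{L}_{\re}-\mathscr{L}_{1}$ is ``a genuinely small bounded perturbation, of operator norm $\lesssim 1-\re$ on each fixed space $\Y_j$ and $\E$.'' This is false, and the paper says so explicitly in its remark (ii): the perturbation is \emph{not} relatively bounded, and the sharp estimates behind~\eqref{eq:perturb} only map $\Y_j\to\Y_{j-1}$, i.e.\ they lose one $v$-derivative and $\kappa+2$ weight powers. Indeed $\mathscr{L}_{\re}$ contains, by~\eqref{def:LL}, the drift $(1-\re)\nabla_v\cdot(v\,\cdot)$, which is genuinely unbounded on any fixed $\Y_j$ or $\E$, no matter how small $1-\re$ is. Your absorption argument then fails as written: the coercivity produced by the collision frequency~\eqref{eq:collfreq} gains only a power of the weight $\langle v\rangle$, not a derivative, so there is nothing in your bracket to dominate a term of size $(1-\re)\|\nabla_v g\|$. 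The correct treatment is to handle the inelastic correction \emph{inside} the dissipativity bracket rather than as an operator-norm perturbation: for the drift term one integrates by parts so that the derivative falls on the weight $\langle v\rangle^{q}$ (or on the duality factor $\mathrm{sgn}(g)\langle v\rangle^q$ in the $L^1_v$-based space $\E$), producing $O(1-\re)$ contributions with no derivative loss and at most one extra weight power, which the $\langle v\rangle$-gain of the dissipation absorbs; for the collisional part one uses $\|G_\re-\M\|\lesssim 1-\re$ from~\eqref{eq:GretoM} to keep the loss term's frequency bounded below and to absorb the remaining $O(1-\re)$ pieces the same way. This is where the choice of $\re_0$ close to $1$ actually enters.

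A secondary, more methodological point: your appeal to a hypocoercive norm ``of the type producing~\eqref{eq:hypocoerc}'' misidentifies the mechanism. Hypocoercivity is needed for $\G_{1,\e}$ because the damping degenerates on the kernel of $\mathscr{L}_1$; the truncated operator $\mathcal{B}^{(\delta)}_{\re,\e}$ has no such kernel --- it is damped at rate $\nu_0\,\e^{-2}\langle v\rangle$ on the \emph{whole} space. Consequently no mixed $\partial_x$--$\partial_v$ coupling is required (and in $\E$, which is $L^1_v$-based, no scalar product is even available to define one). What suffices --- and is in fact what you describe in your last paragraph --- is a graded equivalent norm with small, fixed, $\e$-independent constants weighting the components with $v$-derivatives: each commutator $[\partial_v^{\beta},\e^{-1}v\cdot\nabla_x]$ trades one $v$-derivative for one $x$-derivative (whence the structural conditions $m-1\geq k$ and $\ell\geq s+1$) and lands at a lower grade of the norm, where the $\e^{-2}$ damping dominates the $\e^{-1}$ commutator uniformly for $\e\leq 1$ since $\e^{-1}=\e\cdot\e^{-2}$. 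With that relabelling, and with the bracket-level treatment of $\mathscr{L}_\re-\mathscr{L}_1$ described above, your order of choices (first $\delta$, then $\re_0$, then $\nu_0$ and the grading constants) is the right one.
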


In what follows, we suppose that Assumption~\ref{hyp:re} is satisfied. We introduce $\e_0$ which is such that $\re(\e_0) = \re_0$ (and thus $\re(\e) \in (\re_0,1)$ for all $\e \in (0,\e_0)$) and consider $\delta \in (0,\delta_0)$, $\e \in (0,\e_0)$.
We will denote $\G_\e := \G_{\re,\e}$ as well as $\A_\e := \A_\e^{(\delta)}$ and $\B_\e := \B^{(\delta)}_{\alpha,\e}$ but do not change the notations $\G_{1,\e}$ and $\B_{1,\e}$. The following corollary states immediate consequences of the previous lemma (we denote by~$\Rs(\cdot,\B_{\e})$ the resolvent of the operator~$\B_\e$):

\begin{cor}\label{cor:reghypo}
There holds:
\begin{enumerate}[leftmargin=0.7cm]
\item For any $i,j \in \{-1,0,1\}$, we have
	$$
	\|\mathcal{A}_\e\|_{\Y_{i}\to \Y_{j}} 
	\lesssim \frac{1}{\e^2}.
	$$
\item If $\nu>0$ is fixed, then for $\e$ small enough (in terms of $\nu_0$ and $\nu$) and $j=-1,0,1$, 
	$$
	\|\Rs(\lambda,\B_{\e})\|_{\Y_j\to \Y_{j}} \lesssim \frac{1}{\mathrm{Re}\,\lambda+\e^{-2}{\nu_0}}\lesssim \e^2, \qquad \forall \, \mathrm{Re}\,\lambda >-\nu\,.
	$$
\end{enumerate}
\end{cor}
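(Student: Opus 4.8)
The plan is to read off both bounds from Lemma~\ref{lem:reghypo}, exploiting two structural facts: the nesting $\Y_1 \hookrightarrow \Y_0 \hookrightarrow \Y_{-1}$ of the spaces in~\eqref{def:Yj} (more velocity derivatives and a heavier velocity weight yield a smaller space with a larger norm) and the elementary shift identity for resolvents. The factor $\e^{-2}$ in both estimates will come for free: in (i) because $\A_\e = \e^{-2}\A^{(\delta)}$ with $\A^{(\delta)}$ independent of $\e$, and in (ii) because the only $\e$-dependence relevant to the estimate enters through the scalar shift $\e^{-2}\nu_0$. The real content is therefore not a computation but the verification that every constant stays uniform in $\e$.

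For part (i) I would first reduce to the single worst case. Since $\A_\e=\e^{-2}\A^{(\delta)}$, it suffices to prove $\|\A^{(\delta)}\|_{\Y_i\to\Y_j}\lesssim 1$ uniformly in $\e$ and in $i,j\in\{-1,0,1\}$; using the nesting, I bound $\|g\|_{\Y_{-1}}\lesssim\|g\|_{\Y_i}$ on the input and $\|\A^{(\delta)}g\|_{\Y_j}\lesssim\|\A^{(\delta)}g\|_{\Y_1}$ on the output, so everything reduces to
$$
\|\A^{(\delta)}\|_{\Y_{-1}\to\Y_1}\lesssim 1 .
$$
To establish this I would use three features of $\A^{(\delta)}$ recorded in Lemma~\ref{lem:reghypo}(1). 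First, $\A^{(\delta)}$ acts only in velocity, hence commutes with $\partial_x^\beta$, so taking $|\beta|\leq\ell$ space derivatives and invoking~\eqref{eq:Adelta} with $k=s+1$ gives $\|\partial_x^\beta\A^{(\delta)}g\|_{\W^{s+1,2}_v}=\|\A^{(\delta)}\partial_x^\beta g\|_{\W^{s+1,2}_v}\lesssim\|\partial_x^\beta g\|_{L^1_v(\langle v\rangle)}$ pointwise in $x$. Second, $\A^{(\delta)}g$ is supported in the fixed ball $B(0,R_\delta)$, so the output weight $\langle v\rangle^{r+\kappa+2}$ contributes only the bounded factor $\langle R_\delta\rangle^{r+\kappa+2}$. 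Third, on the input the velocity norm $\|\cdot\|_{L^1_v(\langle v\rangle)}$ is controlled by $\|\cdot\|_{L^2_v(\langle v\rangle^{r-\kappa-2})}$ (hence by the $\Y_{-1}$ norm) through Cauchy--Schwarz, which is legitimate because the constraints on $r$ and $\kappa$ in~\eqref{def:Y} guarantee that $\langle v\rangle^{1-(r-\kappa-2)}$ is square integrable. Integrating in $x$ and summing over $\beta$ (Fubini/Minkowski) then yields the claim.

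For part (ii) I would set $A_\e:=\B_\e+\e^{-2}\nu_0$, which by Lemma~\ref{lem:reghypo}(2) is hypo-dissipative on each $\Y_j$, i.e.\ dissipative for an equivalent norm $\vertiii{\cdot}$ whose equivalence constants are $\e$-independent because the weights do not depend on $\e$. As $A_\e$ is the generator of a $C_0$-semigroup, dissipativity yields $\vertiii{\mathrm{e}^{tA_\e}}\leq 1$ and hence, via $\Rs(z,A_\e)=\int_0^\infty \mathrm{e}^{-zt}\mathrm{e}^{tA_\e}\,\d t$, the standard bound $\vertiii{\Rs(z,A_\e)}\leq(\mathrm{Re}\,z)^{-1}$ for all $\mathrm{Re}\,z>0$, so $\|\Rs(z,A_\e)\|_{\Y_j\to\Y_j}\lesssim(\mathrm{Re}\,z)^{-1}$ after returning to the original norm. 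The identity $\lambda-\B_\e=(\lambda+\e^{-2}\nu_0)-A_\e$ gives $\Rs(\lambda,\B_\e)=\Rs(\lambda+\e^{-2}\nu_0,A_\e)$, whence for $\mathrm{Re}\,\lambda>-\nu$,
$$
\|\Rs(\lambda,\B_\e)\|_{\Y_j\to\Y_j}\lesssim\frac{1}{\mathrm{Re}\,\lambda+\e^{-2}\nu_0}\,,
$$
provided $\mathrm{Re}\,\lambda+\e^{-2}\nu_0>0$, which holds once $\e^{-2}\nu_0>\nu$, i.e.\ for $\e$ small in terms of $\nu_0$ and $\nu$. Shrinking $\e$ slightly more so that $\e^{-2}\nu_0>2\nu$ gives $\mathrm{Re}\,\lambda+\e^{-2}\nu_0\geq\tfrac12\e^{-2}\nu_0$, hence the second bound $\lesssim\e^2$. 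The only genuinely delicate point in the whole argument is this uniformity in $\e$: I expect the main care is needed in checking that neither the constant $C_{k,\delta}$ in~\eqref{eq:Adelta} nor the norm-equivalence constants in the hypo-dissipativity of $A_\e$ degenerate as $\e\to0$, which is exactly what Lemma~\ref{lem:reghypo} is engineered to guarantee.
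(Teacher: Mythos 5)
Your proposal is correct and follows exactly the route the paper intends: the paper states Corollary~\ref{cor:reghypo} as an immediate consequence of Lemma~\ref{lem:reghypo}, and your argument fleshes out precisely that derivation --- item (1) from the scaling $\A_\e=\e^{-2}\A^{(\delta)}$ together with the compact support and regularization estimate~\eqref{eq:Adelta} plus Cauchy--Schwarz between the weighted spaces $\Y_{-1}$ and $\Y_1$, and item (2) from hypo-dissipativity of $\B_\e+\e^{-2}\nu_0$ via the standard contraction-semigroup resolvent bound and the resolvent shift identity. The two uniformity caveats you flag (the $\e$-independence of the equivalent-norm constants and of $C_{k,\delta}$) are indeed exactly what Lemma~\ref{lem:reghypo} and the paper's framework (cf.\ the remark before~\eqref{eq:dissipE1}) are designed to guarantee, so nothing is missing.
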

\smallskip
The second keypoint to develop our perturbative argument is to have a good understanding of the spectrum of the operator $\G_{1,\e}$. We here give some estimates on the associated resolvent that are a consequence of a result of decay of the associated semigroup (see\cite[Theorem~2.12]{ALT} which gives an improved version of~\cite[Theorem 2.1]{bmam}):

\begin{lem}\label{prop:resG1e} 
There exists $\e_1 \in (0,\e_0)$ such that for $j=-1,0,1$, for any $\lambda \in \C_{\mu_{\star}}^{\star}$ and any~$\e \in (0,\e_1)$,
	$$
	\|\Rs(\lambda,\G_{1,\e})\|_{\Y_j\to \Y_{j}}
	\lesssim \max\bigg(\frac{1}{|\lambda|},\frac{1}{\mathrm{Re}\lambda+\mu_{\star}}\bigg)
	$$
where $\mu_\star$ has been defined in~\eqref{eq:sgG1eps}. 
\end{lem}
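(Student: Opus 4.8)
The plan is to read off the resolvent bound from the uniform-in-$\e$ decay of the semigroup generated by $\G_{1,\e}$ in each of the spaces $\Y_j$, combined with the spectral picture near the origin recorded in~\eqref{eq:sgG1eps}. Denote by $\Pi_{0,\e}$ the spectral projection of $\G_{1,\e}$ onto the eigenspace associated with the isolated eigenvalue $0$, whose range is the generalized eigenspace containing $\operatorname{Span}\{\M,v_1\M,\dots,v_d\M,|v|^2\M\}$, and set $\Pi_{0,\e}^\perp:=\mathbf{Id}-\Pi_{0,\e}$. Since these are genuine eigenvectors (coming from the mass, momentum and energy conservation laws of the elastic operator), the eigenvalue $0$ is semisimple, so on the finite-dimensional range of $\Pi_{0,\e}$ the operator $\G_{1,\e}$ acts as $0$ and the resolvent is explicit,
$$
\Rs(\lambda,\G_{1,\e})\,\Pi_{0,\e}=\frac1\lambda\,\Pi_{0,\e},\qquad \lambda\in\C_{\mu_\star}^\star .
$$
Because $\Pi_{0,\e}$ and its range are bounded in $\Y_j$ uniformly in $\e$, this immediately yields $\|\Rs(\lambda,\G_{1,\e})\Pi_{0,\e}\|_{\Y_j\to\Y_j}\lesssim 1/|\lambda|$.

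For the complementary part I would invoke the uniform decay of the semigroup provided by~\cite[Theorem~2.12]{ALT} (the improved version of~\cite[Theorem~2.1]{bmam}): there exist $\e_1\in(0,\e_0)$ and a constant independent of $\e$ such that, for $\e\in(0,\e_1)$ and $j=-1,0,1$,
$$
\big\|e^{t\G_{1,\e}}\,\Pi_{0,\e}^\perp\big\|_{\Y_j\to\Y_j}\lesssim e^{-\mu_\star t},\qquad t\ge0 .
$$
For $\lambda\in\C_{\mu_\star}^\star$, that is $\mathrm{Re}\,\lambda>-\mu_\star$, the integral below converges absolutely, so the Laplace representation of the resolvent on the complementary subspace is valid and may be estimated under the integral sign:
$$
\Rs(\lambda,\G_{1,\e})\,\Pi_{0,\e}^\perp=\int_0^\infty e^{-\lambda t}\,e^{t\G_{1,\e}}\,\Pi_{0,\e}^\perp\,\d t,
$$
whence
$$
\big\|\Rs(\lambda,\G_{1,\e})\,\Pi_{0,\e}^\perp\big\|_{\Y_j\to\Y_j}
\lesssim\int_0^\infty e^{-(\mathrm{Re}\,\lambda+\mu_\star)t}\,\d t=\frac{1}{\mathrm{Re}\,\lambda+\mu_\star} .
$$
Adding the two contributions gives $\|\Rs(\lambda,\G_{1,\e})\|_{\Y_j\to\Y_j}\lesssim \frac1{|\lambda|}+\frac1{\mathrm{Re}\,\lambda+\mu_\star}\lesssim\max\big(\frac1{|\lambda|},\frac1{\mathrm{Re}\,\lambda+\mu_\star}\big)$, which is exactly the claim.

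I expect the genuine difficulty to lie entirely in the uniform-in-$\e$ semigroup decay and in the uniform boundedness of $\Pi_{0,\e}$ in the polynomially weighted, $L^2_{v,x}$-based spaces $\Y_j$, rather than in the elementary Laplace-transform step above. These spaces are strictly larger than the Gaussian-weighted Hilbert space $\H=\W^{m,2}_{x,v}(\M^{-1/2})$ in which the hypocoercivity estimate~\eqref{eq:hypocoerc} furnishes exponential decay at rate $\mu_\star$, so one cannot run the energy estimate directly in $\Y_j$. The remedy is the enlargement machinery of~\cite{GMM} applied to the factorisation $\G_{1,\e}=\A_\e+\B_{1,\e}$: the regularising bounds on $\A_\e$ and the hypo-dissipativity of $\B_{1,\e}$ from Lemma~\ref{lem:reghypo} and Corollary~\ref{cor:reghypo} allow one to transfer both the spectral gap and the semigroup decay from $\H$ to $\Y_j$ with constants uniform in $\e$. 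This transfer is precisely the content imported from~\cite[Theorem~2.12]{ALT}; once it is in place, the resolvent estimate follows as above.
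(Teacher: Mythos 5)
Your proposal is correct and follows essentially the same route as the paper: the lemma is precisely stated there as a consequence of the uniform-in-$\e$ semigroup decay of \cite[Theorem~2.12]{ALT} (the sharpened version of \cite[Theorem 2.1]{bmam}, itself obtained by the \cite{GMM} enlargement from the hypocoercivity estimate~\eqref{eq:hypocoerc}), and the passage to the resolvent is exactly your splitting along the semisimple eigenvalue $0$ --- whose $\e$-independent projector $\mathbf{P}_0$ gives the $1/|\lambda|$ contribution --- together with the Laplace-transform bound $1/(\mathrm{Re}\,\lambda+\mu_\star)$ on the complementary invariant subspace.
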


\smallskip
Let us now explain how we develop our perturbative argument to prove Theorem~\ref{theo:linear}. The following proposition (which is an adaptation of~\cite[Lemma 2.16]{Tr}) is the first step in the development of the perturbative argument and its proof relies on Corollary~\ref{cor:reghypo} and Lemma~\ref{prop:resG1e}.
\begin{prop}\label{lem:inverse} For all $\lambda \in \C_{\mu_{\star}}^{\star}$, let
	$$
	\mathcal{J}_{\e}(\lambda)=\left(\G_{\e} -\G_{1,\e}\right)\Rs(\lambda,\G_{1,\e})\A_{\e}\,\Rs(\lambda,\B_{\e}).
	$$
Then, for any $\mu \in (0,\mu_{\star})$ and $\lambda \in \C_{\mu}\setminus \mathbb{D}(\mu_{\star}-\mu)$, there exists $\e_2 \in (0,\e_1)$ such that for any~$\e \in (0,\e_2)$, 
	\begin{equation}\label{eq:Jalk}
	\left\|\mathcal{J}_{\e}(\lambda)\right\|_{\Y\to\Y} \lesssim \frac{1}{\mu_\star-\mu}\,\frac{1-\re(\e)}{\e^{2}}.
	\end{equation}

\smallskip
\noindent
In addition, there exists $\e_3 \in (0,\e_2)$ and $\lambda_3>0$ such that for $\e \in (0,\e_3)$ and $\lambda_0 \in [0,\lambda_3)$ (where $\lambda_0$ is defined in Assumption~\ref{hyp:re}), $\mathbf{Id}-\mathcal{J}_{\e}(\lambda)$ and $\lambda-\G_{\e}$ are invertible in $\Y$ with
	\begin{equation}\label{eq:reso}
	\Rs(\lambda,\G_{\e})=\Gamma_{\e}(\lambda)(\mathbf{Id}-\mathcal{J}_{\e}(\lambda))^{-1}, \qquad \lambda \in \C_{\mu} \setminus \mathbb{D}(\mu_{\star}-\mu),
	\end{equation}
where $\Gamma_{\e}(\lambda):=\Rs(\lambda,\B_{\e})+\Rs(\lambda,\G_{1,\e})\A_{\e}\,\Rs(\lambda,\B_{\e})$.  Finally, we have for $\e \in (0,\e_3)$, 
	\begin{equation}\label{eq:estimR}
	\|\Rs(\lambda,\G_{\e})\|_{\Y\to\Y} \lesssim \frac{1}{\mu_\star-\mu}, \qquad  \lambda \in \C_{\mu} \setminus \mathbb{D}(\mu_{\star}-\mu).
	\end{equation}
\end{prop}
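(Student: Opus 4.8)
The plan is to establish the bound~\eqref{eq:Jalk} by a careful bookkeeping of the powers of $\e$ across the scale of spaces $\Y_1 \hookrightarrow \Y_0 = \Y \hookrightarrow \Y_{-1}$, and then to deduce the invertibility statements and the resolvent factorization~\eqref{eq:reso} from a purely algebraic identity together with a Neumann series. The whole design of $\mathcal{J}_\e(\lambda)$ is that its four factors compose through these spaces so that the $\e^{-2}$ \emph{loss} carried by $\A_\e$ is exactly cancelled by the $\e^{2}$ \emph{gain} carried by $\Rs(\lambda,\B_\e)$, leaving only the smallness $1-\re(\e)$ carried by $\G_\e-\G_{1,\e}$.

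First I would prove~\eqref{eq:Jalk}. On the region $\lambda\in\C_\mu\setminus\mathbb{D}(\mu_\star-\mu)$ one has both $|\lambda|>\mu_\star-\mu$ and $\mathrm{Re}\,\lambda+\mu_\star>\mu_\star-\mu$, so these $\lambda$ lie in the resolvent sets of $\B_\e$ and $\G_{1,\e}$ for $\e$ small, and Lemma~\ref{prop:resG1e} gives $\|\Rs(\lambda,\G_{1,\e})\|_{\Y_1\to\Y_1}\lesssim(\mu_\star-\mu)^{-1}$. I would then read $\mathcal{J}_\e(\lambda)$ as the composition
\[
\Y \xrightarrow{\ \Rs(\lambda,\B_\e)\ } \Y \xrightarrow{\ \A_\e\ } \Y_1 \xrightarrow{\ \Rs(\lambda,\G_{1,\e})\ } \Y_1 \xrightarrow{\ \G_\e-\G_{1,\e}\ } \Y,
\]
and bound each arrow: $\|\Rs(\lambda,\B_\e)\|_{\Y\to\Y}\lesssim\e^2$ by Corollary~\ref{cor:reghypo}(2), $\|\A_\e\|_{\Y\to\Y_1}\lesssim\e^{-2}$ by Corollary~\ref{cor:reghypo}(1), $\|\Rs(\lambda,\G_{1,\e})\|_{\Y_1\to\Y_1}\lesssim(\mu_\star-\mu)^{-1}$ as above, and finally $\|\G_\e-\G_{1,\e}\|_{\Y_1\to\Y}\lesssim(1-\re(\e))/\e^2$ by~\eqref{eq:perturb}. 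Multiplying the four estimates, the factors $\e^2$ and $\e^{-2}$ cancel and leave exactly $(\mu_\star-\mu)^{-1}(1-\re(\e))\e^{-2}$, which is~\eqref{eq:Jalk}.

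Next I would treat the invertibility. Under Assumption~\ref{hyp:re} one has $(1-\re(\e))/\e^2=\lambda_0+\eta(\e)$, so~\eqref{eq:Jalk} reads $\|\mathcal{J}_\e(\lambda)\|_{\Y\to\Y}\lesssim(\lambda_0+\eta(\e))/(\mu_\star-\mu)$ uniformly in $\lambda$ on the region. Choosing $\e_3$ and $\lambda_3$ small enough (in terms of $\mu_\star-\mu$) forces $\|\mathcal{J}_\e(\lambda)\|_{\Y\to\Y}\leq 1/2$, so $\mathbf{Id}-\mathcal{J}_\e(\lambda)$ is invertible by Neumann series with $\|(\mathbf{Id}-\mathcal{J}_\e(\lambda))^{-1}\|_{\Y\to\Y}\leq 2$. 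For the resolvent formula I would verify the algebraic identity
\[
(\lambda-\G_\e)\,\Gamma_\e(\lambda)=\mathbf{Id}-\mathcal{J}_\e(\lambda),
\]
obtained by inserting $\lambda-\G_\e=(\lambda-\B_\e)-\A_\e$ in the first summand of $\Gamma_\e(\lambda)$ (which produces $\mathbf{Id}-\A_\e\Rs(\lambda,\B_\e)$) and $\lambda-\G_\e=(\lambda-\G_{1,\e})-(\G_\e-\G_{1,\e})$ in the second (which produces $\A_\e\Rs(\lambda,\B_\e)-\mathcal{J}_\e(\lambda)$), then adding. Since $\mathbf{Id}-\mathcal{J}_\e(\lambda)$ is invertible, $\Gamma_\e(\lambda)(\mathbf{Id}-\mathcal{J}_\e(\lambda))^{-1}$ is a right inverse of $\lambda-\G_\e$; that it is the genuine resolvent follows because for $\mathrm{Re}\,\lambda$ large this formula coincides with the standard resolvent of the generator $\G_\e=\A_\e+\B_\e$, and both sides are analytic on the connected set where $\mathbf{Id}-\mathcal{J}_\e(\lambda)$ is invertible, hence agree throughout by analytic continuation. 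Finally, estimating $\Gamma_\e(\lambda)$ by the same routing ($\|\Rs(\lambda,\B_\e)\|_{\Y\to\Y}\lesssim\e^2$ and $\|\Rs(\lambda,\G_{1,\e})\A_\e\Rs(\lambda,\B_\e)\|_{\Y\to\Y}\lesssim(\mu_\star-\mu)^{-1}$) gives $\|\Gamma_\e(\lambda)\|_{\Y\to\Y}\lesssim(\mu_\star-\mu)^{-1}$, and combining with the Neumann bound yields~\eqref{eq:estimR}.

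The main obstacle is the bookkeeping in the first step: the operator $\G_\e-\G_{1,\e}$ loses regularity and weight (it maps $\Y_1$ into $\Y_0$ only), so a naive composition within a single space would blow up as $\e\to0$. The remedy is to insert the regularizing operator $\A_\e$, whose gain from $\Y_0$ to $\Y_1$ restores precisely what is lost, and to arrange the product so that the $\e^{\pm2}$ powers telescope; this is exactly where the sharpness of~\eqref{eq:perturb} and of the $\e^2$--resolvent bound for $\B_\e$ is exploited, and it is what ultimately makes the scaling~\eqref{eq:scaling} (including the possibility $\lambda_0>0$) admissible. A secondary, more technical point is the passage from right inverse to genuine resolvent, handled by the analytic continuation argument indicated above.
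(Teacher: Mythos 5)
Your proof follows the paper's own route almost verbatim: the bound \eqref{eq:Jalk} is obtained exactly as in the paper, by composing $\Rs(\lambda,\B_\e)$ (gain $\e^{2}$, Corollary~\ref{cor:reghypo}), $\A_\e$ (loss $\e^{-2}$, with the regularity/weight gain from $\Y$ to $\Y_1$), $\Rs(\lambda,\G_{1,\e})$ on $\Y_1$ (Lemma~\ref{prop:resG1e}, giving the factor $(\mu_\star-\mu)^{-1}$ on $\C_\mu\setminus\mathbb{D}(\mu_\star-\mu)$), and the sharp perturbation estimate \eqref{eq:perturb}; the Neumann series, the algebraic identity $(\lambda-\G_\e)\Gamma_\e(\lambda)=\mathbf{Id}-\mathcal{J}_\e(\lambda)$, and the final bound on $\Gamma_\e(\lambda)$ yielding \eqref{eq:estimR} are all identical to the paper's argument.

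The one genuine divergence is the upgrade from right inverse to resolvent. The paper proves directly that $\lambda-\G_\e$ is one-to-one (up to reducing $\e_3$, using \eqref{eq:perturb} together with Lemmas~\ref{lem:reghypo} and~\ref{prop:resG1e}), whereas you argue by analytic continuation from the half-plane $\mathrm{Re}\,\lambda\gg1$, where $\G_\e=\A_\e+\B_\e$ is a bounded perturbation of the hypodissipative generator $\B_\e$ and hence its resolvent exists. This alternative is legitimate and even spares the extra smallness of $\e_3$ needed for the paper's injectivity step, but as stated it is slightly informal: one cannot invoke the identity theorem for ``both sides'' since $\Rs(\lambda,\G_\e)$ is not a priori defined on all of $\C_\mu\setminus\mathbb{D}(\mu_\star-\mu)$. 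The standard repair is to fix $h\in\D(\G_\e)$ and apply the identity theorem to the $\Y$-valued analytic map $\lambda\mapsto \Gamma_\e(\lambda)(\mathbf{Id}-\mathcal{J}_\e(\lambda))^{-1}(\lambda-\G_\e)h-h$, which vanishes for $\mathrm{Re}\,\lambda$ large and hence on the whole (connected) region, showing your formula is a two-sided inverse there; you should also record, as the paper does, that $\mathrm{Range}(\Gamma_\e(\lambda))\subset\D(\B_\e)=\D(\G_{1,\e})$ so that the algebraic identity is meaningful. With these two precisions your argument is complete.
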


\noindent {\it Sketch of the proof.} The estimate on $\mathcal{J}_{\e}(\lambda)$ can be easily deduced from~\eqref{eq:perturb}, Corollary~\ref{cor:reghypo} and Lemma~\ref{prop:resG1e}. First, fix $\mu \in (0,\mu_\star)$ and notice that from Corollary~\ref{cor:reghypo}, we clearly have that there exists $\e_2 \in (0,\e_1)$ (which depends on $\nu_0$ and $\mu$) such that for any $\mathrm{Re}\,\lambda >-\mu$, we have:
	$$
	\left\|\A_{\e}\,\Rs(\lambda,\mathcal{B}_{\e})\right\|_{\Y\to\Y_1} \lesssim 1. 
	$$ 
We can then deduce that for $\mu \in (0,\mu_{\star})$, for any $\mathrm{Re}\,\lambda >-\mu$, $|\lambda| \geq \mu_\star-\mu$,
	\begin{equation} \label{eq:Jel}
	\begin{split}
	\left\|\mathcal{J}_{\e}(\lambda)\right\|_{\Y\to\Y} 
	&\leq \left\|\G_{\e}-\G_{1,\e}\right\|_{\Y_{1}\to\Y}\,\|\Rs(\lambda,\G_{1,\e})\|_{\Y_{1}\to\Y_{1}}\,\left\|\A_{\e}\,\Rs(\lambda,\mathcal{B}_{\e})\right\|_{\Y\to\Y_{1}}\\
	&\leq C \, \frac{1-\re(\e)}{\e^{2}}\,\,\max\left(\frac{1}{|\lambda|},\,\frac{1}{\mathrm{Re}\lambda+\mu_{\star}}\right) \leq C  \,\frac{1-\re(\e)}{\e^{2}} \frac{1}{\mu_\star-\mu}
	\end{split}
	\end{equation}
for some $C>0$. Moreover, one can choose $\e_3 \in (0,\e_2)$ and $\lambda_3 >0$ depending on the difference $\chi = \mu_{\star}-\mu$, so that if $\lambda_0 \in [0,\lambda_3)$ (recall that $\lambda_0$ is defined in Assumption~\ref{hyp:re} and is such that $(1-\alpha(\e)) \e^{-2} \sim \lambda_0 + \eta(\e)$)
	\begin{equation}\label{eq:def-re}
	\rho(\e):=\frac{C}{\mu_\star-\mu}\,\frac{1-\re(\e)}{\e^{2}} < 1, 
	\qquad \forall \, \e \in (0,\e_3).
	\end{equation}
Under such an assumption, one sees that, for all $\lambda \in \C_{\mu}\setminus \mathbb{D}(\mu_{\star}-\mu)$,  $\mathbf{Id}-\mathcal{J}_{\e}(\lambda)$ is invertible in $\Y$  with 
	$$
	(\mathbf{Id}-\mathcal{J}_{\e}(\lambda))^{-1}=\sum_{p=0}^{\infty}\left[\mathcal{J}_{\e}(\lambda)\right]^{p}, 
	\qquad \forall \, \e \in (0,\e_3).
	$$
Let us fix then $\e \in (0,\e_3)$ and $\lambda \in \C_{\mu}\setminus \mathbb{D}(\mu_{\star}-\mu)$. The range of $\Gamma_{\e}(\lambda)$ is clearly included in $\D(\B_{\e})=\D(\G_{1,\e})$. Then, writing $\G_{\e}=\A_{\e}+\B_{\e}$, we easily get that
	$$
	(\lambda-\G_{\e})\Gamma_{\e}(\lambda)=\mathbf{Id}-\mathcal{J}_{\e}(\lambda)
	$$
i.e. $\Gamma_{\e}(\lambda)(\mathbf{Id}-\mathcal{J}_{\e}(\lambda))^{-1}$ is a right-inverse of $(\lambda-\G_{\e}).$ To prove that $\lambda-\G_{\e}$ is invertible, it is therefore enough to prove that it is one-to-one, which can be done up to reducing the value of $\e_3$ using~\eqref{eq:perturb}, Lemmas~\ref{lem:reghypo} and~\ref{prop:resG1e}. Thus, for $\e\in (0,\e_3)$, $\C_{\mu} \setminus \mathbb{D}(\mu_{\star}-\mu)$ is included into the resolvent set of $\G_{\e}$ and this shows \eqref{eq:reso}. To estimate now $\|\Rs(\lambda,\G_{\e})\|_{\mathscr{B}(\Y)}$, one simply notices that
	\begin{equation}\label{eq:I-Jel}
	\|(\mathbf{Id}-\mathcal{J}_{\e}(\lambda))^{-1}\|_{\Y\to\Y} \leq \sum_{p=0}^{\infty}\|\mathcal{J}_{\e}(\lambda)\|_{\Y\to\Y}^{p}\leq \frac{1}{1-\rho(\e)}, \quad \forall\, \lambda \in \C_{\mu}\setminus \mathbb{D}(\mu_{\star}-\mu)
	\end{equation}
from which, as soon as $\lambda \in \C_{\mu}\setminus\mathbb{D}(\mu_{\star}-\mu)$,
	$$
	\|\Rs(\lambda,\G_{\e})\|_{\Y\to\Y}\leq \frac{1}{1-\rho(\e)}\,\|\Gamma_{\e}(\lambda)\|_{\Y\to\Y}\,.
	$$
One checks, using the previous computations, that for $\lambda\in \C_{\mu}\setminus\mathbb{D}(\mu_{\star}-\mu)$,
	\begin{equation}\label{eq:normGe}
	\|\Gamma_{\e}(\lambda)\|_{\Y\to\Y} \lesssim \e^{2}+\|\A\|_{\Y\to\Y}\|\Rs(\lambda,\G_{1,\e})\|_{\Y\to\Y}
	\end{equation}
and deduces \eqref{eq:estimR}.  This achieves the proof.
\qed

A first obvious consequence of Proposition \ref{lem:inverse} is that, for any $\mu \in (0,\mu_{\star})$, there is $\e_3>0$ depending only on $\chi=\mu_{\star}-\mu$ such that
	$$
	\mathfrak{S}(\G_{\e}) \cap \C_\mu \subset \mathbb{D}(\mu_\star-\mu), \qquad \forall\, \e \in (0,\e_3).
	$$
We denote by $\mathbf{P}_{\e}$ (resp. $\mathbf{P}_0$) the spectral projection associated to the set
	$$
	\mathfrak{S}(\G_{\e}) \cap \C_{\mu}=\mathfrak{S}(\G_{\e}) \cap  \mathbb{D}(\mu_{\star}-\mu)
	\quad (\text{resp.} \quad \mathfrak{S}(\G_{1,\e}) \cap \C_{\mu} = \{0\}).
	$$

One can deduce then the following lemma whose proof is similar to \cite[Lemma 2.17]{Tr}. 
\begin{lem}\phantomsection\label{lem:PaP0} 
For any $\mu \in (0,\mu_\star)$ such that $\C_\mu \subset \mathbb{D}(\mu_\star-\mu)$, there exist $\e_4 \in (0,\e_3)$ and $\lambda_4 \in (0,\lambda_3)$ depending only on $\chi=\mu_{\star}-\mu$ such that if $\lambda_0 \in [0,\lambda_4)$ (where $\lambda_0$ is defined in Assumption~\ref{hyp:re}), 
	$$
	\left\|\mathbf{P}_{\e}-\mathbf{P}_{0}\right\|_{\Y\to\Y} < 1, \qquad \forall\, \e \in (0,\e_4).
	$$
In particular,
	\begin{equation}\label{eq:dim}
	\mathrm{dim}\,\mathrm{Range}(\mathbf{P}_{\e})=\mathrm{dim}\,\mathrm{Range}(\mathbf{P}_{0})=d+2, \qquad \forall\, \e \in (0,\e_4).
	\end{equation}
\end{lem}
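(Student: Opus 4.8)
The plan is to deduce Lemma~\ref{lem:PaP0} by controlling the difference of the two spectral projections through their Riesz integral representations. First I would write both projections as contour integrals over a common closed curve $\Gamma$ surrounding $\mathbb{D}(\mu_\star-\mu)$ and lying inside $\C_\mu$; concretely, for a suitable radius one takes
	$$
	\mathbf{P}_\e = \frac{1}{2\pi i}\oint_{\Gamma}\Rs(\lambda,\G_\e)\,\d\lambda,
	\qquad
	\mathbf{P}_0 = \frac{1}{2\pi i}\oint_{\Gamma}\Rs(\lambda,\G_{1,\e})\,\d\lambda,
	$$
where $\Gamma$ can be taken as $\partial\mathbb{D}(\mu_\star-\mu)$ (or a slightly enlarged circle), on which both resolvents are holomorphic by the previous results: $\Rs(\lambda,\G_{1,\e})$ because $0$ is the only element of $\mathfrak{S}(\G_{1,\e})\cap\C_\mu$ so $\Gamma$ encircles it, and $\Rs(\lambda,\G_\e)$ because Proposition~\ref{lem:inverse} shows $\C_\mu\setminus\mathbb{D}(\mu_\star-\mu)$ lies in the resolvent set. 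This is exactly the setup where the spectrum of each operator inside $\C_\mu$ is captured by the integral over $\Gamma$.

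Next I would estimate the difference directly from the integrands, using the identity
	$$
	\Rs(\lambda,\G_\e) - \Rs(\lambda,\G_{1,\e})
	= \Rs(\lambda,\G_\e)\,(\G_\e-\G_{1,\e})\,\Rs(\lambda,\G_{1,\e}).
	$$
Taking $\Y\to\Y$ operator norms and integrating over $\Gamma$ gives
	$$
	\left\|\mathbf{P}_\e-\mathbf{P}_0\right\|_{\Y\to\Y}
	\leq \frac{|\Gamma|}{2\pi}\sup_{\lambda\in\Gamma}
	\left\|\Rs(\lambda,\G_\e)\right\|_{\Y\to\Y}
	\left\|\G_\e-\G_{1,\e}\right\|_{\Y_1\to\Y}
	\left\|\Rs(\lambda,\G_{1,\e})\right\|_{\Y_1\to\Y_1}.
	$$
Here the resolvent of $\G_\e$ on $\Gamma$ is bounded by $C/(\mu_\star-\mu)$ thanks to~\eqref{eq:estimR}, the resolvent of $\G_{1,\e}$ is controlled on $\Gamma$ by Lemma~\ref{prop:resG1e} (with $|\lambda|\geq\mu_\star-\mu$ there), and the perturbation factor is bounded by $C(1-\re(\e))/\e^2 \lesssim \lambda_0+\eta(\e)$ via~\eqref{eq:perturb} together with Assumption~\ref{hyp:re}. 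The length $|\Gamma|$ depends only on $\chi=\mu_\star-\mu$. Combining these, the whole bound takes the shape $C(\chi)\,(\lambda_0+\eta(\e))$ for $\e$ small, so choosing $\e_4\in(0,\e_3)$ small (which makes $\eta(\e)$ small) and $\lambda_4\in(0,\lambda_3)$ small forces $\left\|\mathbf{P}_\e-\mathbf{P}_0\right\|_{\Y\to\Y}<1$ as soon as $\lambda_0\in[0,\lambda_4)$ and $\e\in(0,\e_4)$.

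Finally, the dimension statement~\eqref{eq:dim} follows from the standard fact that two projections at operator-norm distance strictly less than $1$ have the same rank: the strict inequality $\left\|\mathbf{P}_\e-\mathbf{P}_0\right\|<1$ implies that $\mathbf{P}_\e$ and $\mathbf{P}_0$ are similar (one can invoke the classical result that $\|\mathbf{P}-\mathbf{Q}\|<1$ for two bounded projections gives $\dim\mathrm{Range}(\mathbf{P})=\dim\mathrm{Range}(\mathbf{Q})$), and since $\mathbf{P}_0$ projects onto $\operatorname{Ker}\mathscr{L}_1$ whose dimension is $d+2$ by~\eqref{eq:sgG1eps}, we conclude $\dim\mathrm{Range}(\mathbf{P}_\e)=d+2$. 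The main obstacle, and the step requiring care, is verifying uniformity in $\e$ of all the constants: one must ensure that the contour $\Gamma$, the resolvent bound~\eqref{eq:estimR} for $\G_\e$, and the bound of Lemma~\ref{prop:resG1e} for $\G_{1,\e}$ hold simultaneously and with constants depending only on $\chi$ (not on $\e$), so that the smallness of the product is driven entirely by the perturbation factor $(1-\re(\e))/\e^2\sim\lambda_0+\eta(\e)$; this is precisely where the sharp scaling of Assumption~\ref{hyp:re} and the $\e$-uniform estimates established in Proposition~\ref{lem:inverse} are essential.
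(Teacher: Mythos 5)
There is a genuine gap, and it sits exactly at the point the paper flags as the main difficulty of the whole linear analysis: the perturbation $\G_{\e}-\G_{1,\e}$ is \emph{not} bounded on $\Y$. By \eqref{eq:perturb} it is only bounded from $\Y_{1}$ to $\Y$ (equivalently from $\Y$ to $\Y_{-1}$), losing a $v$-derivative and a weight $\langle v\rangle^{\kappa+2}$ in the sense of \eqref{def:Yj}. Your norm chain
$$
\left\|\Rs(\lambda,\G_\e)\right\|_{\Y\to\Y}\,
\left\|\G_\e-\G_{1,\e}\right\|_{\Y_1\to\Y}\,
\left\|\Rs(\lambda,\G_{1,\e})\right\|_{\Y_1\to\Y_1}
$$
is inconsistent: reading $\Rs(\lambda,\G_\e)\,(\G_\e-\G_{1,\e})\,\Rs(\lambda,\G_{1,\e})$ from right to left, the input is a generic element of $\Y$, so the factor $\Rs(\lambda,\G_{1,\e})$ can only be measured $\Y\to\Y$; its output then lies in $\Y$, not in $\Y_1$, and the middle factor cannot be applied with the $\Y_1\to\Y$ bound. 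What you would actually need is a bound of the type $\|\Rs(\lambda,\G_{1,\e})\|_{\Y\to\Y_1}$, i.e.\ a resolvent that \emph{gains} a derivative and a weight; Lemma~\ref{prop:resG1e} provides only $\Y_j\to\Y_j$ bounds, and no such gain is available (this is precisely the sense in which remark (ii) after Theorem~\ref{theo:linear} says the perturbation is not relatively bounded, so classical Kato-type perturbation of the resolvent identity fails). As written, your estimate controls $\mathbf{P}_\e-\mathbf{P}_0$ only as an operator from $\Y_1$ to $\Y$, which is not what the lemma asserts and is not enough for the rank comparison.

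The paper's proof is designed around this obstruction. Instead of the plain second resolvent identity, it uses the factorization from Proposition~\ref{lem:inverse}, $\Rs(\lambda,\G_\e)=\Gamma_\e(\lambda)(\mathbf{Id}-\mathcal{J}_\e(\lambda))^{-1}$, in which the smoothing factor $\A_\e\Rs(\lambda,\B_\e)$ (bounded $\Y\to\Y_1$ thanks to the regularizing properties of $\A^{(\delta)}$ in Lemma~\ref{lem:reghypo}) is systematically inserted \emph{before} any occurrence of the difference operator, so that each composition closes in the scale $\Y_{-1},\Y,\Y_1$. The difference of the projections then splits into a contour integral of $\Gamma_\e(\lambda)\mathcal{J}_\e(\lambda)(\mathbf{Id}-\mathcal{J}_\e(\lambda))^{-1}$ plus a second, genuinely distinct term $\Rs(\lambda,\G_{1,\e})\A_\e\left[\Rs(\lambda,\B_\e)-\Rs(\lambda,\B_{1,\e})\right]$, which is absent from your argument and is estimated through $\|\B_\e-\B_{1,\e}\|_{\Y\to\Y_{-1}}$ combined with $\|\A_\e\Rs(\lambda,\B_\e)\|_{\Y_{-1}\to\Y}$ — again using $\A_\e$ to reabsorb the weight loss. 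Your surrounding scaffolding is fine (common contour, holomorphy of both resolvents there, smallness driven by $(1-\re(\e))/\e^2\sim\lambda_0+\eta(\e)$, and the standard fact that two projections at distance $<1$ have equal rank, which is indeed the paper's final step via \cite[Paragraph~I.4.6]{kato}), but the central estimate must be rebuilt along the factorized route for the $\Y\to\Y$ bound to exist at all.
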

\noindent {\it Sketch of the proof.}  Let $\mu \in (0,\mu_\star)$ be close enough to $\mu_\star$ so that $\mathbb{D}(\mu_\star - \mu) \subset \C_\mu$ and $0 < r < \chi=\mu_{\star}-\mu$. One has $\mathbb{D}(r) \subset \C_{\mu}^{\star}$. We set $\gamma_{r}:=\{z \in \C\;;\;|z|=r\}$. Recall that by definition
	$$
	\mathbf{P}_{\e}:=\frac{1}{2i\pi}\oint_{\gamma_{r}}\Rs(\lambda,\G_{\e})\, \d\lambda, \qquad \mathbf{P}_{0}:=\frac{1}{2i\pi}\oint_{\gamma_{r}}\Rs(\lambda,\G_{1,\e})\, \d\lambda.
	$$
For $\lambda \in \gamma_{r}$, set
	$$
	\mathcal{Z}_{\e}(\lambda)=\Rs(\lambda,\G_{1,\e})\A_{\e}\Rs(\lambda,\B_{\e})
	$$
so that $\Gamma_{\e}(\lambda)=\Rs(\lambda,\B_{\e})+\mathcal{Z}_{\e}(\lambda)$. Recall from \eqref{eq:reso} that, for $\lambda \in \gamma_{r}$,
	\begin{multline*}
	\Rs(\lambda,\G_{\e})=\Rs(\lambda,\B_{\e})(\mathbf{Id}-\mathcal{J}_{\e}(\lambda))^{-1}+\mathcal{Z}_{\e}(\lambda)(\mathbf{Id}-\mathcal{J}_{\e}(\lambda))^{-1}\\
	=\Rs(\lambda,\B_{\e})+\Rs(\lambda,\B_{\e})\mathcal{J}_{\e}(\lambda)(\mathbf{Id}-\mathcal{J}_{\e}(\lambda))^{-1}+\mathcal{Z}_{\e}(\lambda)(\mathbf{Id}-\mathcal{J}_{\e}(\lambda))^{-1}
	\end{multline*}
where we wrote $(\mathbf{Id}-\mathcal{J}_{\e}(\lambda))^{-1}=\mathbf{Id}+\mathcal{J}_{\e}(\lambda)(\mathbf{Id}-\mathcal{J}_{\e}(\lambda))^{-1}$ to get the second equality. One also has
	$$
	\Rs(\lambda,\G_{1,\e})
	=\Rs(\lambda,\B_{1,\e})+\Rs(\lambda,\G_{1,\e})\A_{\e}\left[\Rs(\lambda,\B_{1,\e})-\Rs(\lambda,\B_{\e})\right]+\mathcal{Z}_{\e}(\lambda).
	$$
One can then obtain (see the proof of~\cite[Lemma~3.8]{ALT} for the details)
	\begin{equation*}
	\mathbf{P}_{\e}-\mathbf{P}_{0}=\frac{1}{2i\pi}\oint_{\gamma_{r}}\Gamma_{\e}(\lambda)\mathcal{J}_{\e}(\lambda)(\mathbf{Id}-\mathcal{J}_{\e}(\lambda))^{-1}\, \d\lambda
	+\frac{1}{2i\pi}\oint_{\gamma_{r}}\Rs(\lambda,\G_{1,\e})\A_{\e}\left[\Rs(\lambda,\B_{\e})-\Rs(\lambda,\B_{1,\e})\right]\, \d\lambda.
	\end{equation*}
The first part is estimated thanks to~\eqref{eq:Jel},~\eqref{eq:I-Jel} and~\eqref{eq:normGe} combined with Lemma~\ref{prop:resG1e}: 
	$$
	\|\Gamma_{\e}(\lambda)\mathcal{J}_{\e}(\lambda)(\mathbf{Id}-\mathcal{J}_{\e}(\lambda))^{-1}\|_{\Y\to\Y}
	\lesssim \frac{1}{r^2} \frac{1}{1-\rho(\e)} \frac{1-\re(\e)}{\e^2}. 
	$$
For the second part, notice first that from Lemma~\ref{prop:resG1e},
	\begin{equation*}
	\left\|\Rs(\lambda,\G_{1,\e})\A_{\e}\left[\Rs(\lambda,\B_{\e})-\Rs(\lambda,\B_{1,\e})\right]\right\|_{\Y\to\Y} 
	\lesssim \frac{1}{r}\,\left\|\A_{\e}\Rs(\lambda,\B_{\e})-\A_{\e}\Rs(\lambda,\B_{1,\e})\right\|_{\Y\to\Y}.
	\end{equation*}
Then, for $\lambda \in \gamma_{r}$, we have 
	$$
	\A_{\e}\Rs(\lambda,\B_{\e})-\A_{\e}\Rs(\lambda,\B_{1,\e})=\A_{\e}\Rs(\lambda,\B_{\e})\left[\B_{\e}-\B_{1,\e}\right]\Rs(\lambda,\B_{1,\e})
	$$
which implies that
	\begin{multline*}
	\left\|\A_{\e}\Rs(\lambda,\B_{\e})-\A_{\e}\Rs(\lambda,\B_{1,\e})\right\|_{\Y\to\Y}
	\leq \|\A_{\e}\Rs(\lambda,\B_{\e})\|_{\Y_{-1}\to\Y}\,\|\B_{\e}-\B_{1,\e}\|_{\Y\to\Y_{-1}}\,\|\Rs(\lambda,\B_{1,\e})\|_{\Y\to\Y} 
	\lesssim \frac{1-\re(\e)}{\e^2}. 
	\end{multline*}
Proceeding as in the proof of Lemma~\ref{lem:inverse}, one can 
conclude that for any $0<r<\chi=\mu_{\star}-\mu$,
	\begin{equation}\label{eq:PePo}
	\|\mathbf{P}_{\e}-\mathbf{P}_{0}\|_{\Y\to\Y} 
	\leq \frac{C}{r} \,\frac{1-\re(\e)}{\e^{2}} \left(\frac{1}{r (1-\rho(\e))} +1 \right):=\ell(\e).
	\end{equation}
Thanks to Assumption \ref{hyp:re}, one can find $\e_4$ and $\lambda_4$ depending only on $\chi$ such that $\ell(\e) < 1$ for any $\e \in (0,\e_4)$ and $\lambda_0 \in [0,\lambda_4)$. In particular, we deduce \eqref{eq:dim} from \cite[Paragraph~I.4.6]{kato}.
\qed

\medskip
With Lemma \ref{lem:PaP0}, we can now end the proof of Theorem \ref{theo:linear}.

\smallskip
\noindent {\it Sketch of the proof of Theorem \ref{theo:linear}.}
The structure of $\mathfrak{S}(\G_{\e}) \cap \C_{\mu}$ in the space $\Y$ comes directly from Lemma \ref{lem:PaP0} together with Proposition \ref{lem:inverse}. To describe more precisely the spectrum, one first remarks that
$$
\mathfrak{S}(\mathscr{L}_{\re(\e)}) \cap \C_\mu \subset \mathfrak{S}(\G_{\e})  \cap \C_\mu.
$$
This comes from the fact that for each eigenvalue of $\mathscr{L}_{\re(\e)}$, the eigenfunction depends only on $v$ and thus remains an eigenfunction for the operator $\G_\e$. 
Since, for $\e$ small enough, the same perturbative argument that we developed above implies that the spectral projection~$\Pi_{\mathscr{L}_{\re(\e)}}$ associated to $\mathfrak{S}(\mathscr{L}_\re) \cap \C_{\mu}$ satisfies
$$\mathrm{dim(Range}(\Pi_{\mathscr{L}_{\re(\e)}}))=\mathrm{dim(Range}(\Pi_{\mathscr{L}_{1}}))=d+2=\mathrm{dim(Range}(\mathbf{P}_{\e})),$$
we get that
\begin{equation}\label{eq:spectr=}
\mathfrak{S}(\mathscr{L}_{\re(\e)}) \cap \C_{\mu} = \mathfrak{S}(\G_{\e})  \cap \C_{\mu}\,,
\end{equation}
that is, the eigenvalues $\lambda_{j}(\e)$ are actually eigenvalues of~$\mathscr{L}_{\re(\e)}$. The development of the energy eigenvalue $\lambda_{d+2}(\e)$ comes from~\cite{MiMo3}. The conservation of mass gives us that $0$ is an eigenvalue for our problem. The intermediate eigenvalues $\lambda_j(\e)$ for $j=2, \dots, d+1$ are obtained thanks to the fact that
$$
\int_{ \R^{d}}\mathscr{L}_{\re(\e)} \varphi(v)\,v_{i}\,\d v =-\frac{1-\re(\e)}{\e^2} \int_{\R^{d}}v_{i}\nabla \cdot (v\varphi(v))\,\d v=\frac{1-\re(\e)}{\e^2}\int_{\R^{d}}v_{i}\,\varphi(v)\,\d v.
$$
Notice that all this allows us to find eigenfunctions (that depend only on $v$) in $L^2_{v,x}(\langle v \rangle^r)$. Using once more the splitting $\mathscr{L}_{\re}=\A^{(\delta)}+\B_{\alpha}^{\delta}$ defined in~\eqref{eq:splitLalpha} and the regularizing properties of $\A^{(\delta)}$, one can actually prove that our eigenfunctions lie in $\Y$, which yields the conclusion of Theorem~\ref{theo:linear} in the space $\Y$. To extend the result to the space $\E$, we use an enlargement argument coming from~\cite{GMM}, we omit the details here and just mention that this argument is based on the splitting $\G_\e = \A_\e+\B_\e$ introduced in~\eqref{eq:splitGeps}.  
\qed

\section{Study of the kinetic nonlinear problem} \label{sec:nonlinear}

Let us recall that the spaces~$\E$ and~$\E_1$ are defined in~\eqref{def:E-E1}. In this section, we assume that Assumption~\ref{hyp:re} is met and consider~$\e \in (0,\overline\e)$, $\lambda_0 \in \big[0,\overline \lambda\big]$ where~$\overline\e$ and $\overline \lambda$ are defined in Theorem~\ref{theo:linear}. As in Section~\ref{sec:linear}, to lighten the notations, we write~$\G_\e = \G_{\re(\e),\e}$ as well as~$\B_\e=\B_{\re(\e),\e}$. 

\subsection{Splitting of the nonlinear inelastic Boltzmann equation}
Now that the spectral analysis of the linearized operator~$\G_\e$ in the space $\E$ has been performed, in order to prove Theorem~\ref{theo:main-cauchy}, we are going to prove several \emph{a priori estimates} for the solutions to \eqref{eq:BE}. The crucial point in the analysis lies in the splitting of~\eqref{eq:BE} into a system of two equations mimicking a spectral enlargement method from a PDE perspective {(see \cite[Section 2.3]{MiMoFP} and \cite{bmam} for pioneering ideas on such a method)}. 
More precisely, using~\eqref{eq:splitGeps}, the splitting amounts to look for a solution of \eqref{eq:BE} of the form 
	$$
	h_{\e}(t)=\ho_{\e}(t)+\hu_{\e}(t)
	$$
with $\ho_{\e}$ solution to 
	\begin{equation}\label{eq:h0}
	\hspace{-.4cm}\left\{\begin{array}{ccl}
	\partial_{t} \ho_\e&=&\!\!\!\B_{\e}\ho_\e + \frac1\e\Q_{\re(\e)}(\ho_\e,\ho_\e) + \frac1\e\Big[\Q_{{\re(\e)}}(\ho_\e,\hu_\e)+\Q_{{\re(\e)}}(\hu_\e,\ho_\e)\Big] \\ [10pt]
	&& + {\Big[\G_{\e}\hu_\e-\G_{1,\e}\hu_\e\Big] + \frac1\e\Big[\Q_{{\re(\e)}}(\hu_\e,\hu_\e)-\Q_{1}(\hu_\e,\hu_\e)\Big]},\\ [10pt]
	\ho_\e(0,x,v)&=&\!\!\!h_{\e}^{\mathrm{\mathrm{in}}}(x,v) \in \E,
	\end{array}\right.
	\end{equation}
and $\hu_\e$ solution to 
	\begin{equation}\label{eq:h1}
	\left\{
	\begin{array}{ccl}
	\partial_{t} \hu_\e&=& {\G_{1,\e}\hu_\e} + \frac1\e\Q_{1}(\hu_\e,\hu_\e) + \A_\e \ho_\e,\\[10pt]
	\hu_\e(0,x,v)&=&0.
	\end{array}\right.
	\end{equation}
In order to lighten the notations, in this section, we will write~$h^{\mathrm{in}}$,~$h$,~$\ho$ and~$\hu$ instead of~$h^{\mathrm{in}}_\e$,~$h_\e$,~$\ho_\e$ and~$\hu_\e$. 
The goal is to obtain nice nested a priori estimates on $\ho$ and~$\hu$. Notice first that our splitting is more complicated than the one of~\cite{bmam} because it relies on perturbative considerations around the elastic case that come out in the equation satisfied by $\ho$. As a consequence, our a priori estimates are more intricate and require the use of non standard Gronwall lemma. Notice also that since the initial datum of~$\hu$ is vanishing, we can study the equation on~$\hu$ in any functional space. In particular, we can study it in the Hilbert space~$\H = \W_{x,v}^{m,2}\left(\M^{-1/2}\right)$
in which we have a good understanding of the elastic linearized operator~$\G_{1,\e}$. Indeed, in this type of spaces, the symmetries of the collision operator $\Q_1$ allow to get some nice hypocoercive estimates (see~\eqref{eq:hypocoerc}).

\begin{nb}
In~\cite{GMM}, the authors treat the elastic case ($\alpha=1$) of the non-rescaled equation~($\e=1$) and they do not resort to such a splitting method to study the nonlinear equation, their approach is based on the use of a norm which is equivalent to the usual one and is such that~$\G_{1,1}$ is dissipative in this norm in large spaces. Such an approach is no longer usable when one wants to deal with rescaled equations and obtain uniform in $\e$ estimates. Indeed, the definition of the equivalent norm in~\cite{GMM} does not take into account the different behaviors of microscopic and macroscopic parts of the solution with respect to~$\e$: typically, the microscopic part of the solution vanishes as $\e \to 0$ whereas the macroscopic one does not. Conversely, in the splitting method, the equation that defines $\hu$ is treated thanks to hypocoercivity tricks that allow to distinguish microscopic and macroscopic behaviors. 
\end{nb}

\subsection{Estimating $\ho$}
Concerning $\ho$, let us first mention that the dissipativity properties of~$\B_{\e}$ stated in Lemma~\ref{lem:reghypo} can actually be improved a bit. More precisely, one can show that there exist norms on the spaces $\E$ and $\E_1$ that are equivalent to the standard ones (with multiplicative constants independent of $\e$) that we still denote $\|\cdot\|_{\E}$ and $\|\cdot\|_{\E_1}$ and that satisfy:
	\begin{equation} \label{eq:dissipE1}
	\frac{\d}{\d t}\|S_{\B_{\e}}(t)g\|_{\E} \leq -\frac{\nu_{0}}{\e^2}\|S_{\B_{\e}}(t)g\|_{\E_{1}}
	\end{equation}
where we have denoted by $\left(S_{\B_{\e}}(t)\right)_{t\geq0}$ the semigroup generated by~$\B_\e$ and $\nu_0$ is defined in Lemma~\ref{lem:reghypo}. 
Let us also introduce the Banach space $\mathcal{E}_2$
	$$
	\E_{2}:={\W^{k+1,2}_{v}\W^{m,2}_{x}(\m_{q+{2\kappa}+2}), \quad \kappa > \frac{d}{2}}
	$$ 
which satisfies the following continuous embeddings: $\H \hookrightarrow \mathcal{E}_2 \hookrightarrow \mathcal{E}_1$  (recall that $\E_1$ is defined in~\eqref{def:E-E1}). Let us point out that the spaces $\E_1$ and $\E_2$ allow us to get the following estimates (see~\cite[Remark~3.5]{ALT} and~\cite{ACG,AG}):
	\begin{equation} \label{eq:bilinear}
	\|(\Q_\re-\Q_1)(g,f)\|_\E \lesssim (1-\re) \|g\|_{\E_2} \|f\|_{\mathcal{E}_2} \quad
	 \text{and} \quad 
	\|\Q_\re(g,f)\|_{\mathcal{E}} \lesssim \|g\|_\E \|f\|_{\E_1} + \|g\|_{\E_1} \|f\|_\E
	\end{equation}
where the multiplicative constants are uniform in $\re$.
One can then obtain the following proposition: 
\begin{prop}\label{prop:h0} 
Assume that {$\ho\in \E$, {$\hu \in \H$}} are such that
	\begin{equation*}
	\sup_{t\geq0}\big(\|\ho (t)\|_{\E}  + \|\hu(t)\|_{{{\H}}} \big) \leq \Delta_0 <\infty.
	\end{equation*}
For $\nu \in (0,\nu_0)$ (where $\nu_0$ is defined in Lemma~\ref{lem:reghypo}), there exists an explicit $\e_5 \in (0,\overline \e)$ (where~$\overline \e$ is defined in Theorem~\ref{theo:linear}) such that:
\begin{equation} \label{ineq:ho}
	\begin{split}
	\|\ho(t)\|_{\E} \lesssim 
	\|h^{\mathrm{in}}\|_{\E}\,e^{-\frac{\nu}{\e^2}t} +  {\lambda}_{\e} \int^{t}_{0}e^{-\frac{\nu}{\e^2}(t-s)}\|\hu(s)\|_{\H}\,\d s
	\end{split}
\end{equation}
where we recall that $\lambda_\e \underset{\e\to 0}{\sim} \frac{1-\re(\e)}{\e^2}$ is defined in Theorem~\ref{theo:linear}. 
\end{prop}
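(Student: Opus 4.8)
The plan is to read \eqref{eq:h0} as an inhomogeneous evolution $\partial_t\ho=\B_\e\ho+\mathcal N$ for the semigroup $S_{\B_\e}$, with source
$$
\mathcal N:=\tfrac1\e\Q_{\re}(\ho,\ho)+\tfrac1\e\big[\Q_{\re}(\ho,\hu)+\Q_{\re}(\hu,\ho)\big]+\big[\G_\e\hu-\G_{1,\e}\hu\big]+\tfrac1\e\big[\Q_{\re}(\hu,\hu)-\Q_1(\hu,\hu)\big],
$$
treating $\hu$ as a \emph{given} forcing subject only to $\|\hu(t)\|_\H\leq\Delta_0$. Reading the enhanced dissipativity \eqref{eq:dissipE1} at the level of the generator $\B_\e$ (comparing, through a short-time Duhamel formula, the Dini derivative of $\|\ho(t)\|_\E$ with that of $\|S_{\B_\e}(h)\ho(t)\|_\E$ at $h=0^+$), I would first derive the differential inequality
$$
\frac{\d}{\d t}\|\ho(t)\|_\E\leq-\frac{\nu_0}{\e^2}\|\ho(t)\|_{\E_1}+\|\mathcal N(t)\|_\E .
$$
Everything then comes down to splitting $\|\mathcal N\|_\E$ into the terms containing $\ho$, to be absorbed by the dissipation $\tfrac{\nu_0}{\e^2}\|\ho\|_{\E_1}$, and the two terms built purely from $\hu$, which should produce the source $\lambda_\e\|\hu\|_\H$.

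For the $\ho$-terms I would invoke the bilinear estimate \eqref{eq:bilinear}, the embeddings $\H\hookrightarrow\E_2\hookrightarrow\E_1\hookrightarrow\E$ and the elementary bound $\|\cdot\|_\E\leq\|\cdot\|_{\E_1}$ (the weight of $\E_1$ dominates that of $\E$). The quadratic term is immediate: $\tfrac1\e\|\Q_\re(\ho,\ho)\|_\E\lesssim\tfrac1\e\|\ho\|_\E\|\ho\|_{\E_1}\leq\tfrac{\Delta_0}\e\|\ho\|_{\E_1}$. For the cross terms, \eqref{eq:bilinear} gives $\|\Q_\re(\ho,\hu)\|_\E+\|\Q_\re(\hu,\ho)\|_\E\lesssim\|\ho\|_\E\|\hu\|_{\E_1}+\|\ho\|_{\E_1}\|\hu\|_\E$; bounding $\|\ho\|_\E\leq\|\ho\|_{\E_1}$ and $\|\hu\|_{\E_1}+\|\hu\|_\E\lesssim\|\hu\|_\H\leq\Delta_0$ turns this into $\lesssim\tfrac{\Delta_0}\e\|\ho\|_{\E_1}$ as well. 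Thus the $\ho$-terms contribute at most $\tfrac{C\Delta_0}\e\|\ho\|_{\E_1}$.

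The two remaining terms are genuinely $\mathrm O(\lambda_\e)$. Since the transport parts cancel, $\G_\e\hu-\G_{1,\e}\hu=\e^{-2}(\mathscr L_{\re}-\mathscr L_1)\hu$, and the $\E$-analogue of \eqref{eq:perturb} — which follows from \eqref{eq:bilinear} together with the bound $\|G_\re-\M\|\lesssim(1-\re)$ (in the relevant weighted norm) coming from \eqref{eq:GretoM} — yields $\|(\mathscr L_\re-\mathscr L_1)\hu\|_\E\lesssim(1-\re)\|\hu\|_{\E_2}$; using $\H\hookrightarrow\E_2$ and $\tfrac{1-\re}{\e^2}\sim\lambda_\e$ this gives $\|\G_\e\hu-\G_{1,\e}\hu\|_\E\lesssim\lambda_\e\|\hu\|_\H$. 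For the last term, \eqref{eq:bilinear} gives $\tfrac1\e\|(\Q_\re-\Q_1)(\hu,\hu)\|_\E\lesssim\tfrac{1-\re}\e\|\hu\|_{\E_2}^2\lesssim\e\lambda_\e\Delta_0\|\hu\|_\H\lesssim\lambda_\e\|\hu\|_\H$ for $\e$ small, the spare factor $\e$ coming from $\tfrac{1-\re}\e=\e\cdot\tfrac{1-\re}{\e^2}$ and one power of $\|\hu\|_\H$ being absorbed into $\Delta_0$.

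It then remains to combine the estimates and to choose $\e_5$ so that $\tfrac{C\Delta_0}\e\leq\tfrac{\nu_0-\nu}{\e^2}$ for $\e\in(0,\e_5)$; the $\ho$-terms are then swallowed by the dissipation and, using $\|\ho\|_\E\leq\|\ho\|_{\E_1}$ once more, one is left with $\tfrac{\d}{\d t}\|\ho\|_\E\leq-\tfrac{\nu}{\e^2}\|\ho\|_\E+C\lambda_\e\|\hu\|_\H$, which integrates to \eqref{ineq:ho}. I expect the only delicate point to be the cross terms $\tfrac1\e\Q_\re(\ho,\hu)$: bounding $\|\ho\|_\E$ by $\Delta_0$ there would leave an uncontrollable $\tfrac{\Delta_0}\e\|\hu\|_\H$ source, and it is essential to spend instead the weight gap between $\E$ and $\E_1$ so as to route these terms into the dissipation — which is exactly why the statement features a threshold $\e_5$ depending on $\Delta_0$ and $\nu_0-\nu$ rather than a smallness assumption on $\Delta_0$. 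The rest is routine bookkeeping of powers of $\e$, granted the analytic inputs \eqref{eq:dissipE1}, \eqref{eq:bilinear} and \eqref{eq:GretoM}.
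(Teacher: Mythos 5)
Your proof follows essentially the same route as the paper's: the generator-level form of \eqref{eq:dissipE1} combined with the bilinear bounds \eqref{eq:bilinear} and the embeddings $\H\hookrightarrow\E_2\hookrightarrow\E_1$ reproduces exactly the paper's differential inequality \eqref{eq:inegho}, with the $\ho$-terms absorbed into the dissipation under the same smallness condition $C\,\e_5\,\Delta_0\leq\nu_0-\nu$, and the quadratic $\hu$-source absorbed via $\e_5\,\Delta_0\leq1$, just as in the paper. The only point you state too quickly is the bound $\|(\mathscr{L}_{\re}-\mathscr{L}_{1})\hu\|_{\E}\lesssim(1-\re)\|\hu\|_{\E_2}$: besides \eqref{eq:bilinear} and a version of \eqref{eq:GretoM} in the stronger $\E_1$/$\E_2$-type norms (see \cite[Lemma~2.3]{ALT}, not just $L^1_v(\langle v\rangle^2)$), it also requires the elementary estimate of the drift difference $(1-\re)\nabla_v\cdot(v\hu)$, which uses precisely the extra $v$-derivative and weight built into $\E_2$ --- these are the inputs behind the term $C\,\frac{1-\re(\e)}{\e^2}\,\|\hu(t)\|_{\E_2}$ in \eqref{eq:inegho}.
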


\noindent {\it Sketch of the proof.} 
Using~\eqref{eq:dissipE1} as well as~\eqref{eq:bilinear} and recalling that $\ho$ solves~\eqref{eq:h0}, we can compute the evolution of $\|\ho(t)\|_\E$ and estimate it:
	\begin{multline} \label{eq:inegho}
	\frac{\d}{\d t} \|\ho(t)\|_{\mathcal{E}} 
	\leq -\frac{\nu_0}{\e^2} \|\ho(t)\|_{\mathcal{E}_1} + \frac{C}{\e} \left(\|\ho(t)\|_{\mathcal{E}} + \|\hu(t)\|_{\mathcal{E}_1}\right) \|\ho(t)\|_{\mathcal{E}_1} \\
	+ C \, \frac{1-\re(\e)}{\e^2} \|\hu(t)\|_{\mathcal{E}_2} + C \, \frac{1-\re(\e)}{\e} \|\hu(t)\|^2_{\mathcal{E}_2}.
	\end{multline}
Using that the embedding $\E_2 \hookrightarrow \H$ is continuous, recalling that $h^0(0)=h^{\mathrm{in}}$ and choosing~$\e_5$ small enough so that~$C \, \e_5 \, \Delta_0 \leq \nu_0-\nu$, we obtain 
\begin{equation*}
	\begin{split}
	\|\ho(t)\|_{\E} \lesssim 
	\|h^{\mathrm{in}}\|_{\E}\,e^{-\frac{\nu}{\e^2}t} +  {\lambda}_{\e} \int^{t}_{0}e^{-\frac{\nu}{\e^2}(t-s)}\|\hu(s)\|_{\H}\,\d s
	+\e\, {\lambda}_{\e} \int^{t}_{0}e^{-\frac{\nu}{\e^2}(t-s)}\|\hu(s)\|^2_{\H}\,\d s.
	\end{split}
\end{equation*}
We conclude to~\eqref{ineq:ho} by assuming furthermore that $\e_5 \Delta_0 \leq 1$.  
\qed 

\subsection{Estimating $\hu$}
We now comment and study the equation satisfied by~$\hu$. 
Let us point out that getting estimates on $\hu$ is trickier than in~\cite{bmam}, indeed, in the latter paper, the idea is to estimate separately $\mathbf{P}_0 h^1$ and~$(\mathbf{Id} - \mathbf{P}_0) \hu$ where $\mathbf{P}_{0}$ is the projector onto $\operatorname{Ker} (\G_{1,\e})$ defined by 
	\begin{equation}\label{eq:P0}
	\mathbf{P}_{0}g:=\sum_{i=1}^{d+2}\left(\int_{\T^{d}\times\R^{d}}g\,\Psi_{i}\, \d v\,\d x \right)\,\Psi_{i}\,\M
	\end{equation}
where the functions $\Psi_i$ have been defined in~\eqref{def:Psii},
and thanks to the properties of preservation of mass, momentum and energy of the whole equation, one could write that $\mathbf{P}_{0} h =0$ so that $\mathbf{P}_{0} \hu = - \mathbf{P}_{0} \ho$ and directly get an estimate on~$\mathbf{P}_{0} \hu$ from the one on~$\ho$. In our case, the energy is no longer preserved which induces additional difficulties. However, we keep the same strategy and start by estimating~$\mathbf{P}_0 \hu$ (see Remark~\ref{nb:BMM} for a comment on this choice of strategy). 

For the sequel, we also introduce 
	\begin{equation}\label{eq:PP0}
	\mathbb{P}_{0}h=\sum_{i=1}^{d+1}\left(\int_{\T^{d}\times\R^{d}}h\,\Psi_{i}\,\d v\, \d x\right)\,\Psi_{i}\,\M\,, 
	\quad 
	\Pi_{0}h=\left(\int_{\T^{d}\times\R^{d}}h\Psi_{d+2}\,\d v\, \d x\right)\,\Psi_{d+2}\,\M.
	\end{equation}
Notice that thanks to Cauchy-Schwarz inequality in velocity, one can easily prove that we have $\mathbb{P}_0 \in \mathscr{B}(\E,\H)$. 
One can then obtain the following proposition:
\begin{prop} \label{prop:P0h1}
Assume that {$\ho\in \E$, {$\hu \in \H$}} are such that
	\begin{equation*}
	\sup_{t\geq0}\big(\|\ho (t)\|_{\E}  + \|\hu(t)\|_{{{\H}}} \big) \leq \Delta_0 <\infty.
	\end{equation*}
For $\e \in (0,\e_5)$ ($\e_5$ is defined in Proposition~\ref{prop:h0}), 
	\begin{multline} \label{eq:P0h1}
	\|\mathbf{P}_0 h^1(t)\|_\E \lesssim
	\|h^0(t)\|_\E + \|h^{\mathrm{in}}\|_\E \, e^{-\overline \lambda_\e t} 
	+ \lambda_\e \int_0^t e^{-\overline \lambda_\e(t-s)} \left(\|\ho(s)\|_\E + \|(\mathbf{Id} - \mathbf{P}_0)\hu(s)\|_\H \right)  \, \d s \\
	+ \e \lambda_\e \int_0^t e^{-\overline \lambda_\e(t-s)}  \|\hu(s)\|_\H  \, \d s 
	\end{multline}
where $\overline \lambda_\e = \lambda_\e + \operatorname{O}(1-\re(\e))$ with $\lambda_\e \underset{\e\to 0}{\sim} \frac{1-\re(\e)}{\e^2}$ defined in Theorem~\ref{theo:linear}. 
\end{prop}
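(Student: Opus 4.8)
The plan is to split $\mathbf{P}_0 = \mathbb{P}_0 + \Pi_0$ as in~\eqref{eq:PP0} and to exploit the conservation laws of the \emph{full} equation~\eqref{eq:BE} rather than the $h^1$-equation alone. Since $h = h^0 + h^1$ solves~\eqref{eq:BE} and this equation preserves mass and vanishing momentum, \eqref{eq:conservh} gives $\mathbb{P}_0 h = 0$, whence $\mathbf{P}_0 h = \Pi_0 h$ and therefore
\begin{equation*}
\mathbf{P}_0 h^1 = \mathbf{P}_0 h - \mathbf{P}_0 h^0 = \Pi_0 h - \mathbf{P}_0 h^0.
\end{equation*}
Because the range of $\mathbf{P}_0$ is the finite-dimensional space $\operatorname{Span}\{\Psi_i\M\}$, one has $\|\mathbf{P}_0 h^0\|_\E\lesssim\|h^0(t)\|_\E$ (this produces the leading $\|h^0(t)\|_\E$ term), so the whole matter reduces to controlling the scalar energy coefficient
\begin{equation*}
e(t) := \int_{\T^d\times\R^d} h(t,x,v)\,\Psi_{d+2}(v)\,\d v\,\d x, \qquad \|\Pi_0 h(t)\|_\E\lesssim|e(t)|.
\end{equation*}

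Next I would derive a closed scalar ODE for $e(t)$ by testing~\eqref{eq:BE} against $\Psi_{d+2}$. The transport term drops by periodicity in $x$, and since the elastic operators conserve energy, $\int\mathscr{L}_1 h\,\Psi_{d+2}\,\d v = 0$ and $\int\Q_1(h,h)\,\Psi_{d+2}\,\d v = 0$; hence only the \emph{inelastic defects} survive:
\begin{equation*}
\frac{\d}{\d t}e(t) = \frac{1}{\e^2}\int_{\T^d\times\R^d}\big(\mathscr{L}_{\re}-\mathscr{L}_1\big)h\,\Psi_{d+2}\,\d v\,\d x + \frac{1}{\e}\int_{\T^d\times\R^d}\big(\Q_{\re}-\Q_1\big)(h,h)\,\Psi_{d+2}\,\d v\,\d x.
\end{equation*}
Both defects are of order $1-\re$, and since $\tfrac{1-\re}{\e^2}\sim\lambda_\e$ and $\tfrac{1-\re}{\e}=\e\,\tfrac{1-\re}{\e^2}\sim\e\lambda_\e$, the two integrals carry the prefactors $\lambda_\e$ and $\e\lambda_\e$ appearing in~\eqref{eq:P0h1}.

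The decisive step is to extract from the first integral the diagonal dissipation. Writing $h = e\,\Psi_{d+2}\M + (\mathbf{Id}-\mathbf{P}_0)h$ (the mass and momentum modes of $h$ vanish by~\eqref{eq:conservh}), I would set
\begin{equation*}
\overline\lambda_\e := -\frac{1}{\e^2}\int_{\R^d}\big(\mathscr{L}_{\re}-\mathscr{L}_1\big)\big(\Psi_{d+2}\M\big)\,\Psi_{d+2}\,\d v,
\end{equation*}
which is exactly the negative of the energy matrix element; by Theorem~\ref{theo:linear} and~\cite{MiMo3} it satisfies $\overline\lambda_\e = \lambda_\e + \operatorname{O}(1-\re) > 0$ for $\e,\lambda_0$ small. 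The ODE then reads $\tfrac{\d}{\d t}e = -\overline\lambda_\e\,e + \mathcal{S}(t)$ with source
\begin{equation*}
\mathcal{S}(t) = \frac{1}{\e^2}\int\big(\mathscr{L}_{\re}-\mathscr{L}_1\big)(\mathbf{Id}-\mathbf{P}_0)h\,\Psi_{d+2}\,\d v\,\d x + \frac{1}{\e}\int\big(\Q_{\re}-\Q_1\big)(h,h)\,\Psi_{d+2}\,\d v\,\d x.
\end{equation*}
Using~\eqref{eq:perturb} and~\eqref{eq:bilinear} (pairing against the Schwartz function $\Psi_{d+2}$, so a weak/dual norm suffices) together with $(\mathbf{Id}-\mathbf{P}_0)h = (\mathbf{Id}-\mathbf{P}_0)h^0 + (\mathbf{Id}-\mathbf{P}_0)h^1$, the first term is bounded by $\lambda_\e\big(\|h^0\|_\E + \|(\mathbf{Id}-\mathbf{P}_0)h^1\|_\H\big)$; the nonlinear term is bounded by $\e\lambda_\e\|h\|^2$, and invoking the a~priori bound $\Delta_0$ with $\|h\|\lesssim\|h^0\|_\E+\|h^1\|_\H$ reduces it to $\e\lambda_\e\|h^1\|_\H$ (the $h^0$ contribution being dominated by the term already present).

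Finally I would integrate the ODE by Duhamel. Since $h^1(0)=0$ we have $e(0)=\int h^{\mathrm{in}}\Psi_{d+2}$, so $|e(0)|\lesssim\|h^{\mathrm{in}}\|_\E$, producing the $\|h^{\mathrm{in}}\|_\E e^{-\overline\lambda_\e t}$ term, while
\begin{equation*}
|e(t)|\le|e(0)|\,e^{-\overline\lambda_\e t} + \int_0^t e^{-\overline\lambda_\e(t-s)}|\mathcal{S}(s)|\,\d s
\end{equation*}
furnishes the two convolution terms. Combining with $\|\mathbf{P}_0 h^1(t)\|_\E\lesssim|e(t)|+\|h^0(t)\|_\E$ yields~\eqref{eq:P0h1}. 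The main obstacle is the sharp identification of the decay rate: one must show that the diagonal coefficient $\overline\lambda_\e$ is genuinely positive and equals $\lambda_\e+\operatorname{O}(1-\re)$, which hinges on the sharp perturbation estimate~\eqref{eq:perturb} and on the precise expansion of the energy eigenvalue $\lambda_{d+2}(\e)$ from Theorem~\ref{theo:linear}; the secondary difficulty is the bookkeeping of norms, ensuring that the microscopic part $(\mathbf{Id}-\mathbf{P}_0)h$ is measured in $\H$ while the $h^0$ part is measured in $\E$.
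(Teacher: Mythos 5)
Your proposal is correct and takes essentially the same route as the paper: you use mass and vanishing-momentum conservation to reduce $\mathbf{P}_0 h^1$ to $\Pi_0 h - \mathbf{P}_0 h^0$, derive a scalar ODE for the energy coefficient whose diagonal rate $\overline\lambda_\e$ is exactly the quantity the paper extracts from $\Pi_0(\G_\e h)$ (computed in~\cite[Lemmas~4.2 and~4.5]{ALT}), and close by Duhamel together with the smallness $\e_5\Delta_0\leq 1$. The only cosmetic difference is that you control the nonlinear source through the $\Q_{\re}-\Q_1$ defect estimate, while the paper uses the exact dissipation identity $|\Pi_0 \Q_{\re}(h,h)| = (1-\re^2)\,|\mathcal{D}_{\re}(h,h)|\,\Psi_{d+2}\,\M$ and Minkowski's inequality --- both yield the same $\e\lambda_\e$ prefactor.
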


\noindent {\it Sketch of the proof.} 
Due to the properties of preservation of mass and vanishing momentum of our equation, we have $\mathbb{P}_0 h = 0$ which implies that $\mathbb{P}_0 \hu = - \mathbb{P}_0 \ho$. Consequently, we easily get an estimate on $\mathbb{P}_0 \hu$ using that $\mathbb{P}_0 \in \mathscr{B}(\E,\H)$: 
	\begin{equation} \label{eq:estimP0h1}
	\|\mathbb{P}_0 \hu(t)\|_\H \lesssim \|\ho(t)\|_\E.
	\end{equation}
It now remains to estimate $\Pi_0 \hu$. To this end, we first notice that 
	$$
	\Pi_0 \hu = \mathbf{P}_0 \hu - \mathbb{P}_0 \hu = \mathbf{P}_0 h - \mathbf{P}_0 \ho - \mathbb{P}_0 \hu 
	= \Pi_0 h- \mathbf{P}_0 \ho - \mathbb{P}_0 \hu 
	$$
where we used that $\mathbf{P}_0 h = \Pi_0 h$ due to the preservation of mass and vanishing momentum so, using~\eqref{ineq:ho} and~\eqref{eq:estimP0h1}, we only need to estimate $\Pi_0 h$ to get an estimate on $\mathbf{P}_0 \hu$. To this end, we start by computing the evolution of $\Pi_0 h$:  
	$$
	\partial_t (\Pi_0 h) = \Pi_0 (\G_\e h) + \frac1\e \Pi_0 \Q_{\re(\e)} (h,h).
	$$
By direct inspection, using the definition of $\Pi_0$ given in~\eqref{eq:PP0} and the dissipation of energy~\eqref{eq:Dre} (see~\cite[Lemmas~4.2 and~4.5]{ALT}), we obtain: as $\e \to 0$,
	$$
	\Pi_0 (\G_\e h) 
	= -\overline \lambda_\e \Pi_0 h + \operatorname{O}\left(\frac{1-\re(\e)}{\e^2}\, \|(\mathbf{Id}- \mathbf{P}_0) h\|_{\E}\right)
	$$
with
	$$
	\overline \lambda_\e = \lambda_\e + \operatorname{O} (1-\alpha(\e)) \underset{\e \to 0}{\sim} \lambda_\e.
	$$
Similarly, we have by direct computation that
	$$
	|\Pi_0 \Q_{\re(\e)} (h,h)| = (1-\re^2) |\mathcal{D}_{\re(\e)} (h,h)| \Psi_{d+2} \, \M
	$$
so that, using Minkowski's inequality to estimate $\mathcal{D}_{\re(\e)} (h,h)$, we obtain
	\begin{equation} \label{eq:Pi0Qre}
	\|\Pi_0 \Q_{\re(\e)} (h,h)\|_\E \lesssim \e^2 \|h\|^2_\E. 
	\end{equation}
Gathering previous estimates, we are able to deduce that 
	\begin{multline*}
	\|\mathbf{P}_0 h^1(t)\|_\E \lesssim
	\|h^0(t)\|_\E + \|h^{\mathrm{in}}\|_\E e^{-\overline \lambda_\e t} 
	+ \lambda_\e \int_0^t e^{-\overline \lambda_\e(t-s)} \left(\|\ho(s)\|_\E + \|(\mathbf{Id}- \mathbf{P}_0) \hu(s)\|_\H \right)  \, \d s \\
	+ \e \lambda_\e \int_0^t e^{-\overline \lambda_\e(t-s)} \left(\|\ho(s)\|_\E^2 + \|\hu(s)\|^2_\H\right)  \, \d s. 
	\end{multline*}
With this, inequality~\eqref{eq:P0h1} holds by using $\e_5 \Delta_0 \leq 1$ from the proof of Proposition~\ref{prop:h0}. 
\qed

\begin{nb} \label{nb:BMM}
A natural approach would have been to adapt the method of~\cite{bmam} by applying~$\mathbf{P}_\e$ (the projector associated to the eigenvalues $\lambda_j(\e)$ for $j = 1, \dots, d+2$ of $\G_\e$ around $0$ that have been exhibited in Theorem~\ref{theo:linear}) to our equation instead of $\mathbf{P}_0$. It implies that one would have had to estimate $\Pi_\e h$ where $\Pi_\e$ is the projector associated to the energy eigenvalue $-\lambda_\e = \lambda_{d+2}(\e)$ defined in Theorem~\ref{theo:linear}. On the one hand, it simplifies the approach because $\Pi_\e \G_\e h = - \lambda_\e \Pi_\e h$ by definition. On the other hand, this projector is not explicit contrary to $\Pi_0$ and when applying $\Pi_{\e}$ to the equation satisfied by $h$
	$$
	\partial_{t} h= \G_{\e}h + \frac1\e\Q_{\re(\e)}(h,h),
	$$
nothing guarantees that $\Pi_{\e}\left[\e^{-1}\Q_{\re(\e)}(h,h)\right]$ remains of order $1$ with respect to $\e$ whereas we have seen in~\eqref{eq:Pi0Qre} that due to the dissipation of kinetic energy,  $\Pi_{0}\left[\e^{-1}\Q_{\re(\e)}(h,h)\right]$ is actually of order~$\e$. This explains our choice of strategy. 
\end{nb}

Let us now focus on the estimate of $(\mathbf{Id} - \mathbf{P}_0) \hu$. We can proceed similarly as in~\cite{bmam}, using in particular that $\mathbf{P}_0 \Q_1 = 0$. Another crucial point is that the source term $\mathcal{A}_\e \ho$ can be bounded in $\H$ using the fact that $\mathcal{A}_\e \in \mathscr{B}(\E,\H)$ (see Lemma~\ref{lem:reghypo}). Moreover, it is important to mention that the fact that the bound on~$\mathcal{A}_\e$ induces a rate of $\e^{-2}$ will be counterbalanced by the fact that the semigroup associated with $\B_{\e}$ has an exponential decay rate of type $e^{-\nu t/\e^2}$ (see~\eqref{eq:dissipE1}). We recall that the Hilbert space $\H_1$ is defined in~\eqref{def:H1} and is such that 
	\begin{equation} \label{eq:inegQ1H}
	\|\Q_1(g,g)\|_\H \lesssim \|g\|_\H \|g\|_{\H_1}. 
	\end{equation}
\begin{prop}\label{prop:Phi} 
Assume that {$\ho\in \E$, {$\hu \in \H$}} are such that
	\begin{equation*}
	\sup_{t\geq0}\big(\|\ho (t)\|_{\E}  + \|\hu(t)\|_{{{\H}}} \big) \leq \Delta_0 <\infty.
	\end{equation*}
For $\mu \in (0,\mu_\star)$ (where $\mu_\star$ is defined in~\eqref{eq:sgG1eps}) and for $\Delta_0$ small enough, we have that:
\begin{equation}
	\begin{split}\label{eq:estimPhi}
	\|(\mathbf{Id}-\mathbf{P}_0) h^1(t)\|^2_{\H} \lesssim 
	\Delta_0^2 \int_0^t e^{-\mu(t-s)} \|h^1(s)\|_\H^2 \, \d s 
	+ \frac{1}{\e^2} \int_0^t e^{-\mu(t-s)} \|h^1(s)\|_\H \|h^0(s)\|_\E \, \d s.
	\end{split}
\end{equation}
\end{prop}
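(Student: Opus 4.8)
The plan is to derive an energy estimate for $\Phi:=(\mathbf{Id}-\mathbf{P}_0)h^1$ directly in the Hilbert space $\H$, using the hypocoercivity inequality~\eqref{eq:hypocoerc} for the linear part and the microscopic nature of the collision operator to tame the nonlinear one. The first step is to write down a clean equation for $\Phi$. Since the collision invariants span $\operatorname{Ker}\mathscr{L}_1$, one checks that $\mathbf{P}_0$ kills both $\G_{1,\e}$ and the range of $\Q_1$: on the torus $\int_{\T^d}v\cdot\nabla_x(\cdot)\,\d x=0$, one has $\mathbf{P}_0\mathscr{L}_1=0$ by self-adjointness of $\mathscr{L}_1$ in $L^2_v(\M^{-1/2})$ together with $\mathscr{L}_1(\Psi_i\M)=0$, and $\mathbf{P}_0\Q_1=0$ because $\Q_1(\cdot,\cdot)$ integrates to zero against $1,v,|v|^2$. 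In particular $\mathbf{P}_0$ commutes with $\G_{1,\e}$, so applying $(\mathbf{Id}-\mathbf{P}_0)$ to~\eqref{eq:h1} yields
$$
\partial_t\Phi=\G_{1,\e}\Phi+\frac1\e\Q_1(h^1,h^1)+(\mathbf{Id}-\mathbf{P}_0)\A_\e h^0,\qquad \Phi(0)=0.
$$

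Next I would differentiate $\|\Phi(t)\|_\H^2$ and estimate the three resulting terms. For the linear contribution, since $\mathbf{P}_0\Phi=0$, the bound~\eqref{eq:hypocoerc} applies and gives $\langle\G_{1,\e}\Phi,\Phi\rangle_\H\leq-\frac{\mu_\star}{\e^2}\|(\mathbf{Id}-\bm{\pi}_0)\Phi\|_{\H_1}^2-\mu_\star\|\Phi\|_{\H_1}^2$; I would keep part of the second, $\e$-independent term to produce the exponential weight $e^{-\mu(t-s)}$, and reserve the first, $\e^{-2}$-enhanced term to absorb the nonlinear piece. The source term is the easy one: $\A_\e=\e^{-2}\A^{(\delta)}$ and Lemma~\ref{lem:reghypo} give $\A_\e\in\mathscr{B}(\E,\H)$ with norm $\lesssim\e^{-2}$, so Cauchy--Schwarz yields $|\langle(\mathbf{Id}-\mathbf{P}_0)\A_\e h^0,\Phi\rangle_\H|\lesssim\e^{-2}\|h^0\|_\E\,\|\Phi\|_\H\lesssim\e^{-2}\|h^0\|_\E\,\|h^1\|_\H$, which is exactly the source integrand in~\eqref{eq:estimPhi} and which I would carry \emph{without} applying Young's inequality.

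The heart of the proof, and the main obstacle, is the nonlinear term $\frac1\e\langle\Q_1(h^1,h^1),\Phi\rangle_\H$, whose prefactor $\e^{-1}$ cannot be controlled by the naive bilinear bound~\eqref{eq:inegQ1H}. The crucial structural fact is that $\Q_1(h^1,h^1)$ is purely microscopic, $\bm{\pi}_0\Q_1(h^1,h^1)=0$, and that $\bm{\pi}_0$ is the orthogonal projection onto $\operatorname{Ker}\mathscr{L}_1$; hence, up to the lower-order cross terms coming from the modified scalar product, the pairing only sees the microscopic part $(\mathbf{Id}-\bm{\pi}_0)\Phi$. Using~\eqref{eq:inegQ1H} and Young's inequality against the $\e^{-2}$-enhanced dissipation, I would bound
$$
\frac1\e\bigl|\langle\Q_1(h^1,h^1),\Phi\rangle_\H\bigr|\leq\frac{\mu_\star}{2\e^2}\|(\mathbf{Id}-\bm{\pi}_0)\Phi\|_{\H_1}^2+\frac{C}{\mu_\star}\|h^1\|_\H^2\,\|h^1\|_{\H_1}^2,
$$
the point being that the dangerous factor $\e^{-1}$ has disappeared. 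Since $\mathbf{P}_0 h^1$ lives in a fixed finite-dimensional space on which all norms are equivalent, one has $\|h^1\|_{\H_1}^2\lesssim\|h^1\|_\H^2+\|\Phi\|_{\H_1}^2$; combined with the a priori bound $\|h^1\|_\H\leq\Delta_0$ this turns the residual into $\Delta_0^2\|h^1\|_\H^2+\Delta_0^2\|\Phi\|_{\H_1}^2$. The first piece is precisely the nonlinear integrand in~\eqref{eq:estimPhi}, while the second is absorbed by the reserved dissipation $-\mu_\star\|\Phi\|_{\H_1}^2$ provided $\Delta_0$ is small enough, which is exactly where the smallness assumption on $\Delta_0$ enters.

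Collecting the three estimates produces a differential inequality of the form
$$
\frac{\d}{\d t}\|\Phi\|_\H^2+\mu\,\|\Phi\|_\H^2\lesssim\Delta_0^2\,\|h^1\|_\H^2+\frac1{\e^2}\,\|h^0\|_\E\,\|h^1\|_\H,
$$
where $\mu\in(0,\mu_\star)$ has been fixed at the start by dividing up the $\e$-independent dissipation. Integrating this from the vanishing initial datum $\Phi(0)=0$ via Duhamel's formula then yields~\eqref{eq:estimPhi}. The only delicate point I expect to require care is the justification that the modified $\H$-pairing of the microscopic output $\Q_1(h^1,h^1)$ against $\Phi$ is indeed governed by $(\mathbf{Id}-\bm{\pi}_0)\Phi$ up to controllable cross terms; once this is granted, the conversion of the $\e^{-1}$ prefactor into a harmless constant, followed by the $\Delta_0$-absorption, closes the argument.
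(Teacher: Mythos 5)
Your proposal is correct and follows essentially the same route as the paper's proof: the same evolution equation for $\Phi=(\mathbf{Id}-\mathbf{P}_0)h^1$, the hypocoercive estimate~\eqref{eq:hypocoerc} with the $\e^{-2}$-enhanced microscopic dissipation absorbing the $\e^{-1}$ nonlinear pairing against $\Phi^\perp=(\mathbf{Id}-\bm{\pi}_0)\Phi$ via Young's inequality, the decomposition $h^1=\mathbf{P}_0 h^1+\Phi$ with finite-dimensionality of $\operatorname{Range}(\mathbf{P}_0)$ and $\Delta_0$-smallness to absorb $\Delta_0^2\|\Phi\|_{\H_1}^2$, and integration of the resulting differential inequality from $\Phi(0)=0$. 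Even the point you flag as delicate --- replacing $\Phi$ by $\Phi^\perp$ in the pairing with $\Q_1(h^1,h^1)$, justified by the conservation laws of $\Q_1$ and the orthogonality of $\bm{\pi}_0$ in the hypocoercive inner product --- is handled in exactly the same way in the paper.
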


\noindent {\it Sketch of the proof.} 
From~\eqref{eq:h1}, the fact that $\mathbf{P}_0 \Q_1(g,g)=0$ and the fact that $\mathbf{P}_0$ commutes with $\G_{1,\e}$, we can compute the evolution of $\Phi(t) := (\mathbf{Id} - \mathbf{P}_0) \hu$:
	$$
	\partial_t \Phi = \G_{1,\e} \Phi + \frac1\e \Q_1(h^1,h^1) + (\mathbf{Id} - \mathbf{P}_0) \A_\e h^0. 
	$$
We now use the hypocoercive norm on $\H$ for $\G_{1,\e}$ introduced in~\eqref{eq:hypocoerc} and also denote by $\Phi^\perp$ the microscopic part of~$\Phi$, namely $\Phi^\perp := (\mathbf{Id} - \bm{\pi}_0) \Phi$ where we recall that $\bm{\pi}_0$ is the projection onto the kernel of $\mathscr{L}_1$ that has been introduced in~\eqref{def:pi0}. We compute the evolution of $\|\Phi(t)\|^2_\H$: 
	$$
	\frac12 \frac{\d}{\d t} \|\Phi(t)\|^2 = \langle \G_{1,\e} \Phi(t),\Phi(t) \rangle_\H + \frac1\e \langle \Q_1(h^1(t),h^1(t)),\Phi^\perp(t) \rangle_\H + \langle (\mathbf{Id} - \mathbf{P}_0) \A_\e h^0(t), \Phi(t) \rangle_\H. 
	$$
Notice that we have been able to replace $\Phi$ by $\Phi^\perp$ in the second term due to the conservation laws satisfied by~$\Q_1$ and the fact that $\bm{\pi}_0$ is orthogonal in~$\H$. 
Then, from the properties of the hypocoercive norm (see~\eqref{eq:hypocoerc}), using~\eqref{eq:inegQ1H} and the facts that $\mathbf{P}_0 \in \mathscr{B}(\H)$, $\A \in \mathscr{B}(\E,\H)$ (from Lemma~\ref{lem:reghypo}) as well as Cauchy-Schwarz inequality, we obtain that 			
$$	\frac12 \frac{\d}{\d t} \|\Phi(t)\|_\H^2 \leq - \frac{\mu_\star}{\e^2} \|\Phi^\perp(t)\|_{\H_1}^2 - \mu_\star \|\Phi(t)\|_{\H_1}^2 
	+ \frac{C}{\e} \|h^1(t)\|_\H \|h^1(t)\|_{\H_1}\|\Phi^\perp(t)\|_\H + \frac{C}{\e^2} \|h^0(t)\|_\E \|\Phi(t)\|_\H.
$$
Making an appropriate use of Young inequality to treat the third term, we obtain that for~$\mu\in(0,\mu_\star)$, 
	\begin{equation*}\begin{split}
	\frac12 \frac{\d}{\d t} \|\Phi(t)\|^2_\H 
	&\leq - \frac{\mu}{\e^2} \|\Phi^\perp(t)\|^2_{\H_1} - \mu_\star \|\Phi(t)\|_{\H_1}^2 +C \|h^1(t)\|^2_\H \|h^1(t)\|^2_{\H_1} + \frac{C}{\e^2} \|h^0(t)\|_\E \|\Phi(t)\|_\H \\
	&\leq - \mu_\star \|\Phi(t)\|_{\H_1}^2 +C \|h^1(t)\|^2_\H \|h^1(t)\|^2_{\H_1} + \frac{C}{\e^2} \|h^0(t)\|_\E \|\Phi(t)\|_\H. 
	\end{split}\end{equation*}
In the second term, we decompose $h^1$ into two parts: $h^1 = \mathbf{P}_0 h^1 + \Phi$ and use that $\mathbf{P}_0 = \mathbf{P}_0^2$ together with the fact that $\mathbf{P}_0 \in \mathscr{B}(\E,\H)$ to obtain 
	$$
	\|h^1(t)\|^2_\H \|h^1(t)\|^2_{\H_1} \lesssim \Delta_0^2 \left(\|h^1(t)\|_\H^2 + \|\Phi(t)\|^2_{\H_1}\right). 
	$$
We can thus conclude the proof by taking $\Delta_0$ small enough and integrating the above differential inequality. Notice that the inequality stated in the Proposition holds for the equivalent ``hypocoercive norm'' introduced above and thus also holds for the usual norm on $\H$ because of the equivalence (uniformly in $\e$) between those two norms.
\qed

Combining estimates from Propositions~\ref{prop:h0} and~\ref{prop:Phi}, one can obtain that 
\begin{cor}\label{cor:Phi} 
Assume that {$\ho\in \E$, {$\hu \in \H$}} are such that
	\begin{equation*}
	\sup_{t\geq0}\big(\|\ho (t)\|_{\E}  + \|\hu(t)\|_{{{\H}}} \big) \leq \Delta_0 <\infty.
	\end{equation*}
For $\mu \in (0,\mu_\star)$ (where $\mu_\star$ is defined in~\eqref{eq:hypocoerc}), for $\Delta_0$ small enough and for any $\delta>0$, we have that:
\begin{equation}
	\begin{split}\label{eq:estimPhi2}
	\|(\mathbf{Id} - \mathbf{P}_0)h^1(t)\|^2_{\H} \lesssim 
	\frac1\delta \, \|h^{\mathrm{in}}\|^2_\E \, e^{-\mu t} + 
	(\Delta_0^2+\delta+\lambda_\e) \int_0^t e^{-\mu(t-s)} \|h^1(s)\|_\H^2 \, \d s. 
	\end{split}
\end{equation}
\end{cor}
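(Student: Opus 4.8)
The plan is to feed the bound on $\|\ho(s)\|_\E$ furnished by Proposition~\ref{prop:h0} into the cross term of Proposition~\ref{prop:Phi} and then to rebalance the resulting convolution integrals by Young's inequality. The starting point is
\[
\|(\mathbf{Id}-\mathbf{P}_0)h^1(t)\|^2_\H \lesssim \Delta_0^2\int_0^t e^{-\mu(t-s)}\|h^1(s)\|_\H^2\,\d s + \frac{1}{\e^2}\int_0^t e^{-\mu(t-s)}\|h^1(s)\|_\H\,\|\ho(s)\|_\E\,\d s,
\]
whose first term is already in the claimed form. Fixing some $\nu\in(0,\nu_0)$, I would insert
\[
\|\ho(s)\|_\E\lesssim \|h^{\mathrm{in}}\|_\E\,e^{-\frac{\nu}{\e^2}s}+\lambda_\e\int_0^s e^{-\frac{\nu}{\e^2}(s-\tau)}\|h^1(\tau)\|_\H\,\d\tau
\]
into the cross term and split it into an \emph{initial-data contribution} $I_1$ (carrying $\|h^{\mathrm{in}}\|_\E\,e^{-\nu s/\e^2}$) and a \emph{quadratic contribution} $I_2$ (carrying the factor $\lambda_\e$).

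For $I_2$ I would apply Young's inequality to $\|h^1(s)\|_\H\|h^1(\tau)\|_\H$ and then exchange the order of the $s$- and $\tau$-integrations. The decisive mechanism is that the fast kernel integrates to one power of $\e^2$: one has $\int_0^s e^{-\nu(s-\tau)/\e^2}\,\d\tau\le \e^2/\nu$ and, after switching, $\int_\tau^t e^{-\mu(t-s)}e^{-\nu(s-\tau)/\e^2}\,\d s\lesssim \e^2\,e^{-\mu(t-\tau)}$ for $\e$ small. Exactly one factor $\e^{-2}$ is thereby cancelled, leaving $I_2\lesssim \lambda_\e\int_0^t e^{-\mu(t-s)}\|h^1(s)\|_\H^2\,\d s$, which is the $\lambda_\e$-term of the claim.

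For $I_1$ the key observation is that the kernel $K_\e(s):=\e^{-2}e^{-\nu s/\e^2}$ has $L^1_s$-norm of order one \emph{uniformly in} $\e$, since $\int_0^\infty K_\e(s)\,\d s=\nu^{-1}$. I would therefore estimate $I_1$ by Cauchy--Schwarz in $s$ against the measure $e^{-\mu(t-s)}K_\e(s)\,\d s$ followed by a weighted Young inequality with parameter $\delta$: the factor $\bigl(\int_0^t e^{-\mu(t-s)}K_\e(s)\,\d s\bigr)^{1/2}\lesssim e^{-\mu t/2}$ produces the boundary term $\tfrac1\delta\|h^{\mathrm{in}}\|_\E^2\,e^{-\mu t}$, while the complementary factor leaves a residual term of the form $\delta\int_0^t e^{-\mu(t-s)}K_\e(s)\|h^1(s)\|_\H^2\,\d s$. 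Collecting $I_1$, $I_2$ and the $\Delta_0^2$-term, and absorbing this residual convolution against $K_\e$ into $\int_0^t e^{-\mu(t-s)}\|h^1(s)\|_\H^2\,\d s$, yields the stated inequality with constant $\Delta_0^2+\delta+\lambda_\e$.

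The main obstacle is precisely this $\e$-uniform bookkeeping. The estimate of Proposition~\ref{prop:Phi} carries a singular prefactor $\e^{-2}$, inherited from $\mathcal{A}_\e\in\mathscr{B}(\E,\H)$ with norm $\sim\e^{-2}$ (Lemma~\ref{lem:reghypo}), and the whole point is that it is compensated by the fast decay $e^{-\nu\cdot/\e^2}$ of the semigroup generated by $\B_\e$, as encoded in Proposition~\ref{prop:h0}. Making this compensation rigorous is delicate at two places: the Fubini exchange in $I_2$, where one must verify that reversing the order of integration does not leave an uncancelled negative power of $\e$, and the final balancing in $I_1$, where the residual term built on $K_\e$ must be dominated by the right-hand side by taking $\e_5$ and $\Delta_0$ small enough so that no factor $\e^{-2}$ survives. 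It is exactly here that the smallness hypotheses on $\e$ and $\Delta_0$ enter, and verifying that all constants produced along the way are independent of $\e$ is the crux of the argument.
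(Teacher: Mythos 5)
Your overall route is the intended one: the paper's own proof of Corollary~\ref{cor:Phi} is exactly ``insert the bound of Proposition~\ref{prop:h0} into the cross term of Proposition~\ref{prop:Phi}'' (the details being deferred to \cite{ALT}), and your treatment of the quadratic contribution $I_2$ is correct. Young's inequality, Fubini, and the two kernel computations $\int_0^s e^{-\nu(s-\tau)/\e^2}\,\d\tau\le \e^2/\nu$ and $\int_\tau^t e^{-\mu(t-s)}e^{-\nu(s-\tau)/\e^2}\,\d s\lesssim \e^2 e^{-\mu(t-\tau)}$ (valid once $\mu\e^2\le \nu/2$) cancel the factor $\e^{-2}$ exactly as you say and yield the $\lambda_\e$-term of \eqref{eq:estimPhi2}.

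The treatment of $I_1$, however, has a genuine gap at its final step. The residual term $\delta\int_0^t e^{-\mu(t-s)}K_\e(s)\|\hu(s)\|^2_\H\,\d s$, with $K_\e(s)=\e^{-2}e^{-\nu s/\e^2}$, cannot be absorbed into $\delta\int_0^t e^{-\mu(t-s)}\|\hu(s)\|^2_\H\,\d s$ with an $\e$-independent constant: $K_\e$ has mass of order one but is an approximate identity concentrated on the layer $s\lesssim\e^2$ with height $\e^{-2}$, so for a profile with $\|\hu(s)\|^2_\H$ essentially supported in $[0,\e^2]$ the first integral is of order $e^{-\mu t}$ while the second is of order $\e^2 e^{-\mu t}$ --- a loss of $\e^{-2}$. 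Your proposed remedy (take $\e_5$ and $\Delta_0$ small) is empty here: $\Delta_0$ does not enter this term at all, and shrinking $\e$ sharpens the concentration of $K_\e$, making the discrepancy worse. Nor can the defect be repaired by redistributing the kernel in the Cauchy--Schwarz/Young step: in any splitting $ab\le \tfrac{1}{2\delta}a^2 w+\tfrac{\delta}{2}b^2w^{-1}$, freeing the $\|\hu\|^2_\H$-side of the unbounded kernel forces $w\gtrsim K_\e$, and then the boundary term carries $\int_0^t e^{-\mu(t-s)}K_\e(s)^2\,\d s\sim \e^{-2}e^{-\mu t}$; an optimization over admissible profiles shows that \emph{no} pointwise use of Young's inequality on the measure $e^{-\mu(t-s)}K_\e(s)\,\d s$ can give \eqref{eq:estimPhi2} if $s\mapsto\|\hu(s)\|_\H$ is treated as an arbitrary bounded function. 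Additional structure of $\hu$ must therefore be used to close the argument. One way is to return to the differential inequality for $\|\Phi(t)\|^2_\H$ in the proof of Proposition~\ref{prop:Phi} and handle the initial-data part of the source $\e^{-2}\|\ho\|_\E\|\Phi\|_\H$ \emph{linearly} in $\|\Phi\|_\H$ via Duhamel, so that $K_\e$ is integrated only once against $e^{-\frac{\mu}{2}(t-s)}$ (which costs nothing) and squaring afterwards produces the clean boundary term $\tfrac1\delta\|h^{\mathrm{in}}\|^2_\E e^{-\mu t}$; alternatively one can exploit $\hu(0)=0$ together with equation~\eqref{eq:h1} to control $\|\hu(s)\|_\H$ on the initial layer $s\lesssim\e^2$ where $K_\e$ concentrates. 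As written, your argument does not close precisely at the $\e$-uniform bookkeeping you yourself identify as the crux.
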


\begin{nb}
The fact that we are able to obtain a multiplicative constant that can be chosen small in front of the second term is very important to recover a decay for $h^1$. Indeed, in Proposition~\ref{prop:P0h1}, in the estimate of $\mathbf{P}_0h^1$, the term 
	$$
	\lambda_\e \int_0^t e^{-\overline \lambda_\e (t-s)} \|(\mathbf{Id} - \mathbf{P}_0)h^1(s)\|_\H \, \d s 
	$$
is problematic when applying Gronwall lemma if one hopes to recover some decay in time but the extra small constant that appears in the estimate of $(\mathbf{Id} - \mathbf{P}_0)h^1$ in~\eqref{eq:estimPhi2} allows us to circumvent this difficulty. 
\end{nb}
In the end, we are able to prove the following proposition:

\begin{cor} \label{cor:h1}
Let $r \in (0,1)$. Assume that {$\ho\in \E$, {$\hu \in \H$}} are such that
	\begin{equation*}
	\sup_{t\geq0}\big(\|\ho (t)\|_{\E}  + \|\hu(t)\|_{{{\H}}} \big) \leq \Delta_0 <\infty
	\end{equation*}
where $\Delta_0$ is small enough so that the conclusion of Corollary~\ref{cor:Phi} holds. There exists $\e_6 \in (0,\e_5)$ (where $\e_5$ is defined in Proposition~\ref{prop:h0}) and $\lambda_6 \in (0, \lambda_4)$ (where $\lambda_4$ is defined in Lemma~\ref{lem:PaP0}) such that for any $\e \in (0,\e_6)$ and any $\lambda_0 \in [0,\lambda_6)$ (where $\lambda_0$ is defined in Assumption~\ref{hyp:re}), 
	$$
	\|h^1(t)\|_\H \leq C \, \|h^{\mathrm{in}}\|_\H \, e^{-(1-r)\overline \lambda_\e t}
	$$
where $\overline \lambda_\e$ has been introduced in Proposition~\ref{prop:P0h1} and the constant $C$ depends on $r$, $\Delta_0$, $\mu_\star$ (defined in~\eqref{eq:hypocoerc}) and $\nu_0$ (defined in Lemma~\ref{lem:reghypo}). 
\end{cor}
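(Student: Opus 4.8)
The plan is to feed the three a priori estimates --- Proposition~\ref{prop:h0} for $\ho$, Proposition~\ref{prop:P0h1} for $\mathbf{P}_0\hu$, and Corollary~\ref{cor:Phi} for $(\mathbf{Id}-\mathbf{P}_0)\hu$ --- into one another and close a single weighted bound on $\|\hu(t)\|_\H$. Writing $\hu=\mathbf{P}_0\hu+(\mathbf{Id}-\mathbf{P}_0)\hu$ and using that $\mathbf{P}_0$ has finite-dimensional range (so all norms agree on it, in particular $\|\mathbf{P}_0\hu\|_\H\lesssim\|\mathbf{P}_0\hu\|_\E$), one has $\|\hu(t)\|_\H\lesssim\|\mathbf{P}_0\hu(t)\|_\E+\|(\mathbf{Id}-\mathbf{P}_0)\hu(t)\|_\H$. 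With the weight $w:=(1-r)\overline\lambda_\e$ I would introduce, on a fixed interval $[0,T]$, the finite weighted suprema $A:=\sup_{[0,T]}e^{ws}\|\ho(s)\|_\E$, $B:=\sup_{[0,T]}e^{ws}\|\mathbf{P}_0\hu(s)\|_\E$, $C:=\sup_{[0,T]}e^{ws}\|(\mathbf{Id}-\mathbf{P}_0)\hu(s)\|_\H$ and $H:=\sup_{[0,T]}e^{ws}\|\hu(s)\|_\H$, so that $H\lesssim B+C$; these are finite by continuity of the constructed solution, and all bounds will be uniform in $T$, whence a continuation argument yields the estimate on $[0,\infty)$. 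Since $\H\hookrightarrow\E_2\hookrightarrow\E_1\hookrightarrow\E$ and $\hu(0)=0$, one has $h^{\mathrm{in}}=\ho(0)$ and every $\|h^{\mathrm{in}}\|_\E$ below may be replaced by $\|h^{\mathrm{in}}\|_\H$ at the end.

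The first step extracts the two small coupling coefficients. Multiplying the bound of Proposition~\ref{prop:h0} by $e^{ws}$, using $\nu/\e^2\gg w$ and the elementary convolution bound $e^{wt}\int_0^t e^{-\frac{\nu}{\e^2}(t-s)}e^{-ws}\,\d s\le\e^2/\nu$, I obtain $A\lesssim\|h^{\mathrm{in}}\|_\E+c_1 H$ with $c_1:=\tfrac{1-\re(\e)}{\nu}$, which is of order $\e^2$. Similarly, squaring and weighting Corollary~\ref{cor:Phi} and using $\mu>2w$ gives $C\lesssim\tfrac{1}{\sqrt\delta}\|h^{\mathrm{in}}\|_\E+c_2 H$ with $c_2:=\sqrt{(\Delta_0^2+\delta+\lambda_\e)/\mu}$. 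Both $c_1$ and $c_2$ can be made as small as we wish by choosing first $\delta$, then $\Delta_0,\lambda_0,\e$ small, and this is the only place the smallness of the parameters is genuinely used.

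The delicate step --- and the one I expect to be the main obstacle --- is the estimate of $\mathbf{P}_0\hu$ through Proposition~\ref{prop:P0h1}, whose convolution kernel $e^{-\overline\lambda_\e(t-s)}$ decays at \emph{exactly} the rate $\overline\lambda_\e$ that we are trying to extract. Consequently the convolution gains only $e^{wt}\int_0^t e^{-\overline\lambda_\e(t-s)}e^{-ws}\,\d s\le\tfrac{1}{r\overline\lambda_\e}$, and multiplying by the prefactor $\lambda_\e\sim\overline\lambda_\e$ leaves a factor of size $1/r$ that is \emph{not} small: no smallness can be harvested from the convolution itself, precisely the difficulty flagged after Corollary~\ref{cor:Phi}. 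The smallness must therefore be imported entirely through the coefficients already attached to $A$ and $C$. Inserting the bounds of the previous step into the weighted form of Proposition~\ref{prop:P0h1} yields
\[
B \lesssim C_{r,\delta}\,\|h^{\mathrm{in}}\|_\E + \Big(c_1+\tfrac{c_1+c_2}{r}+\tfrac\e r\Big)H ,
\]
where $C_{r,\delta}<\infty$ collects the (harmless) data terms and the factor $1/r$ comes from the convolution above.

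Combining $H\lesssim B+C$ with the three bounds closes the loop in the form $H\le C'_{r,\delta}\,\|h^{\mathrm{in}}\|_\E+\theta\,H$ with $\theta=C\big(c_1+\tfrac{c_1+c_2}{r}+\tfrac\e r+c_2\big)$. Since $r\in(0,1)$ is fixed, the ordering of choices matters: I would fix $\delta$ first (which fixes the finite prefactor $C'_{r,\delta}$, in particular the harmless $1/\sqrt\delta$ inherited from $C$), and then shrink $\Delta_0$, $\lambda_0<\lambda_6$ and $\e<\e_6$ until $\theta\le\tfrac12$. Absorbing $\theta H$ then gives $H\lesssim\|h^{\mathrm{in}}\|_\E\lesssim\|h^{\mathrm{in}}\|_\H$ uniformly in $T$, i.e. $\|\hu(t)\|_\H\le C\,\|h^{\mathrm{in}}\|_\H\,e^{-(1-r)\overline\lambda_\e t}$, which is the assertion.
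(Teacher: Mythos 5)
Your proof is correct and follows essentially the same route as the paper: you combine Proposition~\ref{prop:h0}, Proposition~\ref{prop:P0h1} and Corollary~\ref{cor:Phi}, and your weighted-supremum bootstrap with weight $e^{(1-r)\overline\lambda_\e t}$ is precisely the ``non standard Gronwall'' absorption the paper alludes to, correctly identifying (as in the remark after Corollary~\ref{cor:Phi}) that the convolution kernel $e^{-\overline\lambda_\e(t-s)}$ yields no smallness and that the closure must come from the small coefficients $c_1\sim\e^2$ and $c_2\sim\sqrt{(\Delta_0^2+\delta+\lambda_\e)/\mu}$, with the parameters fixed in the right order ($\delta$ first, depending on $r$, then $\Delta_0$, $\lambda_0$, $\e$).
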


\subsection{Estimates on the kinetic problem}
Combining the previous corollary with Proposition~\ref{prop:h0}, we are able to get our final a priori estimates on~$h$ in the space $\E$:
\begin{prop} \label{prop:h}
Let $r \in (0,1)$. Assume that {$\ho\in \E$, {$\hu \in \H$}} are such that
	\begin{equation*}
	\sup_{t\geq0}\big(\|\ho (t)\|_{\E}  + \|\hu(t)\|_{{{\H}}} \big) \leq \Delta_0 <\infty
	\end{equation*}
where $\Delta_0$ is small enough so that the conclusion of Corollary~\ref{cor:Phi} holds. There exists $\e^\dagger \in (0,\e_6)$, $\lambda^\dagger \in (0, \lambda_6)$ (where $\e_6$ and $\lambda_6$ are defined in Proposition~\ref{cor:h1}) such that for any $\e \in (0,\e^\dagger)$ and any $\lambda_0 \in [0,\lambda^\dagger)$ (where $\lambda_0$ is defined in Assumption~\ref{hyp:re}), 
	\begin{equation*}
	\|h(t)\|_\E \leq C \, \|h^{\mathrm{in}}\|_\E \, e^{-(1-r) \lambda_\e t} \qquad
	 \text{and} \quad 
	\int_0^t \|h(s)\|_{\E_1} \, \d s \leq C \, \|h^{\mathrm{in}}\|_\E \min\left\{ 1 + t, 1 + \frac{1}{\lambda_\e} \right\}
	\end{equation*}
where $\lambda_\e \underset{\e\to 0}{\sim} (1-\re(\e))/\e^2$ has been defined in Theorem~\ref{theo:linear} and the constant $C$ depends on $r$, $\Delta_0$, $\mu_\star$ (defined in~\eqref{eq:sgG1eps}) and $\nu_0$ (defined in Lemma~\ref{lem:reghypo}). 
\end{prop}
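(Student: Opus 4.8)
\noindent {\it Sketch of the proof.} The plan is to recombine the two pieces of the splitting $h=\ho+\hu$ from~\eqref{eq:h0}--\eqref{eq:h1} and to feed the decay of~$\hu$ obtained in Corollary~\ref{cor:h1} into the Duhamel estimate~\eqref{ineq:ho} of Proposition~\ref{prop:h0}. First I would dispose of~$\hu$: since $\H \hookrightarrow \E_2 \hookrightarrow \E_1 \hookrightarrow \E$, Corollary~\ref{cor:h1} (whose right-hand side is controlled by $\|h^{\mathrm{in}}\|_\E$, the datum of~$\hu$ vanishing and~$\hu$ being driven only by the source $\A_\e\ho$ with $\ho(0)=h^{\mathrm{in}}$) gives at once
\begin{equation*}
\|\hu(t)\|_\E \lesssim \|\hu(t)\|_\H \leq C\,\|h^{\mathrm{in}}\|_\E\,e^{-(1-r)\overline\lambda_\e t}, \qquad \int_0^t \|\hu(s)\|_{\E_1}\,\d s \lesssim \int_0^t \|\hu(s)\|_\H\,\d s \leq C\,\|h^{\mathrm{in}}\|_\E \min\{t, 1/\overline\lambda_\e\}.
\end{equation*}

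The core of the argument is the pointwise bound on~$\ho$. Inserting the decay of $\|\hu\|_\H$ into~\eqref{ineq:ho} yields
\begin{equation*}
\|\ho(t)\|_\E \lesssim \|h^{\mathrm{in}}\|_\E\,e^{-\frac{\nu}{\e^2}t} + C\,\lambda_\e\,\|h^{\mathrm{in}}\|_\E \int_0^t e^{-\frac{\nu}{\e^2}(t-s)}\,e^{-(1-r)\overline\lambda_\e s}\,\d s.
\end{equation*}
The two exponential scales are genuinely separated: the dissipation rate $a:=\nu/\e^2$ of~$\B_\e$ blows up as $\e\to0$ while $b:=(1-r)\overline\lambda_\e$ stays bounded (recall $\lambda_\e\to\lambda_0$), so $a>b$ for~$\e$ small and the convolution of the two exponentials is bounded by $(a-b)^{-1}e^{-bt}\lesssim \e^2\nu^{-1}e^{-(1-r)\overline\lambda_\e t}$. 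Since $\lambda_\e\e^2\sim 1-\re(\e)\to0$, the resulting prefactor is small and
\begin{equation*}
\|\ho(t)\|_\E \lesssim \|h^{\mathrm{in}}\|_\E\,e^{-\frac{\nu}{\e^2}t} + (1-\re(\e))\,\|h^{\mathrm{in}}\|_\E\,e^{-(1-r)\overline\lambda_\e t} \lesssim \|h^{\mathrm{in}}\|_\E\,e^{-(1-r)\lambda_\e t},
\end{equation*}
where in the last step I absorb the fast mode (legitimate since $\nu/\e^2\gg\lambda_\e$) and trade~$\overline\lambda_\e$ for~$\lambda_\e$ at the cost of shrinking~$r$ slightly, using $\overline\lambda_\e/\lambda_\e\to1$. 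Adding the~$\hu$-bound gives the first claimed estimate $\|h(t)\|_\E\lesssim\|h^{\mathrm{in}}\|_\E e^{-(1-r)\lambda_\e t}$.

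For the integrated $\E_1$-bound I would return to the differential inequality~\eqref{eq:inegho} rather than to~\eqref{ineq:ho}. Integrating it in time and absorbing the nonlinear term $\e^{-1}(\|\ho\|_\E+\|\hu\|_{\E_1})\|\ho\|_{\E_1}$ into the dissipation $-\e^{-2}\nu_0\|\ho\|_{\E_1}$---possible because of the a priori bound $\sup_t(\|\ho\|_\E+\|\hu\|_\H)\leq\Delta_0$ and the smallness of~$\e$---leaves, after using $\E_2\hookrightarrow\H$,
\begin{equation*}
\frac{\nu_0}{2\e^2}\int_0^t \|\ho(s)\|_{\E_1}\,\d s \lesssim \|h^{\mathrm{in}}\|_\E + \frac{1-\re(\e)}{\e^2}\int_0^t\|\hu(s)\|_\H\,\d s + \frac{1-\re(\e)}{\e}\int_0^t\|\hu(s)\|_\H^2\,\d s.
\end{equation*}
Since $\frac{1-\re(\e)}{\e^2}\sim\overline\lambda_\e$ and $\overline\lambda_\e\min\{t,1/\overline\lambda_\e\}\leq1$, the last two terms are $\mathrm{O}(\|h^{\mathrm{in}}\|_\E)$, whence $\int_0^t\|\ho(s)\|_{\E_1}\,\d s\lesssim \e^2\|h^{\mathrm{in}}\|_\E$, a bounded quantity. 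Combining with the~$\hu$-bound of the first paragraph and using $\overline\lambda_\e\gtrsim\lambda_\e$ gives $\int_0^t\|h(s)\|_{\E_1}\,\d s\lesssim \|h^{\mathrm{in}}\|_\E(\e^2+\min\{t,1/\lambda_\e\})\lesssim\|h^{\mathrm{in}}\|_\E\min\{1+t,1+1/\lambda_\e\}$.

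The main obstacle---and the reason the scaling of Assumption~\ref{hyp:re} is essential---is the matching of the two exponential rates in the Duhamel convolution: one must verify that the large dissipation rate $\nu/\e^2$ of~$\B_\e$ produces a gain of a factor~$\e^2$ that exactly compensates the prefactor~$\lambda_\e$, so that the feedback of~$\hu$ on~$\ho$ carries the small weight $1-\re(\e)$ and the true decay rate is the energy rate~$\lambda_\e$ and not the artificial fast rate. The secondary technical point, passing from~$\overline\lambda_\e$ to~$\lambda_\e$, costs only an arbitrarily small loss in~$r$ because $\overline\lambda_\e/\lambda_\e\to1$.
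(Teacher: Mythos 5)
Your proposal is correct and follows essentially the same route as the paper: it combines the exponential decay of $\hu$ from Corollary~\ref{cor:h1} with the Duhamel estimate~\eqref{ineq:ho} of Proposition~\ref{prop:h0}, exploiting the separation of scales $\nu/\e^2\gg\overline\lambda_\e$ so that the convolution produces the factor $\e^2$ that converts the prefactor $\lambda_\e$ into the small quantity $1-\re(\e)$, and it recovers the integrated $\E_1$-bound by integrating~\eqref{eq:inegho}, which matches the paper's own estimates $\|h^0_\e\|_{L^1([0,\infty);\E_1)}\lesssim\e^2$ in~\eqref{eq:estimh0}. Your parenthetical reading of Corollary~\ref{cor:h1} is also the right one: its right-hand side must indeed be controlled by $\|h^{\mathrm{in}}\|_\E$ (consistently with Proposition~\ref{prop:P0h1} and Corollary~\ref{cor:Phi}, since $\hu(0)=0$ and $\hu$ is driven only by $\A_\e\ho$ with $\A_\e\in\mathscr{B}(\E,\H)$), as $\|h^{\mathrm{in}}\|_\H$ alone would not be dominated by $\|h^{\mathrm{in}}\|_\E$.
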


\begin{nb}
Notice that for a fixed $\e>0$, the second a priori estimate shows that $h=h_\e$ belongs to the space~$L^1([0,\infty),\E_1)$. If one is interested in getting bounds on the family $\{h_\e\}_\e$, then we obtain that if $\lambda_0>0$ (in Assumption~\ref{hyp:re}), then the family is bounded in $L^1([0,\infty),\E_1)$ and if $\lambda_0=0$, then for any $T>0$, the family is bounded in $L^1([0,T),\E_1)$.
\end{nb}

Thanks to the above a priori estimates, we can prove Theorem~\ref{theo:main-cauchy} by introducing a suitable iterative scheme that is stable and convergent. We refer to~\cite[Section~5]{ALT} for the details of the proof. We can actually prove the following more precise estimates (which will be useful in what follows) on $h^0_\e$ and $h^1_\e$ that are respectively solutions to~\eqref{eq:h0} and~\eqref{eq:h1}: 
	\begin{equation} \label{eq:estimh0}
	\|h^0_\e\|_{L^\infty([0,\infty)\,;\,\E)} \lesssim 1
	\quad \text{and} \quad 
	\|h^0_\e\|_{L^1([0,\infty)\,;\,\E_1)} \lesssim \e^2 
	\end{equation}
as well as
	\begin{equation} \label{eq:estimh1}
	\|h^1_\e\|_{L^\infty([0,\infty)\,;\,\H)} \lesssim 1
	\quad \text{and} \quad 
	\|h^1_\e\|_{L^2([0,\infty)\,;\,\H_1)} \lesssim 1
	\end{equation}
where we recall that the spaces $\H$ and $\H_1$ are respectively defined in~\eqref{def:H} and~\eqref{def:H1}. 
Notice that in the previous inequalities, the multiplicative constants only involve quantities related to the initial data of the problem and are independent of~$\e$. 
\section{Derivation of the fluid limit system} \label{sec:hydro}

The Cauchy theory developed in the previous results give all the \emph{a priori} estimates that will allow to prove Theorem~\ref{theo:hydro}. To this end, we make additional assumptions in the definition of the spaces~$\E$ and~$\E_1$, namely, in this section, those spaces are defined through: 
	\begin{equation} \label{defbis:E-E1}
	\E:= {\W^{k,1}_{v}\W^{m,2}_{x}}(\langle v\rangle^{q}), \quad \E_{1}:={\W^{k,1}_{v}\W^{m,2}_{x}}(\langle v\rangle^{q+1})
	\quad \text{with} \quad
	 m  > d, \quad m-1 \geq  k \geq 1, \quad q   \geq 5. 
	\end{equation}
We assume that Assumption~\ref{hyp:re} is met, consider $\e$, $\lambda_0$ and $\eta_0$ sufficiently small so that the conclusion of Theorem~\ref{theo:main-cauchy} holds in those spaces and consider $\{h_\e\}_\e$ a family of solutions to~\eqref{eq:BE} constructed in this theorem that splits as $h_\e = \ho_\e+\hu_\e$ with $\ho_\e$ and $\hu_\e$ defined in Section~\ref{sec:nonlinear}. 
We also fix~$T>0$ for the rest of the section.
\subsection{Weak convergence}
We start by the following lemma which in particular tells that the microscopic part of $h_\e$ vanishes in the limit~$\e \to 0$:
\begin{lem} \label{lem:hperpholder}
For any $0 \leq t_1 \leq t_2 \leq T$, there holds:
	\begin{equation} \label{eq:hperpholder}
	\int_{t_1}^{t_2} \|(\mathbf{Id} - \bm{\pi}_0)h_\e(\tau)\|_{\E} \, \d \tau \lesssim \e \sqrt{t_2-t_1},
	\end{equation}
where we recall that $\bm{\pi}_0$ is the projection onto the kernel of $\mathscr{L}_1$ defined in~\eqref{def:pi0}. 
\end{lem}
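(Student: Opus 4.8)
\noindent\emph{Plan of proof.} The plan is to prove the stronger bound $\|(\mathbf{Id}-\bm{\pi}_0)h_\e\|_{L^2([0,\infty);\E)}\lesssim\e$ and then to conclude by Cauchy--Schwarz in the time variable, since
$$
\int_{t_1}^{t_2}\|(\mathbf{Id}-\bm{\pi}_0)h_\e(\tau)\|_\E\,\d\tau \leq \sqrt{t_2-t_1}\,\|(\mathbf{Id}-\bm{\pi}_0)h_\e\|_{L^2([0,\infty);\E)}.
$$
I would work from the splitting $h_\e=\ho_\e+\hu_\e$ and estimate the microscopic part of each piece separately, using the nested a priori estimates~\eqref{eq:estimh0} and~\eqref{eq:estimh1}. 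Throughout I use that $\bm{\pi}_0$ (defined in~\eqref{def:pi0}) is bounded on $\E$ and the continuous embedding $\H_1\hookrightarrow\E$, which follows from Cauchy--Schwarz in $v$, the Gaussian weight of $\H_1$ dominating the polynomial weight of $\E$.

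For the first piece I would simply bound $\|(\mathbf{Id}-\bm{\pi}_0)\ho_\e\|_\E\lesssim\|\ho_\e\|_\E\leq\|\ho_\e\|_{\E_1}$ (boundedness of the projection and the embedding $\E_1\hookrightarrow\E$) and interpolate in time:
$$
\|\ho_\e\|_{L^2([0,\infty);\E)}^2 \leq \|\ho_\e\|_{L^\infty([0,\infty);\E)}\,\|\ho_\e\|_{L^1([0,\infty);\E_1)}\lesssim \e^2,
$$
using both bounds in~\eqref{eq:estimh0}. For the second piece, the key algebraic observation is that $\mathbf{P}_0\hu_\e$ (with $\mathbf{P}_0$ the projection~\eqref{eq:P0}) is a function of $v$ only lying in $\operatorname{Ker}\mathscr{L}_1=\operatorname{Range}\bm{\pi}_0$, so $(\mathbf{Id}-\bm{\pi}_0)\mathbf{P}_0\hu_\e=0$ and hence
$$
(\mathbf{Id}-\bm{\pi}_0)\hu_\e=(\mathbf{Id}-\bm{\pi}_0)(\mathbf{Id}-\mathbf{P}_0)\hu_\e=\Phi^\perp,
$$
where $\Phi:=(\mathbf{Id}-\mathbf{P}_0)\hu_\e$ and $\Phi^\perp:=(\mathbf{Id}-\bm{\pi}_0)\Phi$ exactly as in the proof of Proposition~\ref{prop:Phi}. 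Through $\H_1\hookrightarrow\E$ it then suffices to prove $\|\Phi^\perp\|_{L^2([0,\infty);\H_1)}\lesssim\e$.

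This last estimate is the heart of the matter. I would run the hypocoercive energy estimate for $\Phi$, which solves $\partial_t\Phi=\G_{1,\e}\Phi+\tfrac1\e\Q_1(\hu_\e,\hu_\e)+(\mathbf{Id}-\mathbf{P}_0)\A_\e\ho_\e$ with $\Phi(0)=0$, in $\H$, using~\eqref{eq:hypocoerc} to extract the strong microscopic dissipation $-\tfrac{\mu_\star}{\e^2}\|\Phi^\perp\|_{\H_1}^2$. Integrating over $[0,\infty)$ and dropping the nonnegative terms on the left gives
$$
\frac{\mu_\star}{\e^2}\int_0^\infty\|\Phi^\perp(t)\|_{\H_1}^2\,\d t\leq \frac1\e\int_0^\infty|\langle\Q_1(\hu_\e,\hu_\e),\Phi^\perp\rangle_\H|\,\d t+\int_0^\infty|\langle(\mathbf{Id}-\mathbf{P}_0)\A_\e\ho_\e,\Phi\rangle_\H|\,\d t.
$$
The nonlinear term is treated by~\eqref{eq:inegQ1H} and Young's inequality, absorbing a $\tfrac{\mu_\star}{2\e^2}\|\Phi^\perp\|_{\H_1}^2$ into the left-hand side and leaving a remainder $\lesssim\|\hu_\e\|_{L^\infty(\H)}^2\|\hu_\e\|_{L^2(\H_1)}^2\lesssim1$ by~\eqref{eq:estimh1}.

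The main obstacle is the source term, which carries a factor $\e^{-2}$ through $\A_\e=\e^{-2}\A$. The crucial point is that one must \emph{not} apply Young's inequality here, as that would produce an uncontrollable $\e^{-4}$; instead I would estimate it directly as
$$
\int_0^\infty|\langle(\mathbf{Id}-\mathbf{P}_0)\A_\e\ho_\e,\Phi\rangle_\H|\,\d t\lesssim\frac1{\e^2}\,\|\Phi\|_{L^\infty([0,\infty);\H)}\int_0^\infty\|\ho_\e\|_\E\,\d t\lesssim\frac1{\e^2}\cdot1\cdot\e^2\lesssim1,
$$
using $\A\in\mathscr{B}(\E,\H)$ and $\mathbf{P}_0\in\mathscr{B}(\H)$ (Lemma~\ref{lem:reghypo}), the uniform bound $\|\Phi\|_{L^\infty(\H)}\lesssim\|\hu_\e\|_{L^\infty(\H)}\lesssim1$ from~\eqref{eq:estimh1}, and the time-integrated smallness $\int_0^\infty\|\ho_\e\|_\E\leq\|\ho_\e\|_{L^1([0,\infty);\E_1)}\lesssim\e^2$ from~\eqref{eq:estimh0}. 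The gain thus comes from trading the diverging prefactor $\e^{-2}$ against the quadratic-in-$\e$ time integral of $\ho_\e$, rather than against the dissipation. Combining the two terms yields $\int_0^\infty\|\Phi^\perp\|_{\H_1}^2\lesssim\e^2$, i.e. $\|\Phi^\perp\|_{L^2(\H_1)}\lesssim\e$, which together with the $\ho_\e$ bound and the Cauchy--Schwarz reduction of the first paragraph completes the proof.
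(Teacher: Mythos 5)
Your proposal is correct and takes essentially the same route as the paper: the paper likewise reduces matters to an $L^2$-in-time bound via Cauchy--Schwarz, interpolates $\|\ho_\e\|_{L^2(\E)}^2 \leq \|\ho_\e\|_{L^\infty([0,\infty);\E)}\|\ho_\e\|_{L^1([0,\infty);\E_1)} \lesssim \e^2$ using~\eqref{eq:estimh0}, and integrates the hypocoercive energy estimate to extract $\e^{-2}\int\|(\mathbf{Id}-\bm{\pi}_0)\hu_\e\|_{\H_1}^2 \lesssim 1$, crucially trading the $\e^{-2}$ carried by $\A_\e$ against the $\e^2$-smallness of $\|\ho_\e\|_{L^1([0,\infty);\E_1)}$ exactly as you do. The only cosmetic difference is that the paper runs the energy estimate on $\hu_\e$ itself, whose dissipation directly controls $(\mathbf{Id}-\bm{\pi}_0)\hu_\e$, whereas you pass through $\Phi=(\mathbf{Id}-\mathbf{P}_0)\hu_\e$ and the (correct) identity $(\mathbf{Id}-\bm{\pi}_0)\hu_\e=\Phi^\perp$ --- equivalent bookkeeping.
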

\begin{proof}
We first remark that 
	\begin{multline*}
	\int_{t_1}^{t_2} \|(\mathbf{Id} - \bm{\pi}_0)h_\e(\tau)\|_{\E} \, \d\tau
	\lesssim 
	\left(\int_{t_1}^{t_2} \|(\mathbf{Id} - \bm{\pi}_0)h^0_\e(\tau)\|^2_{\E} \, \d \tau\right)^{1/2} \sqrt{t_2-t_1} \\
	+ \left(\int_{t_1}^{t_2} \|(\mathbf{Id} - \bm{\pi}_0)h^1_\e(\tau)\|^2_{\H_1} \, \d\tau\right)^{1/2} \sqrt{t_2-t_1}.
	\end{multline*}
The first term is estimated thanks to~\eqref{eq:estimh0}, which gives:
	$$
	\int_{t_1}^{t_2} \|(\mathbf{Id} - \bm{\pi}_0)h^0_\e(\tau)\|^2_{\E} \, \d \tau
	\lesssim \|(\mathbf{Id} - \bm{\pi}_0)h^0_\e\|_{L^\infty((0,T) \, ; \, \E)} 
	\|(\mathbf{Id} - \bm{\pi}_0)h^0_\e\|_{L^1((0,T) \, ; \, \E_1)} \lesssim \e^2.  
	$$
Concerning the second one, we perform similar computations as in the proof of Proposition~\ref{prop:Phi}. We recall that $h^1_\e$ solves~\eqref{eq:h1}
and consider $\|\cdot\|_\H$ an hypocoercive norm on $\H$ (see~\eqref{eq:hypocoerc}). We then have for $\mu \in (0,\mu_\star)$:
	\begin{equation*}
	\frac12 \frac{\d}{\d t} \|h^1_\e(t)\|_\H^2 
	\leq - \frac{\mu}{\e^2} \|(\mathbf{Id}- \bm{\pi}_0) h^1_\e(t)\|_{\H_1}^2
	 - \mu_\star \| h^1_\e(t)\|_{\H_1}^2 
	 +C \|h^1_\e(t)\|^2_\H \|h^1_\e(t)\|^2_{\H_1} + \frac{C}{\e^2} \|h^0_\e(t)\|_\E \|h^1_\e(t)\|_\H
	\end{equation*}
from which we deduce that 
	\begin{multline*}
	\frac{1}{\e^2} \int_{t_1}^{t_2} \|(\mathbf{Id} - \bm{\pi}_0)h^1_\e(\tau)\|^2_{\H_1} \, \d\tau 
	\lesssim \|h^1_\e(t_1)\|^2_{\H} \\
	+ \int_{t_1}^{t_2}  \|h^1_\e(\tau)\|^2_\H \|h^1_\e(\tau)\|^2_{\H_1} \, \d\tau  
	+ \frac{1}{\e^2}\int_{t_1}^{t_2} \|h^0_\e(\tau)\|_\E \|h^1_\e(\tau)\|_\H \, \d \tau \lesssim 1
	\end{multline*}
where we used~\eqref{eq:estimh0} and~\eqref{eq:estimh1} to get the last estimate. Therefore, as for $h^0_\e$ one has
$$\int_{t_1}^{t_2}\|(\mathbf{Id} - \bm{\pi}_0)h^1_\e(\tau)\|^2_{\H_1} \, \d\tau \lesssim \e^2 $$
and this allows to conclude to the wanted estimate. 
\end{proof}
Using estimates~\eqref{eq:estimh0},~\eqref{eq:estimh1} and~\eqref{eq:hperpholder}, one can prove the following result of weak convergence (we refer to~\cite[Theorem~6.4]{ALT} for more details on the proof):
\begin{theo}\label{theo:conv} 
Up to extraction of a subsequence, one has
	\begin{equation}\begin{cases}\label{eq:mode-conv}
	\left\{\ho_{\e}\right\}_{\e} \text{converges to $0$ strongly  in } L^{1}((0,T)\,;\,{\E_1}), \\[0.2cm]
	\left\{\hu_{\e}\right\}_{\e} \text{converges to $\bm{h}$ weakly in } L^{2}\left((0,T)\,;\H\right),
	\end{cases}\end{equation}
where $\bm{h}=\bm{\pi}_{0}(\bm{h})$. In particular, there exist 
	\begin{equation*}
	\varrho \in L^{2}\left((0,T);\,\W^{m,2}_{x}(\T^{d})\right), \quad
	u \in L^{2}\left((0,T);\;\left(\W^{m,2}_{x}(\T^{d})\right)^{d}\right),  \qquad
	\vE \in L^{2}\left((0,T);\,\W^{m,2}_x(\T^{d})\right), 
	\end{equation*}
such that
	\begin{equation}\label{def:bmh}
	\bm{h}(t,x,v)=\left(\varrho(t,x)+u(t,x)\cdot v + \frac{1}{2}\vE(t,x)(|v|^{2}-d\en_{1})\right)\M(v)
	\end{equation}
where $\M$ is the Maxwellian distribution introduced in \eqref{eq:max}.
\end{theo}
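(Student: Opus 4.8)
The plan is to prove Theorem~\ref{theo:conv} by combining the a priori bounds \eqref{eq:estimh0}--\eqref{eq:estimh1} with the microscopic smallness established in Lemma~\ref{lem:hperpholder}, using only soft weak-compactness arguments in the Hilbert space $L^2((0,T);\H)$. First I would dispose of $\ho_\e$: the second bound in \eqref{eq:estimh0} gives $\|\ho_\e\|_{L^1((0,T);\E_1)} \leq \|\ho_\e\|_{L^1([0,\infty);\E_1)} \lesssim \e^2$, so $\ho_\e \to 0$ strongly in $L^1((0,T);\E_1)$ along the \emph{full} family, no extraction being needed here. For $\hu_\e$, the bound \eqref{eq:estimh1} and the continuous embedding $\H_1 \hookrightarrow \H$ (together with $T<\infty$) show that $\{\hu_\e\}_\e$ is bounded in $L^2((0,T);\H)$; weak compactness of bounded sets in this Hilbert space then yields a subsequence along which $\hu_\e \rightharpoonup \bm{h}$ weakly in $L^2((0,T);\H)$ for some limit $\bm{h}$.

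The heart of the argument is to identify that this limit lies in the kernel, namely $\bm{h}=\bm{\pi}_0\bm{h}$. The key input is that the microscopic part vanishes strongly: the computation carried out in the proof of Lemma~\ref{lem:hperpholder} shows (taking $t_1=0$, $t_2=T$) that $\int_0^T \|(\mathbf{Id}-\bm{\pi}_0)\hu_\e(\tau)\|^2_{\H_1}\,\d\tau \lesssim \e^2$, so $(\mathbf{Id}-\bm{\pi}_0)\hu_\e \to 0$ strongly in $L^2((0,T);\H_1)$, and a fortiori in $L^2((0,T);\H)$. Since $\bm{\pi}_0$ is a bounded (orthogonal) projection on $\H$, the weak convergence $\hu_\e \rightharpoonup \bm{h}$ transfers to $(\mathbf{Id}-\bm{\pi}_0)\hu_\e \rightharpoonup (\mathbf{Id}-\bm{\pi}_0)\bm{h}$ weakly in $L^2((0,T);\H)$. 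Matching this weak limit with the strong limit $0$ obtained above, uniqueness of weak limits forces $(\mathbf{Id}-\bm{\pi}_0)\bm{h}=0$, that is $\bm{h}=\bm{\pi}_0\bm{h}$.

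Finally I would read off the announced structure. Because $\bm{h}=\bm{\pi}_0\bm{h}$ as an identity in $L^2((0,T);\H)$, for almost every $(t,x)$ the profile $\bm{h}(t,x,\cdot)$ belongs to $\operatorname{Span}\{\M,v_1\M,\dots,v_d\M,|v|^2\M\}$, hence is necessarily of the form \eqref{def:bmh} with coefficients given by the velocity moments $\int_{\R^d}\bm{h}\,\Psi_i\,\d v$ dictated by \eqref{def:pi0}, matching $\varrho$, $u$ and $\vE$ after the elementary algebra relating $\Psi_i$ in \eqref{def:Psii} to $1$, $v$ and $|v|^2-d\en_1$. The regularity $\varrho,u,\vE \in L^2((0,T);\W^{m,2}_x)$ follows from the boundedness of each moment map $g \mapsto \int_{\R^d} g\,\Psi_i\,\d v$ from $\H = \W^{m,2}_{x,v}(\M^{-1/2})$ into $\W^{m,2}_x$, which is immediate by Cauchy--Schwarz in the velocity variable since $\Psi_i\,\M^{1/2}\in L^2_v$, applied to $\bm{h}\in L^2((0,T);\H)$.

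The main obstacle is the third step, namely upgrading the merely weak control on $\hu_\e$ to the kernel structure of the limit. This passage cannot be achieved by weak convergence alone; it relies crucially on the \emph{strong} smallness of the microscopic component $(\mathbf{Id}-\bm{\pi}_0)\hu_\e$ in $L^2((0,T);\H_1)$, which is precisely where the hypocoercivity estimate \eqref{eq:hypocoerc} and the $\e^{-2}$-dissipation of $\G_{1,\e}$ enter through the computation behind Lemma~\ref{lem:hperpholder}. Once that quantitative input is granted, the remainder of the proof is soft functional analysis: weak compactness, continuity of the projection, and uniqueness of weak limits.
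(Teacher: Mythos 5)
Your proposal is correct and follows essentially the route the paper indicates: the uniform bounds \eqref{eq:estimh0}--\eqref{eq:estimh1} give the strong $L^1((0,T);\E_1)$ vanishing of $\ho_\e$ and weak $L^2((0,T);\H)$ compactness of $\hu_\e$, while the quantitative estimate $\int_0^T\|(\mathbf{Id}-\bm{\pi}_0)\hu_\e\|^2_{\H_1}\,\d\tau \lesssim \e^2$ established in the proof of Lemma~\ref{lem:hperpholder} identifies the limit as lying in $\operatorname{Ker}\mathscr{L}_1$, whence the form \eqref{def:bmh} and the stated regularity of $(\varrho,u,\vE)$ by Cauchy--Schwarz in velocity. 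Your closing remark correctly isolates the only non-soft ingredient, namely the hypocoercive $\e^{-2}$-dissipation behind the strong microscopic smallness.
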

\begin{nb}
Recall that $(\varrho,u,\theta)$ can be expressed in terms of $\bm{h}$ through the following equalities:
	\begin{equation} \label{def:rho-u-theta} 
	\varrho(t,x) = \int_{\R^d} \bm{h}(t,x,v) \, \d v, 
	\quad
	u(t,x) = \frac{1}{\vartheta_1} \int_{\R^d} \bm{h}(t,x,v) v \, \d v, 
	\quad
	\vE(t,x) =  \int_{\R^d} \bm{h}(t,x,v) \frac{|v|^2-d\vartheta_1}{\vartheta_1^2d}\, \d v. 
	\end{equation}
\end{nb}

\subsection{Limit system}
As mentioned in the introduction, the path that we use to derive the limit system follows the same lines as in the elastic case. The main idea is to write equations satisfied by averages in velocity of~$h_\e$ and to study the convergence of each term. It is worth mentioning that with the notion of weak convergence  at hand presented above, we can adopt an approach which is reminiscent of the program established in~\cite{BaGoLe1,BaGoLe2} but simpler. In particular, we can adapt some of the main ideas of~\cite{golseSR} regarding the delicate convergence of nonlinear terms. The detailed computations and arguments are included in~\cite[Section~6]{ALT}, we only mention the main steps and keypoints of the proof hereafter. 
In what follows, we will use the following notation: for~$g=g(x,v)$, $$\langle g \rangle := \int_{\R^d} g(\cdot,v) \, \d v.$$ 

\subsubsection*{\textit{\textbf{Local conservation laws}}}
We introduce 
	\begin{equation} \label{def:bmA}
	\bm{A} (v) := v \otimes v - \frac1d |v|^2 \mathbf{Id} \quad \text{and} \quad p_\e := \la \frac1d |v|^2 h_\e \ra
	\end{equation}
so that  $
	\la v \otimes v \, h_\e \ra = \la \bm{A} \, h_\e \ra + p_\e \, \mathbf{Id}.
	$
We integrate in velocity equation~\eqref{eq:BE} multiplied by $1$, $v_{i}$, $\frac{1}{2}\,|v|^{2}$, to obtain
	\begin{subequations}\label{moments}
	\begin{equation}\label{eq:mass}
	\partial_{t}\la h_{\e}\ra +\frac{1}{\e}\mathrm{div}_{x}\la v\,h_{\e}\ra=0,
	\end{equation}

	\begin{equation}\label{eq:bulk}
	\partial_{t} \la v\,h_{\e}\ra +\frac{1}{\e}
	\mathrm{Div}_{x}\la \bm{A} \, h_{\e} \ra  + \frac1\e \nabla_x p_\e =\frac{1-\re(\e)}{\e^{2}}\la v\,h_{\e}\ra,
	\end{equation}

	\begin{equation}\label{eq:energy}
	\partial_{t}\la \tfrac{1}{2}|v|^{2}h_{\e}\ra+\frac{1}{\e}\mathrm{div}_{x}\,\la \tfrac{1}{2}|v|^{2}v\,h_{\e}\ra\,=\frac{1}{\e^{3}}		\mathscr{J}_{\re(\e)}(f_{\e},f_{\e})+\frac{2(1-\re(\e))}{\e^{2}}\la \tfrac{1}{2}|v|^{2}h_{\e}\ra,
	\end{equation}
where we recall that $f_\e=G_{\re(\e)} + \e h_\e$ and where we have introduced
	$$
	\mathscr{J}_{\re}(f,f):=\int_{\R^{d}}\left[\Q_{\re}(f,f)-\Q_{\re}(G_{\re},G_{\re})\right]\,|v|^{2}\,\d v.
	$$
	\end{subequations} 
The goal is to study the convergence of each term in~\eqref{eq:mass}-\eqref{eq:bulk}-\eqref{eq:energy}. 
A first important remark to address this point is that thanks to the estimates recalled in~\eqref{eq:estimh0}-\eqref{eq:estimh1}, one can prove that for any function~$\psi = \psi(v)$ satisfying the bound~$|\psi(v)| \lesssim \langle v \rangle^q$, we have the following convergence in the distributional sense:
	\begin{equation} \label{eq:moments}
	\langle  \psi \, h_\e \rangle \xrightarrow[\e \to 0]{} \langle \psi \, \bm{h} \rangle
	\quad \text{in} \quad \mathscr{D}'_{t,x}
	\end{equation}
where $\bm{h}$ is defined in~\eqref{def:bmh} (see~\cite[Lemma~6.6]{ALT}). 

Roughly speaking, the convergence of the terms in the LHS of~\eqref{eq:mass}-\eqref{eq:bulk}-\eqref{eq:energy} is treated as in the elastic case. 
The RHS is going to be handled as a source term which takes into account the drift term and the dissipation of kinetic energy at the microscopic level. In this regard, using~\eqref{eq:moments}, we first remark that under Assumption~\ref{hyp:re},
	\begin{equation} \label{eq:convdrift}
	\frac{1-\re(\e)}{\e^{2}}\la v\,h_{\e}\ra \xrightarrow[\e \to 0]{} {\en_{1}}\lambda_0u
	\quad \text{in} \quad {\mathscr{D}'_{t,x}},
	\end{equation}
since $\lambda_0=\lim_{\e \to 0^{+}}\e^{-2}(1-\re(\e))$ and from the definition of $u$ in~\eqref{def:rho-u-theta}.
We then present a result of convergence for~$\e^{-3} \mathscr{J}_{\re(\e)}(f_\e,f_\e)$ in the following lemma in the proof of which there are not major difficulties. The said proof is thus omitted, we just mention that it is based on Assumption~\ref{hyp:re}, on the estimates on $h_\e$ coming from~\eqref{eq:estimh0}-\eqref{eq:estimh1} and involves the dissipation of energy~\eqref{eq:Dre}, we refer to~\cite[Lemma~6.9]{ALT} for more details.
\begin{lem}\label{lem:J0}
It holds that  
	$$
	\frac{1}{\e^{3}}\mathscr{J}_{\re(\e)}(f_{\e},f_{\e}) \xrightarrow[\e \to 0]{} \mathcal{J}_{0}
	\quad \text{in} \quad  \mathscr{D}'_{t,x},
	$$
where
	$$
	\mathcal{J}_{0}(t,x):=-\lambda_{0}\,\bar{c}\,\en_{1}^{\frac{3}{2}}\left(\varrho(t,x)+\frac{3}{4}\en_{1}\,\vE(t,x)\right)
	$$
for some positive constant $\bar{c}$ depending only on the angular kernel $b(\cdot)$ and $d$ and where $\lambda_0$ is defined in Assumption~\ref{hyp:re}. 
\end{lem}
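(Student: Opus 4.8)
The plan is to expand $f_\e=G_{\re(\e)}+\e\,h_\e$ inside $\mathscr{J}_{\re(\e)}$, to use the energy-loss identity to extract a factor $(1-\re^2)$ from every resulting term, and then to pass to the limit using the asymptotics of Assumption~\ref{hyp:re} together with the convergences~\eqref{eq:moments} and~\eqref{eq:GretoM}. Since $\Q_{\re}$ is bilinear,
$$\Q_{\re}(f_\e,f_\e)-\Q_{\re}(G_{\re},G_{\re})=\e\big[\Q_{\re}(G_{\re},h_\e)+\Q_{\re}(h_\e,G_{\re})\big]+\e^2\,\Q_{\re}(h_\e,h_\e),$$
so $\mathscr{J}_{\re(\e)}(f_\e,f_\e)$ splits into a part linear in $h_\e$ (of order $\e$) and a quadratic one (of order $\e^2$). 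I would then test against $|v|^2$ in the weak formulation~\eqref{co:weak}: the identity $|v'|^2+|v_\ast'|^2-|v|^2-|v_\ast|^2=-\tfrac{1-\re^2}{4}|v-v_\ast|^2(1-\sigma\cdot\bar q)$ recalled in the introduction yields, for all $g,f$,
$$\int_{\R^d}\Q_{\re}(g,f)(v)\,|v|^2\,\d v=-(1-\re^2)\,\frac{\gamma_b}{4}\int_{\R^{2d}}f(v)\,g(v_\ast)\,|v-v_\ast|^3\,\d v_\ast\,\d v,$$
which is exactly the velocity part of the dissipation functional $\mathcal{D}_{\re}$ of~\eqref{eq:Dre}.

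Consequently every piece of $\mathscr{J}_{\re(\e)}$ carries a factor $(1-\re^2)$, and after dividing by $\e^3$ one is left with
$$\frac{1}{\e^3}\,\mathscr{J}_{\re(\e)}(f_\e,f_\e)=-\frac{1-\re(\e)^2}{\e^2}\,\mathsf{C}_\e-\frac{1-\re(\e)^2}{\e}\,\mathsf{Q}_\e,$$
where $\mathsf{C}_\e:=\frac{\gamma_b}{2}\int_{\R^{2d}}h_\e(v)\,G_{\re(\e)}(v_\ast)|v-v_\ast|^3\,\d v\,\d v_\ast$ and $\mathsf{Q}_\e:=\frac{\gamma_b}{4}\int_{\R^{2d}}h_\e(v)\,h_\e(v_\ast)|v-v_\ast|^3\,\d v\,\d v_\ast$ (the factor $2$ in $\mathsf{C}_\e$ coming from the symmetrization of the two cross terms). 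Next I would treat the two contributions separately, using that under Assumption~\ref{hyp:re} one has $\tfrac{1-\re(\e)^2}{\e^2}=(\lambda_0+\eta(\e))(1+\re(\e))\to 2\lambda_0$ while $\tfrac{1-\re(\e)^2}{\e}\to 0$. The quadratic contribution $\mathsf{Q}_\e$ is bounded uniformly in $\e$ because $\|h_\e\|_{\E}$ controls the $L^1_v(\langle v\rangle^3)$--norm (recall $q\geq 5$) and $|v-v_\ast|^3\lesssim\langle v\rangle^3\langle v_\ast\rangle^3$; hence $\tfrac{1-\re^2}{\e}\mathsf{Q}_\e\to 0$.

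For the linear contribution I would set $\psi_\e(v):=\frac{\gamma_b}{2}\int_{\R^d}G_{\re(\e)}(v_\ast)|v-v_\ast|^3\,\d v_\ast$, observe that $|\psi_\e(v)|\lesssim\langle v\rangle^3$, so that $\mathsf{C}_\e=\langle\psi_\e\,h_\e\rangle$, and decompose $\mathsf{C}_\e=\langle\psi_0\,h_\e\rangle+\langle(\psi_\e-\psi_0)\,h_\e\rangle$ with $\psi_0(v):=\frac{\gamma_b}{2}\int_{\R^d}\M(v_\ast)|v-v_\ast|^3\,\d v_\ast$. The first bracket converges in $\mathscr{D}'_{t,x}$ to $\langle\psi_0\,\bm{h}\rangle$ by~\eqref{eq:moments}, while $\psi_\e-\psi_0$ is of size $O(1-\re(\e))=O(\e^2)$ by~\eqref{eq:GretoM}, so the second bracket is negligible against the bounded prefactor $\tfrac{1-\re^2}{\e^2}$. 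This is the step I expect to be the main (if mild) obstacle: one must reconcile the merely \emph{weak} convergence of $h_\e$ with the $\e$--dependence of the effective test function $\psi_\e$, which is precisely why an $L^1$--control of $G_{\re(\e)}-\M$ with sufficient velocity weight is needed. Combining these facts gives $\tfrac1{\e^3}\mathscr{J}_{\re(\e)}(f_\e,f_\e)\to -\lambda_0\,\gamma_b\int_{\R^{2d}}\bm{h}(v)\,\M(v_\ast)|v-v_\ast|^3\,\d v\,\d v_\ast$ in $\mathscr{D}'_{t,x}$.

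Finally I would identify this limit with $\mathcal{J}_0$ by an explicit Gaussian computation. Inserting the form~\eqref{def:bmh} of $\bm{h}$, the bulk-velocity term $u\cdot v$ vanishes by parity, since $\M(v)\M(v_\ast)|v-v_\ast|^3$ is even under $(v,v_\ast)\mapsto(-v,-v_\ast)$ while $v\mapsto u\cdot v$ is odd, leaving only the $\varrho$ and $\vE$ contributions. The rescaling $v=\sqrt{\en_1}\,w$, $v_\ast=\sqrt{\en_1}\,w_\ast$ turns both remaining integrals into dimensionless Gaussian moments times powers of $\en_1$ (a global $\en_1^{3/2}$ from $|v-v_\ast|^3$, and an extra $\en_1$ from $|v|^2-d\en_1=\en_1(|w|^2-d)$ in the $\vE$ term); computing the ratio of the two numerical constants produces exactly the coefficient $\tfrac34\en_1$ in front of $\vE$. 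Collecting all numerical factors into a single constant $\bar{c}>0$ depending only on the angular kernel $b(\cdot)$ (through $\gamma_b$) and on $d$ then yields $\mathcal{J}_0=-\lambda_0\,\bar{c}\,\en_1^{3/2}\big(\varrho+\tfrac34\en_1\,\vE\big)$, as claimed.
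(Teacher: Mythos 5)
Your proposal is correct and follows essentially the same route as the paper's (here omitted) proof, which the text describes as resting on exactly your ingredients: the energy--dissipation identity behind~\eqref{eq:Dre} to extract the factor $(1-\re^2)$ from every term, the scaling $\frac{1-\re(\e)^2}{\e^2}\to 2\lambda_0$ from Assumption~\ref{hyp:re}, the uniform bounds~\eqref{eq:estimh0}--\eqref{eq:estimh1} on $h_\e$, and the moment convergence~\eqref{eq:moments}; your Gaussian computation of the ratio of moments (using $\mathbb{E}|z|^{5}=(d+3)\,\mathbb{E}|z|^{3}$) does yield exactly the coefficient $\frac34\en_1$. One pedantic point: \eqref{eq:GretoM} controls $G_{\re(\e)}-\M$ only in $L^1_v(\langle v\rangle^{2})$, not with the weight $\langle v\rangle^{3}$ your term $\langle(\psi_\e-\psi_0)h_\e\rangle$ requires, but since the prefactor $\frac{1-\re^2}{\e^2}$ is merely bounded, the qualitative vanishing $\|G_{\re(\e)}-\M\|_{L^1_v(\langle v\rangle^{3})}\to0$ suffices and follows by interpolating \eqref{eq:GretoM} with the uniform higher moment bounds on $G_\re$ (the sharper weighted estimates are in \cite[Lemma~2.3]{ALT}).
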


\subsubsection*{\textbf{Incompressibility condition and Boussinesq relation}}
Using~\eqref{eq:moments} in the equations~\eqref{eq:mass}-\eqref{eq:bulk}, using also that the restitution coefficient satisfies Assumption~\ref{hyp:re}, we can easily obtain the incompressibility condition as well as the Boussinesq relation:
	\begin{equation} \label{eq:incomp+Boussinesq}
	\operatorname{div}_x u = 0 \quad \text{and} \quad \nabla_x (\varrho + \vartheta_1 \theta) = 0
	\end{equation}
where we recall that $\varrho$, $u$ and $\theta$ are defined in~\eqref{def:rho-u-theta}. Using furthermore that the global mass of~$h_\e$ vanishes (see~\eqref{eq:conservh}), we have that
	$$
	0=\int_{\T^d \times \R^d}h_\e(t,x,v) \, \d v \, \d x \xrightarrow[\e \to 0]{} \int_{\T^d} \varrho(t,x) \, \d x 
	\quad \text{in} \quad \mathscr{D}'_t
	$$
and thus that $\int_{\T^d} \varrho(t,x) \, \d x =0$. It implies that we have the following strengthened Boussinesq relation: for almost every $(t,x) \in (0,T) \times \T^d$, 
	\begin{equation} \label{eq:strongBoussinesq}
	\varrho + \vartheta_1(\theta - E) = 0 
	\quad \text{with} \quad E=E(t) := \int_{\T^d} \theta(t,x) \, \d x.
	\end{equation}
\begin{nb}
Notice here that the derivation of the strong Boussinesq relation $\varrho + \vartheta_1 \theta=0$ is not as straightforward as in the elastic case. In the elastic case, the classical Boussinesq relation $\nabla_x (\varrho + \vartheta_1 \theta) = 0$ straightforwardly implies the strong form of Boussinesq because the two functions~$\varrho$ and~$\vE$ have zero spatial averages. This cannot be deduced directly in the granular context due to the dissipation of energy and we will see later on how to obtain it (see Proposition~\ref{prop:limit2}). 
\end{nb}

\subsubsection*{\textbf{\textit{Equations of motion and temperature}}}
In order to identify the equations satisfied by~$u$ and~$\theta$, as in the elastic case, we start by studying the convergence of quantities that are related to 
\begin{multline} \label{def:rhoeps-ueps-thetaeps} 
	\varrho_\e(t,x) := \int_{\R^d} h_\e(t,x,v) \, \d v, 
	\quad
	u_\e(t,x) := \frac{1}{\vartheta_1} \int_{\R^d} h_\e(t,x,v) v \, \d v, 
	\quad
	\vE_\e(t,x) :=  \int_{\R^d} h_\e(t,x,v) \frac{|v|^2-d\vartheta_1}{\vartheta_1^2d}\, \d v. 
	\end{multline}
More precisely, we inverstigate the convergence of 
	$$
	\bm{u}_\e := \exp\left(-t \frac{1-\re(\e)}{\e^2}\right) \mathcal{P} u_\e 
	\quad \text{and} \quad
	\bm{\theta}_\e :=  \la \tfrac12 (|v|^2-(d+2)\vartheta_1) h_\e \ra
	$$
where $\mathcal{P}$ is the Leray projection on divergence-free vector fields. 
Notice that if we compare our approach to the elastic case, we have added the exponential term in the definition of~$\bm{u}_\e$ in order to absorbe the term in the RHS in~\eqref{eq:bulk}. We compute the evolution of~$\bm{u}_\e$ and~$\bm{\theta}_\e$ (by applying the Leray projector~$\mathcal{P}$ to~\eqref{eq:bulk} and by making an appropriate linear combination of~\eqref{eq:mass} and~\eqref{eq:energy}) and obtain:
	\begin{equation} \label{eq:bulk2}
	\partial_{t} \bm{u}_{\e}
	=-\exp\left(-t\frac{1-\re(\e)}{\e^{2}}\right) 
	\mathcal{P}\left(\en_{1}^{-1}\mathrm{Div}_{x}\la \tfrac{1}{\e}\bm{A}\,h_{\e} \ra\right)
	\end{equation}
where $\bm{A}$ is defined in~\eqref{def:bmA} and
	\begin{multline} \label{eq:energy2}
	\partial_{t}\bm{\theta}_{\e}+\frac{1}{\e}\mathrm{div}_{x}\la \bm{b}\,h_{\e}\ra
	=\frac{1}{\e^{3}}\mathscr{J}_{\re(\e)}(f_{\e},f_{\e})+\frac{2(1-\re(\e))}{\e^{2}}\la \tfrac{1}{2}|v|^{2}h_{\e}\ra \\
	\quad \text{with} \quad 
	\bm{b}(v):=\frac{1}{2}\left(|v|^{2}-(d+2)\en_{1}\right).
	\end{multline}
The study of the limit~$\e \to 0$ in those equations is more favorable because compared to~\eqref{eq:mass}-\eqref{eq:bulk}-\eqref{eq:energy}, the gradient term in~\eqref{eq:bulk} has been eliminated thanks to the Leray projector and also because $\bm{A}$ and~$\bm{b}$ belong to the range of~$\mathbf{Id} - \bm{\pi}_0$ so that thanks to Lemma~\ref{lem:hperpholder}, we know that the quantities $\e^{-1} \mathrm{Div}_{x}\la \bm{A}\,h_{\e} \ra$ and~$\e^{-1}\mathrm{div}_{x}\la \bm{b}\,h_{\e}\ra$ are bounded in~$\W^{m-1,2}_x$. Then, applying a precised version of Aubin-Lions lemma ~\cite[Corollary 4]{simon}, we are able to prove that up to the extraction of a subsequence,~$\{\bm{u}_\e\}_\e$ and~$\{\bm{\theta}_\e\}_\e$ converge strongly in~$L^1 \left((0,T) \,;\, \W^{m-1,2}_x\right)$ respectively towards 
	\begin{equation} \label{def:bmtheta0}
	\mathcal{P} u = u 
	\quad \text{and} \quad
	\bm{\theta}_0 := \la \tfrac12 (|v|^2-(d+2)\vartheta_1) \bm{h} \ra = \frac{d \vartheta_1^2}{2} E - \frac{d+2}{2} \vartheta_1 \varrho
	\end{equation}
where we used the incompressibility condition and the strong Boussinesq relation given in~\eqref{eq:incomp+Boussinesq}-\eqref{eq:strongBoussinesq}. We refer to~\cite[Lemma~6.10]{ALT} for more details. 

\subsubsection*{\textit{\textbf{About initial data}}} Recall that, in Theorem \ref{theo:conv}, the convergence of~${h}_{\e}$ to~$\bm{h}$ given by~\eqref{def:bmh} is known to hold only for a subsequence and, in particular, at initial time, different subsequences could converge towards different initial datum and therefore $(\varrho,u,\vE)$ could be different solutions to the same system. In Theorem~\ref{theo:hydro}, the initial datum is prescribed by ensuring the convergence of $\bm{\pi}_0h_{\mathrm{in}}^{\e}$ towards a \emph{single} possible limit where $\bm{\pi}_0$ is defined in~\eqref{def:pi0} (recall that the initial data for~$(\varrho,u,\theta)$ is defined in~\eqref{eq:DI}). 

Using Lemma~\ref{lem:hperpholder}, one can apply Arzel\`a-Ascoli theorem to get that $\mathcal{P} u_\e$ and $\bm{\theta}_\e$ converge strongly in $\mathcal{C}\big([0,T] \, ; \, \W^{m-1,2}_x\big)$ towards respectively $u$ and $\bm{\theta}_0$ defined in~\eqref{def:bmtheta0} that also belong to $\mathcal{C}\big([0,T] \, ; \, \W^{m-1,2}_x\big)$. We refer to~\cite[Proposition~6.19]{ALT} for more details.

\subsubsection*{\textit{\textbf{Limit equations}}}
To get the limit equations, we need to study the convergence of the terms $\e^{-1} \mathcal{P} \mathrm{Div}_{x}\la \bm{A}\,h_{\e} \ra$ and $\e^{-1}\mathrm{div}_{x}\la \bm{b}\,h_{\e}\ra$ in~\eqref{eq:bulk2} and~\eqref{eq:energy2}. To this end, our approach relies on arguments coming from~\cite{golseSR} (in particular, the tricky convergence of the nonlinear terms is treated thanks to a compensated compactness argument coming from~\cite{lions-masm}), the main difference being that we force the elastic collision operator to appear in our computations, we thus introduce terms that involve differences between the elastic and the inelastic collision operators. Those remainder terms vanish in the limit $\e \to 0$ thanks to Assumption~\ref{hyp:re}. We refer to~\cite[Lemmas~6.12-6.13-6.14]{ALT} for more details. 
In the end, writing $\mathcal{P}\mathrm{Div}_{x}(u \otimes u)=\mathrm{Div}_{x}(u \otimes u) + \en_1^{-1}\nabla_{x} p$ (see \cite[Proposition 1.6]{majda}), we obtain the following result:
\begin{prop} \label{prop:limit1}
There are some constants $\nu>0$ and $\gamma>0$ such that the limit velocity~$u=u(t,x)$ in~\eqref{def:bmh} satisfies
	\begin{equation} \label{eq:bulk3}
	\partial_{t}u-\frac{\nu}{\en_1}\,\Delta_{x}u 
	+ \en_{1} \mathrm{Div}_{x} \left(u\otimes u\right)+\nabla_{x}p=\lambda_0u
	\end{equation}
where $\lambda_0$ is defined in Assumption~\ref{hyp:re}, while the limit temperature $\vE=\vE(t,x)$ in~\eqref{def:bmh} satisfies 
	\begin{equation} \label{eq:temperature}
	\partial_{t}\vE 
	-\frac{\gamma}{\en_{1}^{2}}\,\Delta_{x}\vE + \en_{1}\,u\cdot \nabla_{x}\vE
	=\frac{2}{(d+2)\en_{1}^{2}}\mathcal{J}_{0}+\frac{2d\lambda_0}{d+2}E+\frac{2}{d+2}\frac{\d}{\d t}E,
	\end{equation}
where we recall that $\mathcal{J}_0$ is defined in Lemma~\ref{lem:J0} and $E$ is defined in~\eqref{eq:strongBoussinesq}. 
\end{prop}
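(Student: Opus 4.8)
The plan is to pass to the limit $\e\to0$ in the evolution equations \eqref{eq:bulk2} and \eqref{eq:energy2} for $\bm{u}_\e$ and $\bm{\theta}_\e$. Every term except the nonlinear flux terms $\e^{-1}\mathcal{P}\,\mathrm{Div}_x\la\bm{A}\,h_\e\ra$ and $\e^{-1}\mathrm{div}_x\la\bm{b}\,h_\e\ra$ is already under control: the left-hand time derivatives converge through the strong limits $u$ and $\bm{\theta}_0$ identified in \eqref{def:bmtheta0}, while the right-hand sources are handled by the drift limit \eqref{eq:convdrift} and by Lemma~\ref{lem:J0}. Since $\bm{A}\M$ and $\bm{b}\M$ lie in the range of $\mathscr{L}_1$, I would introduce the associated Burnett functions, that is the unique solutions $\widehat{\bm{A}},\widehat{\bm{b}}$ in $(\operatorname{Ker}\mathscr{L}_1)^\perp$ of $\mathscr{L}_1\widehat{\bm{A}}=\bm{A}\M$ and $\mathscr{L}_1\widehat{\bm{b}}=\bm{b}\M$. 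Exploiting the self-adjointness of $\mathscr{L}_1$ on $L^2_v(\M^{-1/2})$, the fluxes rewrite as pairings of $\mathscr{L}_1 h_\e$ against the Burnett functions, for instance $\la\bm{A}\,h_\e\ra=\int_{\R^d}\widehat{\bm{A}}\,(\mathscr{L}_1 h_\e)\,\M^{-1}\,\d v$.

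The next step is to substitute $\mathscr{L}_1 h_\e$ using the kinetic equation \eqref{eq:BE}, forcing the elastic operator to appear. Writing $\LL=\mathscr{L}_1+(\LL-\mathscr{L}_1)$ and $\Q_{\re(\e)}=\Q_1+(\Q_{\re(\e)}-\Q_1)$, equation \eqref{eq:BE} gives
$$\tfrac1\e\,\mathscr{L}_1 h_\e=\e\,\partial_t h_\e+v\cdot\nabla_x h_\e-\Q_1(h_\e,h_\e)+R_\e,$$
where $R_\e:=-(\Q_{\re(\e)}-\Q_1)(h_\e,h_\e)-\tfrac1\e(\LL-\mathscr{L}_1)h_\e$ collects the inelastic corrections. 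By the bilinear estimate \eqref{eq:bilinear}, Assumption~\ref{hyp:re} (which ensures $(1-\re(\e))/\e^2\to\lambda_0$) and the uniform bounds \eqref{eq:estimh0}--\eqref{eq:estimh1}, the remainder $R_\e$ is $\operatorname{O}(\e)$ and vanishes in the weak topology as $\e\to0$. Inserting this identity into the rewritten fluxes splits each of them into a linear part (coming from $v\cdot\nabla_x h_\e$), a quadratic part (coming from $\Q_1(h_\e,h_\e)$), and negligible $\e\partial_t$ and $R_\e$ contributions. Using the moment convergence \eqref{eq:moments} and the fact that the microscopic part of $h_\e$ is $\operatorname{O}(\e)$ by Lemma~\ref{lem:hperpholder}, the linear part converges to the diffusion terms $-\nu\,\Delta_x u$ and $-\gamma\,\Delta_x\vE$, with $\nu,\gamma>0$ given by the standard integrals of $\widehat{\bm{A}}$ against $\bm{A}$ and of $\widehat{\bm{b}}$ against $\bm{b}$.

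The quadratic part is the main obstacle. Because Theorem~\ref{theo:conv} only yields weak $L^2$ convergence of $\hu_\e$ towards $\bm{h}$, the products inside $\Q_1(h_\e,h_\e)$ do not pass to the limit by weak convergence alone. Here I would follow the strategy of \cite{golseSR,BaGoLe1,BaGoLe2}: replacing $h_\e$ by its hydrodynamic projection $\bm{\pi}_0 h_\e$ (the microscopic remainder being negligible by Lemma~\ref{lem:hperpholder}), the quadratic term becomes a quadratic expression in $(\varrho_\e,u_\e,\vE_\e)$, whose oscillating acoustic modes are controlled by the compensated-compactness argument of \cite{lions-masm}, while the Leray projection $\mathcal{P}$ in \eqref{eq:bulk2} absorbs the gradient contributions into the pressure through $\mathcal{P}\mathrm{Div}_x(u\otimes u)=\mathrm{Div}_x(u\otimes u)+\en_1^{-1}\nabla_x p$. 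This produces the convection terms $\en_1\mathrm{Div}_x(u\otimes u)$ in \eqref{eq:bulk3} and $\en_1\,u\cdot\nabla_x\vE$ in \eqref{eq:temperature}.

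Finally, I would assemble the limit equations. In the momentum equation the prefactor $\exp(-t(1-\re(\e))/\e^2)$ in the definition of $\bm{u}_\e$ converges to $e^{-\lambda_0 t}$; differentiating it through the product rule moves the drift into the source term $\lambda_0 u$ on the right-hand side of \eqref{eq:bulk3}. In the temperature equation, the source $\e^{-3}\mathscr{J}_{\re(\e)}(f_\e,f_\e)+2\e^{-2}(1-\re(\e))\la\tfrac12|v|^2 h_\e\ra$ converges, by Lemma~\ref{lem:J0} and \eqref{eq:moments}, to $\tfrac{2}{(d+2)\en_1^2}\mathcal{J}_0+\tfrac{2d\lambda_0}{d+2}E$, while rewriting $\bm{\theta}_0$ in terms of $\vE$ via the strong Boussinesq relation \eqref{eq:strongBoussinesq} generates the extra term $\tfrac{2}{d+2}\tfrac{\d}{\d t}E$. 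Passing to the limit in the weak formulations of \eqref{eq:bulk2} and \eqref{eq:energy2} then delivers \eqref{eq:bulk3} and \eqref{eq:temperature}. The two points demanding the most care are the convergence of the nonlinear flux, which rests on compensated compactness, and the vanishing of the inelastic remainder $R_\e$, which is exactly where the scaling of Assumption~\ref{hyp:re} is used.
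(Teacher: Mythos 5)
Your proposal follows essentially the same route as the paper: passing to the limit in \eqref{eq:bulk2}--\eqref{eq:energy2}, forcing the elastic operators $\mathscr{L}_1$ and $\Q_1$ to appear so that the inelastic remainders vanish under the scaling of Assumption~\ref{hyp:re}, treating the nonlinear flux by the compensated-compactness argument of \cite{lions-masm} in the spirit of \cite{golseSR,BaGoLe1,BaGoLe2}, and using $\mathcal{P}\mathrm{Div}_{x}(u \otimes u)=\mathrm{Div}_{x}(u \otimes u) + \en_1^{-1}\nabla_{x} p$ to absorb the pressure, so the argument matches the one the paper sketches (with details deferred to \cite[Lemmas~6.12--6.14]{ALT}). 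One small imprecision worth noting: your bound $R_\e=\operatorname{O}(\e)$ invokes \eqref{eq:bilinear}, which requires $\E_2$-control of both arguments, and among the bounds \eqref{eq:estimh0}--\eqref{eq:estimh1} only $\hu_\e$ is controlled in $\E_2$ (via $\H \hookrightarrow \E_2$), so strictly you must split $h_\e=\ho_\e+\hu_\e$ and use the strong $L^1((0,T);\E_1)$-vanishing of $\ho_\e$ to dispose of the terms involving $\ho_\e$ — which is precisely how the paper's splitting handles it.
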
 
\begin{nb}
The viscosity and heat conductivity coefficients~$\nu$ and~$\gamma$ are explicit and fully determined by the elastic linearized collision operator~$\mathscr{L}_1$ (see~\cite[Lemma~C.1]{ALT}).  
Notice also that, due to~\eqref{eq:incomp+Boussinesq}, $\mathrm{Div}_{x}(u\otimes u)=\left(u \cdot \nabla_{x}\right)u$ and \eqref{eq:bulk3} is nothing but a {\emph{reinforced}} Navier-Stokes equation associated to a divergence-free source term given by $\lambda_0u$ which  can be interpreted as  an energy supply/self-consistent force acting on the hydrodynamical system because of the self-similar rescaling.
\end{nb}

To end the identification of the limit equations, we go back to the strong Boussinesq equation~\eqref{eq:strongBoussinesq} and prove the following result:
\begin{prop}\label{prop:limit2} It holds that
	$$
	E(t)=0, \quad t \in [0,T],
	$$
where $E=E(t)$ is defined in~\eqref{eq:strongBoussinesq}. Consequently, the limiting temperature $\vE(t,x)$ in \eqref{def:bmh} satisfies
	\begin{equation}\label{eq:temperature2}
	\partial_{t}\,\vE-\frac{\gamma}{\en_{1}^{2}}\,\Delta_{x}\vE + \en_{1}\,u\cdot \nabla_{x}\vE
	=\frac{\lambda_{0}\,\bar{c}}{2(d+2)}\sqrt{\en_{1}}\,\vE. 
	\end{equation}
where $\gamma$ is defined in Proposition~\ref{prop:limit1},~$\lambda_0$ in Assumption~\ref{hyp:re} and $\bar c$ in Lemma~\ref{lem:J0}. Moreover, the strong Boussinesq relation holds true:
	\begin{equation}\label{eq:strongBoussinesq2}
	\varrho+\en_{1}\vE=0 \quad \text{on} \quad [0,T] \times \T^d. 
	\end{equation}
\end{prop}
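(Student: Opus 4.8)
The plan is to show that the spatial average $E(t)$ satisfies a closed \emph{linear homogeneous} ODE in time whose initial value vanishes; then $E\equiv 0$, and substituting this back into~\eqref{eq:temperature} yields both~\eqref{eq:temperature2} and~\eqref{eq:strongBoussinesq2}. First I would integrate the temperature equation~\eqref{eq:temperature} over $\T^d$. The diffusion term drops by periodicity, and the convective term drops because $\en_1\,u\cdot\nabla_x\vE=\en_1\operatorname{div}_x(\vE\,u)$ thanks to the incompressibility condition in~\eqref{eq:incomp+Boussinesq}. Using $\int_{\T^d}\vE\,\d x=E$ together with the fact, established just before~\eqref{eq:strongBoussinesq}, that $\int_{\T^d}\varrho\,\d x=0$, the formula for $\mathcal{J}_0$ in Lemma~\ref{lem:J0} integrates to $\int_{\T^d}\mathcal{J}_0\,\d x=-\tfrac34\lambda_0\bar c\,\en_1^{5/2}E$. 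Collecting terms (and using that $E$ is $x$-independent, so that the $E$-terms on the right integrate to $E$ and $\tfrac{\d}{\d t}E$ up to the normalised volume of $\T^d$), one is left with a relation of the form
\begin{equation*}
\frac{\d}{\d t}E=\lambda_0\left(2-\frac{3\bar c\sqrt{\en_1}}{2d}\right)E=:\Lambda E,
\end{equation*}
whose precise constant $\Lambda$ is immaterial: what matters is that it is linear and homogeneous, so that $E(t)=E(0)\,e^{\Lambda t}$ and it remains only to prove $E(0)=0$.

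The delicate point — and the main obstacle — is the computation of $E(0)$, because of the initial layer absorbed in the weak convergence (Remark~\ref{nb:WP}): the trace at $t=0$ of the limit temperature is $\vE_{\mathrm{in}}$ from~\eqref{eq:DI}, \emph{not} $\theta_0$. Since the limit solution lies in $\mathcal{C}([0,T];\mathscr{W}_{m-1})$, continuity gives
\begin{equation*}
E(0)=\int_{\T^d}\vE_{\mathrm{in}}\,\d x=\frac{d}{d+2}\int_{\T^d}\theta_0\,\d x-\frac{2}{(d+2)\en_1}\int_{\T^d}\varrho_0\,\d x,
\end{equation*}
so I must show that both spatial averages on the right vanish. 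For this I use that the projection $\bm{\pi}_0$ defined in~\eqref{def:pi0} preserves the mass moment and the renormalised energy moment, whence $\int_{\T^d}\varrho_0\,\d x=\lim_\e\int_{\T^d\times\R^d}h^{\mathrm{in}}_\e\,\d v\,\d x$ and $\int_{\T^d}\theta_0\,\d x=\lim_\e\tfrac{1}{\en_1^2 d}\int_{\T^d\times\R^d}h^{\mathrm{in}}_\e\,(|v|^2-d\en_1)\,\d v\,\d x$, the passages to the limit being licit under the convergence of $\bm{\pi}_0 h^\e_{\mathrm{in}}$ assumed in Theorem~\ref{theo:hydro} (here $q\geq5$ covers the $|v|^2$ moment). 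The first limit is $0$ by the mass constraint~\eqref{eq:conservh}, and the second reduces, using vanishing mass once more, to $\lim_\e\tfrac{1}{\en_1^2 d}\int_{\T^d\times\R^d}h^{\mathrm{in}}_\e|v|^2\,\d v\,\d x$, which is $0$ by~\eqref{eq:initialenergy}. Therefore $E(0)=0$, and hence $E\equiv0$ on $[0,T]$.

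With $E\equiv0$ the conclusion is immediate. The strong Boussinesq relation~\eqref{eq:strongBoussinesq} collapses to $\varrho+\en_1\vE=0$, which is exactly~\eqref{eq:strongBoussinesq2}. Substituting $\varrho=-\en_1\vE$ into the expression for $\mathcal{J}_0$ gives $\varrho+\tfrac34\en_1\vE=-\tfrac14\en_1\vE$, so that $\mathcal{J}_0=\tfrac14\lambda_0\bar c\,\en_1^{5/2}\vE$; inserting this and $E=0$ into the pointwise equation~\eqref{eq:temperature} turns its right-hand side into $\tfrac{2}{(d+2)\en_1^2}\mathcal{J}_0=\tfrac{\lambda_0\bar c}{2(d+2)}\sqrt{\en_1}\,\vE$, which is precisely~\eqref{eq:temperature2}. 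I expect the genuinely subtle part to be the second step: one must keep track of the fact that $E$ is a functional of the \emph{limit} temperature (with initial trace $\vE_{\mathrm{in}}$), and then transfer the vanishing of $\int\vE_{\mathrm{in}}$ back to the kinetic level through the moment-preservation of $\bm{\pi}_0$ and the hypotheses~\eqref{eq:conservh} and~\eqref{eq:initialenergy}.
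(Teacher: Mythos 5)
Your proposal is correct and follows essentially the same route as the paper's proof: averaging~\eqref{eq:temperature} over $\T^d$ (periodicity kills the diffusion, incompressibility kills the convection, and $\int_{\T^d}\varrho\,\d x=0$ reduces Lemma~\ref{lem:J0} to a multiple of $E$) yields a linear homogeneous ODE for $E$; then $E(0)=0$ is obtained from the initial-trace identification at $t=0$ combined with~\eqref{eq:conservh} and~\eqref{eq:initialenergy}; and substituting $E\equiv 0$ back gives~\eqref{eq:temperature2} and~\eqref{eq:strongBoussinesq2}. The only cosmetic difference is that you show $\int_{\T^d}\varrho_0\,\d x=\int_{\T^d}\theta_0\,\d x=0$ directly at the kinetic level via the moment preservation of $\bm{\pi}_0$, whereas the paper shows $\int_{\T^d}\bm{\theta}_0(0,x)\,\d x=0$ (the trace attained continuously by $\bm{\theta}_\e$) and deduces $\int_{\T^d}\varrho(0,x)\,\d x=0$ by comparing two linear expressions for $E(0)$ coming from~\eqref{eq:DI} and~\eqref{def:bmtheta0} --- an equivalent manipulation of the same inputs.
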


\begin{proof}
Using Lemma~\ref{lem:J0} and averaging in position the equation~\eqref{eq:temperature}, it is easy to prove that 
	$$
	\frac{\d}{\d t} E(t) = \bar c_0 \, E(t)
	$$
for some some constant $\bar c_0 \in \R$. 
Moreover, on the one hand, from~\eqref{eq:DI}, we have
	\begin{equation} \label{eq:E(0)1}
	E(0) =  \int_{\T^d} \theta (0,x) \, \d x = - \frac{1}{\vartheta_1} \int_{\T^d} \varrho(0,x) \, \d x.
	\end{equation}
On the other hand, from the definition of $\bm{\theta}_0$ in~\eqref{def:bmtheta0}, we also have
	\begin{equation} \label{eq:E(0)2}
	E(0) = \frac{2}{\vartheta_1^2 d}  \int_{\T^d} \bm{\theta}_0(0,x) \, \d x 
	+ \frac{2}{\vartheta_1 d}  \int_{\T^d} \varrho(0,x) \, \d x.
	\end{equation}
 We also know that $\bm{\theta}_\e$ converges towards $\bm{\theta}_0$ in $\mathcal{C}\big([0,T] \, ; \, \W^{m-1,2}_x\big)$. Consequently, we deduce that 
 	\begin{equation*}
	\int_{\T^d} \bm{\theta}_0(0,x) \, \d x = \lim_{\e \to 0} \int_{\T^d} \la \tfrac{|v|^2-(d+2) \vartheta_1}{2} h_\e(0,x) \ra\, \d x 
	= \lim_{\e \to 0}  \int_{\T^d} \la \tfrac12 |v|^2 h_\e(0,x) \ra \, \d x
	\end{equation*}
where we used~\eqref{eq:conservh} to get the last equality. 
{From~\eqref{eq:initialenergy}, we deduce that}
	\begin{equation*}
	\int_{\T^d} \bm{\theta}_0(0,x) \, \d x=0.
	\end{equation*}
Coming back to~\eqref{eq:E(0)1}-\eqref{eq:E(0)2}, we deduce that 
	$$
	E(0) = - \frac{1}{\vartheta_1} \int_{\T^d} \varrho(0,x) \, \d x = \frac{2}{\vartheta_1 d}  \int_{\T^d} \varrho(0,x) \, \d x
	$$
which implies that $E(0)=0$. This concludes the proof. 
\color{black}
\end{proof}
Gathering the results we obtained in Propositions~\ref{prop:limit1} and~\ref{prop:limit2}, we are able to end the proof of Theorem~\ref{theo:hydro}. 
 
\end{document}